\newtheorem{theorem}{Theorem}[section]
\newtheorem{lemma}[theorem]{Lemma}
\newtheorem{corollary}[theorem]{Corollary}
\newtheorem{proposition}[theorem]{Proposition}
\newtheorem*{problem}{Open problem}
\theoremstyle{definition}
\newtheorem{definition}[theorem]{Definition}
\begin{document}

\title{$1$-perfectly orientable $K_4$-minor-free and outerplanar graphs}

\author{
Bo\v{s}tjan Bre\v{s}ar$^{a,b}$\thanks{Email: \texttt{bostjan.bresar@um.si}}
\and
Tatiana Romina Hartinger$^{c,d}$\thanks{Email: \texttt{tatiana.hartinger@iam.upr.si}}
\and
Tim Kos$^{b}$\thanks{Email: \texttt{tim.kos@imfm.si}}
\and
Martin Milani\v c$^{c,d}$\thanks{Email: \texttt{martin.milanic@upr.si}}
}

\date{\today}

\maketitle

\begin{center}
$^a$ Faculty of Natural Sciences and Mathematics, University of Maribor, Slovenia

$^b$ Institute of Mathematics, Physics and Mechanics, Ljubljana, Slovenia

$^c$ University of Primorska, UP IAM, Muzejski trg 2, SI 6000 Koper, Slovenia

$^d$ University of Primorska, UP FAMNIT, Glagolja\v ska 8, SI 6000 Koper, Slovenia
\end{center}

\begin{abstract}
A graph $G$ is said to be $1$-perfectly orientable if it has an orientation such that for every vertex $v\in V(G)$, the out-neighborhood of $v$ in $D$ is a clique in $G$. In $1982$, Skrien posed the problem of characterizing the class of $1$-perfectly orientable graphs.
This graph class forms a common generalization of the classes of chordal and circular arc graphs; however, while polynomially recognizable via a reduction to $2$-SAT, no structural characterization of this intriguing class of graphs is known.
Based on a reduction of the study of $1$-perfectly orientable graphs to the biconnected case, we characterize, both in terms of forbidden induced minors and in terms of composition theorems, the classes of \hbox{$1$-perfectly} orientable $K_4$-minor-free graphs and of
\hbox{$1$-perfectly} orientable outerplanar graphs. As part of our approach, we introduce a class of graphs defined similarly as the class of $2$-trees and relate the classes of graphs under consideration to two other graph classes closed under induced minors studied in the literature: cyclically orientable graphs and graphs of separability at most~$2$.
\end{abstract}

%------------------------------------------------------------------------------------------------------------------------------------

\section{Introduction}

We consider the graphs having an orientation that is an {\it out-tournament}, that is, a digraph in which the out-neighborhood of every vertex induces an orientation of a (possibly empty) complete graph. Following the terminology of Kammer and Tholey~\cite{MR3152051}, we say that an
orientation of a graph is {\it $1$-perfect} if the out-neighborhood of every vertex induces a tournament, and that a graph is \hbox{{\it $1$-perfectly orientable}} if it has a $1$-perfect orientation. The notion of $1$-perfectly orientable graphs was introduced by Skrien in 1982~\cite{MR666799}. He referred to them under the name $\{B_2\}$-graphs and posed the problem of characterizing this graph class. It is known that $1$-perfectly orientable graphs can be recognized in polynomial time via a reduction to $2$-SAT~\cite[Theorem 5.1]{MR1244934}. A polynomial time algorithm for recognizing $1$-perfectly orientable graphs that works directly on the graph was given by Urrutia and Gavril~\cite{MR1161986}. An arc reversal argument shows that $1$-perfectly orientable graphs are exactly the graphs that admit an orientation that is an in-tournament; such orientations were called {\it fraternal orientations} in several papers~\cite{MR1161986,MR1287025,MR1292980,MR1449722,MR1246675,MR2323998, MR2548660}.

While a structural understanding of $1$-perfectly orientable graphs is still an open question, partial results are known.
Bang-Jensen et al.~\cite{MR1244934} (see also~\cite{prisner1988familien}) gave characterizations of $1$-perfectly orientable line graphs and of $1$-perfectly orientable triangle-free graphs, and showed that every graph representable as the intersection graph of connected subgraphs of unicyclic graphs is $1$-perfectly orientable. This implies that all chordal graphs and all circular arc graphs are $1$-perfectly orientable, as observed already in~\cite{MR1161986} and in~\cite{MR666799}, respectively. Bang-Jensen et al.~also showed that every graph having a unique induced cycle of order at least $4$ is $1$-perfectly orientable. The subclass of $1$-perfectly orientable graphs consisting of graphs that admit an orientation that is both an in-tournament and an out-tournament was characterized in~\cite{MR666799} (see also~\cite{MR1081957}) as precisely the class of proper circular arc graphs. A characterization of $1$-perfectly orientable graphs in terms of edge clique covers and characterizations of $1$-perfectly orientable co-bipartite graphs and cographs were obtained recently by  Hartinger and Milani\v c~\cite{2014arXiv1411.6663R}.

\medskip
\noindent{\bf Overview of our results and approach.}
In this paper we prove various structural properties of $1$-perfectly orientable graphs and apply them to derive our main results:
characterizations of $1$-perfectly orientable graphs within $K_4$-minor-free graphs and outerplanar graphs, respectively.
In both cases (Theorems~\ref{thm:k4-minor-free} and~\ref{thm:outerplanar-characterization}), characterizations include a complete description of the set of forbidden induced minors as well as a composition result.
To obtain these results, we split the study of the structure of connected $1$-perfectly orientable graphs into two natural cases -- biconnected graphs and graphs with cut vertices. In the latter case, it turns out that all but possibly one block in a graph need to have the additional property that the corresponding induced subgraph has a $1$-perfect orientation with a sink, that is, a vertex with empty out-neighborhood. These two cases as a rule yield two different structural results for $1$-perfectly orientable graphs within a given class of graphs, one for the biconnected case and the other for the general case. The general case is then reduced to the study of biconnected $1$-perfectly orientable rooted graphs, where the root plays the role of the (unique) sink in some $1$-perfect orientation of the graph. Two classes come forth in the investigation of $1$-perfectly orientable $K_4$-minor-free graphs; these are the well-known class of $2$-trees and the newly introduced class of non-chordal graphs, which are obtainable by a similar inductive construction as $2$-trees and which we call {\em hollowed 2-trees}.

\medskip
The known and new results on the relationships between the graph classes studied in this paper are summarized in the Hasse diagram on Fig.~\ref{fig:Hasse}.

\begin{figure}[h!]
\begin{center}
%\centerline
\includegraphics[width=\textwidth]{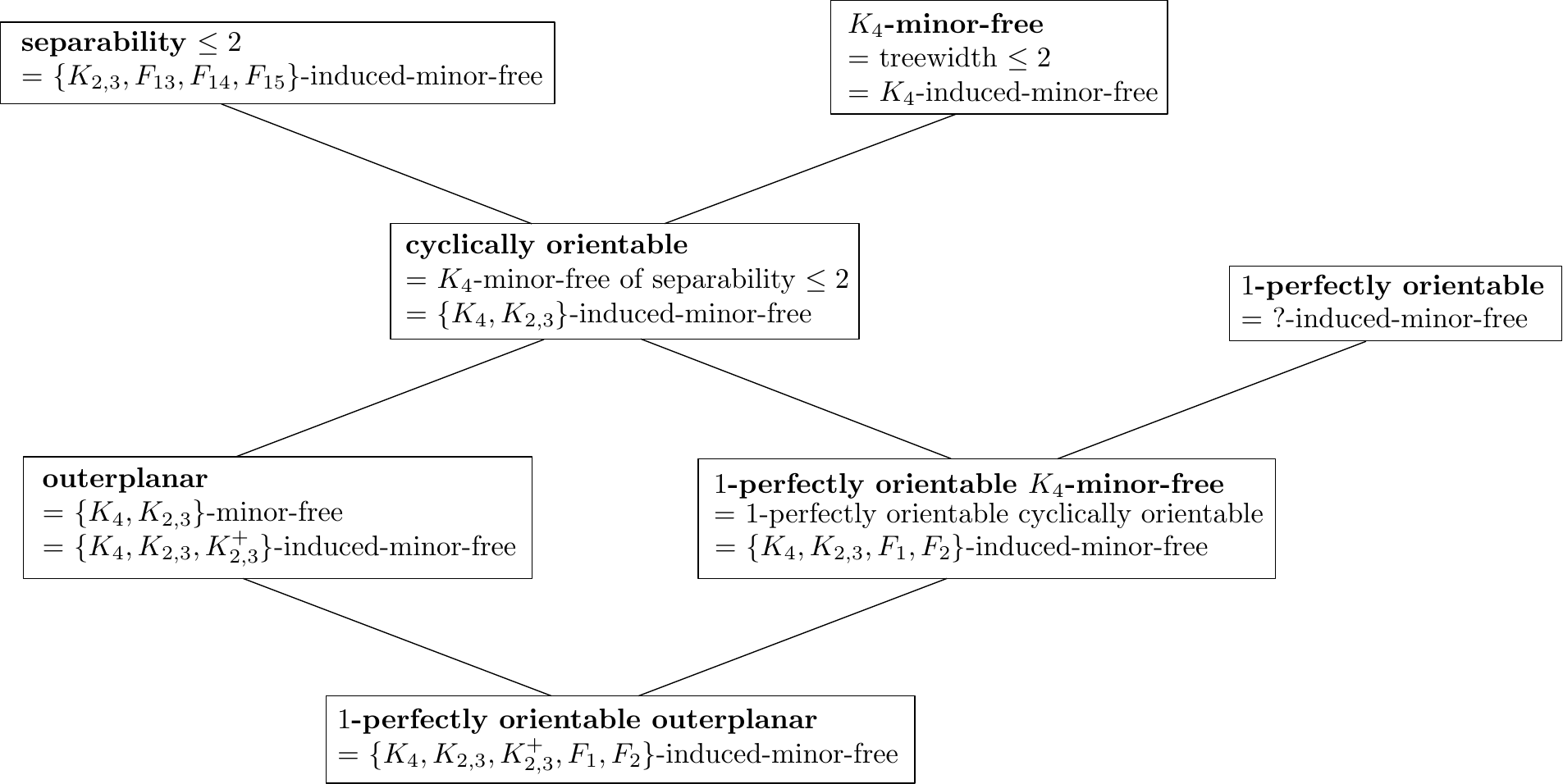}
\caption{Hasse diagram of inclusion relations between induced-minor-closed graph classes considered in this paper, summarizing known results and results obtained in this paper.}
\label{fig:Hasse}
\end{center}
\end{figure}

\medskip

\noindent{\bf Organization of the paper.} In  Section~\ref{sec:prelim} we fix the notation and state some preliminary results for later use; in particular, a result from \cite{2014arXiv1411.6663R} gives a long list of forbidden induced minors for the class of $1$-perfectly orientable graphs. In this section we also prove some preparatory results for later use; for instance, cyclically orientable graphs are characterized as exactly the $\{K_4,K_{2,3}\}$-induced-minor-free graphs and it is proved that every $1$-perfect orientation of a connected graph contains at most one sink.
In Section~\ref{sec:biconnected} the possible $1$-perfect orientations of graphs with cut-vertices are analyzed. As a by-product, connected $1$-perfectly orientable block-cactus graphs are characterized as exactly those block-cactus graphs in which at most one block is not complete.
In Section~\ref{sec:hollowed} we introduce the class of hollowed $2$-trees and present several properties of $2$-trees and of hollowed $2$-trees needed for the investigation of $1$-perfectly orientable graphs within the class of $K_4$-minor-free graphs. Finally, in Sections~\ref{sec:K4minor} and~\ref{sec:outerplanar} we consider $K_4$-minor-free graphs and outerplanar graphs, respectively, and prove several characterizations of $1$-perfectly orientable graphs within those graph classes. We conclude in Section~\ref{sec:conclusion} with an observation that $1$-perfectly orientable planar graphs are of bounded treewidth and pose a related open problem, the answer to which could lead to further insights on the structure of $1$-perfectly orientable graphs.

%------------------------------------------------------------------------------------------------------------------------------------

\section{Preliminaries}
\label{sec:prelim}
In this section, we review the basic notation and definitions, recall the basic properties of some graph classes relevant to our study, and prove some preliminary results.

We use standard graph theory notation. All graphs considered in this paper will be finite and simple but can be either undirected or directed; accordingly, we will refer to them as graphs or as digraphs, respectively. Given a graph $G$ and a subset $S$ of its vertices, the {\it subgraph of $G$ induced by $S$} is the graph denoted by $G[S]$ and defined as $(S,\{\{u,v\}: u,v\in S$ and $\{u,v\}\in E(G)\})$. Induced subgraphs of directed graphs are defined analogously. A subset of vertices in a graph is called a {\it clique} if it induces a complete graph. The {\it distance} between two vertices $u$ and $v$ in an undirected connected graph $G$ is denoted by $d_G(u,v)$ and defined as the minimum length (that is, number of edges) of a $u$,$v$-path in $G$.
 A {\it cut vertex} in a connected graph $G$ is a vertex $v$ such that the graph $G-v$ is disconnected.
A graph $G$ is {\it biconnected} if it is connected and has no cut vertices, and {\it $2$-connected}
if it is biconnected and has at least $3$ vertices.  A {\it block} of a graph $G$ is a maximal biconnected subgraph of $G$.
Every connected graph decomposes into a tree of blocks called the {\it block tree} of the graph.
The vertex set of the block tree $T$ of $G$ is the set ${\cal B}\cup C$ where ${\cal B}$ is the set of blocks of $G$ and $C$ is the set of cut vertices of $G$;
a block $B\in {\cal B}$ and a cut vertex $v\in C$ are connected by en edge in $T$ if and only if $v\in V(B)$.
Blocks of $G$ that are leaves of $T$ are called {\it end blocks} of $G$. Every leaf of the block tree $T$ is a block of $G$, thus every graph with a cut vertex
has at least two end blocks.
A {\it sink} in a directed graph is a vertex of out-degree zero.
A directed graph is said to be {\it sink-free} if it has no sinks.

\smallskip

{\bf Minors and induced minors.} A graph $H$ is said to be a {\it minor} of a graph $G$ if $H$ can be obtained from $G$ by a series of vertex deletions, edge deletions, and edge contractions, or, equivalently, if there exists an {\it minor model of $H$ in $G$}, that is,
a collection $\{S_v: v\in V(H)\}$ of pairwise disjoint subsets of $V(G)$ each inducing a connected subgraph
such that for every two adjacent vertices $u$ and $v$ of $H$, we have $\{x,y\} \in E(G)$ for some $x\in S_u$ and $y\in S_v$.
A graph $H$ is said to be an {\it induced minor} of $G$ if $H$ can be obtained from $G$ by a series of vertex deletions and edge contractions,
or, equivalently, if there exists an {\it induced minor model of $H$ in $G$}, that is,
a collection $\{S_v: v\in V(H)\}$ of pairwise disjoint subsets of $V(G)$ each inducing a connected subgraph
such that for every two distinct vertices $u$ and $v$ of $H$, we have $\{u,v\}\in E(H)$ if and only if $\{x,y\} \in E(G)$ for some $x\in S_u$ and $y\in S_v$.

Given a set ${\cal F}$ of graphs, a graph $G$ is said to be {\it ${\cal F}$-minor-free}
if no minor of $G$ is isomorphic to a member of $\cal F$.
Every minor-closed class ${\cal G}$ of graphs can be uniquely characterized in terms of {\it forbidden minors}.
That is, there exists a unique set $\cal F$ of graphs such that: (i) a graph $G$ is in ${\cal G}$ if and only if
$G$ is $\cal F$-minor-free, and (ii) every proper minor of every graph in $\cal F$ is in ${\cal G}$.
The notions of {\it ${\cal F}$-induced-minor-free} graphs and of {\it forbidden induced minors} are defined
analogously, with respect to the induced minor relation. For minor-closed graph classes, the sets of forbidden minors are always finite, while in the case of
induced-minor-closed graph classes, the sets of forbidden induced minors can be either finite or infinite.
Given two graphs $G$ and $H$, we say that $G$ is {\it $H$-free} \hbox{({\it $H$-minor-free}}, resp., {\it $H$-induced-minor-free})
if no induced subgraph of $G$ (no minor of $G$, resp., no induced minor of $G$) is isomorphic to $H$.

Note that for every graph $H$ an induced minor model of $H$ in $G$ is also a minor model of it, and if $H$ is a complete graph, then the converse holds as well. In particular, for $H = K_4$, this implies that a graph $G$ is $K_4$-minor-free if and only if it is $K_4$-induced-minor-free.
Moreover, if $H$ is a graph of maximum degree at most three and $G$ is any graph, then $H$ is isomorphic to a minor of $G$
if and only if $G$ contains a subgraph isomorphic to a subdivision of $H$. In particular, for $H = K_4$ we obtain that a graph $G$ is $K_4$-minor-free if and only if it does not contain a subgraph isomorphic to a subdivision of $K_4$.

\smallskip

{\bf Chordal graphs.} A graph $G$ is said to  be \emph{chordal} if it is $C_k$-free for all $k\ge 4$.
A vertex in a graph $G$ is {\it simplicial} if its neighborhood forms a clique. A {\it perfect elimination ordering} in a graph is a linear ordering of the vertices of the graph such that, for each vertex $v$, the neighbors of $v$ that occur after $v$ in the order form a clique.
Fulkerson and Gross showed that graph is chordal if and only if it has a perfect elimination ordering~\cite{MR0186421}; equivalently, if it can be reduced to the one-vertex graph by a sequence of simplicial vertex removals.
Note that the class of chordal graphs is closed both under vertex deletions and edge contractions, hence it is also closed under induced minors.
Consequently, a graph $G$ is chordal if and only if it is $C_4$-induced-minor-free.
%\begin{proposition}\label{prop:K4}
%A graph $G$ is $K_4$-minor-free if and only if it is $K_4$-induced-minor-free.
%\end{proposition}

For further background on graph theory we refer the reader to~\cite{opac-b1096791,MR2744811}.

\subsection{Preliminaries on $1$-perfectly orientable graphs, graphs of separability at most $2$, cyclically orientable graphs, and outerplanar graphs}

The class of $1$-perfectly orientable graphs is closed under induced minors~\cite{2014arXiv1411.6663R}.
The characterization of the class of $1$-perfectly orientable graphs in terms of forbidden induced minors is not known;
a~partial answer is given in the following theorem.

\begin{sloppypar}
\begin{theorem}[Hartinger and Milani\v c~\cite{2014arXiv1411.6663R}]\label{prop:1PO}
Let ${\cal F} = \{F_1,F_2,F_5,\ldots,F_{12}\}\cup {\cal F}_3\cup {\cal F}_4$, where: \begin{itemize}
   \item graphs $F_1$, $F_2$ are depicted in Fig.~\ref{fig:1}, and
  \item ${\cal F}_3 = \{\overline{C_{2k}}\mid k\ge 3\}$, the set of complements of even cycles of length at least $6$,
  \item ${\cal F}_4 = \{\overline{K_2+C_{2k+1}}\mid k\ge 1\}$, the set of complements of the graphs obtained as the disjoint union of $K_2$ and an
      odd cycle,
  \item for $i \in \{5,\ldots, 12\}$, graph $F_i$ is the complement of the graph $G_{i-4}$, depicted in Fig.~\ref{fig:1}.
\end{itemize} Then, every $1$-perfectly orientable graph is ${\cal F}$-induced-minor-free.
\end{theorem}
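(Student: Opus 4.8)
The plan is to invoke the fact, recalled just above, that the class of $1$-perfectly orientable graphs is closed under induced minors. Granting this, the asserted statement is equivalent to the claim that \emph{no} member of ${\cal F}$ is $1$-perfectly orientable, so it suffices to check, separately for each $F\in{\cal F}$, that $F$ admits no $1$-perfect orientation.

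The workhorse will be an elementary counting inequality. For a graph $G$ and a vertex $v$, let $\mu_G(v)$ denote the maximum size of a clique contained in $N_G(v)$. If $D$ is a $1$-perfect orientation of $G$, then $N^+_D(v)$ is a clique contained in $N_G(v)$, whence $|N^+_D(v)|\le\mu_G(v)$; summing over $v$ and using $\sum_{v}|N^+_D(v)|=|E(G)|$ yields $|E(G)|\le\sum_{v}\mu_G(v)$. Consequently any $G$ with $|E(G)|>\sum_{v}\mu_G(v)$ fails to be $1$-perfectly orientable. I would apply this first to ${\cal F}_4$: for $F=\overline{K_2+C_{2k+1}}$ every vertex satisfies $\mu_F(v)=k$ (for a vertex of the complemented odd cycle the relevant local maximum clique is a maximum independent set of size $k-1$ in a path together with one vertex of the $K_2$; for a vertex of the $K_2$ it is a maximum independent set of size $k$ in the odd cycle), so $\sum_v\mu_F(v)=(2k+3)k$, while $|E(F)|=\binom{2k+3}{2}-(2k+2)=(2k+3)k+1$, a contradiction. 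The same inequality, supplemented where necessary by a short case analysis on the edge orientations forced at low-degree vertices or at vertices whose neighborhood has a prescribed non-edge, should dispatch the sporadic graphs $F_1,F_2$ and $F_5,\dots,F_{12}$ (the latter being the complements of $G_1,\dots,G_8$ from Fig.~\ref{fig:1}).

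The remaining family ${\cal F}_3=\{\overline{C_{2k}}:k\ge 3\}$ needs a refinement, since there the bare counting bound leaves a slack of $k$. Label $V(\overline{C_{2k}})=\{v_0,\dots,v_{2k-1}\}$ so that $v_i\not\sim v_j$ exactly when $i\equiv j\pm 1\pmod{2k}$. The key observation is that $\overline{C_{2k}}[N(v_i)]$ is the complement of a path on $2k-3$ vertices, whose maximum cliques have size $k-1$ and, decisively, there is a \emph{unique} such clique, namely $\{v_j : j\equiv i\pmod 2,\ j\ne i\}$. Hence $|N^+_D(v_i)|\le k-1$ for every $i$, with equality forcing $N^+_D(v_i)=\{v_j : j\equiv i\pmod 2,\ j\ne i\}$. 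On the other hand, $\sum_i|N^+_D(v_i)|=|E(\overline{C_{2k}})|=2k^2-3k$ together with $|N^+_D(v_i)|\le k-1$ forces at least $k$ of the $2k$ vertices to attain the value $k-1$. But two distinct vertices $v_i,v_{i'}$ of the same parity cannot both attain it, for then $v_{i'}\in N^+_D(v_i)$ and $v_i\in N^+_D(v_{i'})$ would orient the edge $v_iv_{i'}$ in both directions; so at most two vertices attain the maximum, contradicting $k\ge 3$.

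I expect the genuine difficulty to lie not in any one of these arguments but in the bookkeeping for the sporadic cases: since $K_{2,3}$ (which is the $k=1$ member of ${\cal F}_4$) and the graphs in ${\cal F}_3$ and ${\cal F}_4$ are each minimal obstructions, one cannot economize by exhibiting one forbidden graph as an induced minor of another, so each of $F_1,F_2,F_5,\dots,F_{12}$ genuinely requires its own short verification that no $1$-perfect orientation exists; carrying these out uniformly — ideally all through the counting inequality or through a single ``forced orientation'' lemma — is where most of the care is needed.
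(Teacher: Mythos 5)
First, a point of comparison: the paper does not prove this statement at all --- Theorem~\ref{prop:1PO} is imported verbatim from the cited reference of Hartinger and Milani\v c, so there is no in-paper argument to measure yours against. Judged on its own terms, your overall strategy is the right one: since the class of $1$-perfectly orientable graphs is closed under induced minors, it suffices to show that no member of ${\cal F}$ is $1$-perfectly orientable. Your counting inequality $|E(G)|\le\sum_v\mu_G(v)$ is correct, and your computation for ${\cal F}_4$ checks out ($\sum_v\mu_F(v)=(2k+3)k$ versus $|E(F)|=(2k+3)k+1$). The refinement for ${\cal F}_3$ is also complete and correct: the neighborhood of each $v_i$ in $\overline{C_{2k}}$ induces the complement of a path on $2k-3$ vertices, whose unique maximum clique is the parity class of $i$; the degree sum forces at least $k$ vertices to attain out-degree $k-1$, while the double-orientation obstruction caps this at one vertex per parity class, contradicting $k\ge 3$. (For $k=3$ this correctly rules out the prism $\overline{C_6}$.) The graphs $F_1$ and $F_2$ are triangle-free with more edges than vertices, so they too fall to the bare inequality, even though you did not spell this out.

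The genuine gap is the eight sporadic graphs $F_5,\ldots,F_{12}$, the complements of $G_1,\ldots,G_8$ from Fig.~\ref{fig:1}. For these you offer only the assertion that the counting inequality ``supplemented where necessary by a short case analysis'' should work. That is not a proof: these are complements of small graphs, hence dense, and for dense graphs the quantity $\sum_v\mu_G(v)$ typically exceeds $|E(G)|$ by a wide margin, so the inequality alone will generally not suffice and the promised forced-orientation analysis is exactly the content that is missing. Since each of these eight graphs is (by design of the list) a minimal obstruction, none can be shortcut by exhibiting another member of ${\cal F}$ inside it; each requires its own verification, and as written your argument establishes the theorem only for the two infinite families and for $F_1,F_2$.
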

\end{sloppypar}

 \begin{figure}[h!]
  \centering
\includegraphics[width=0.9\textwidth]{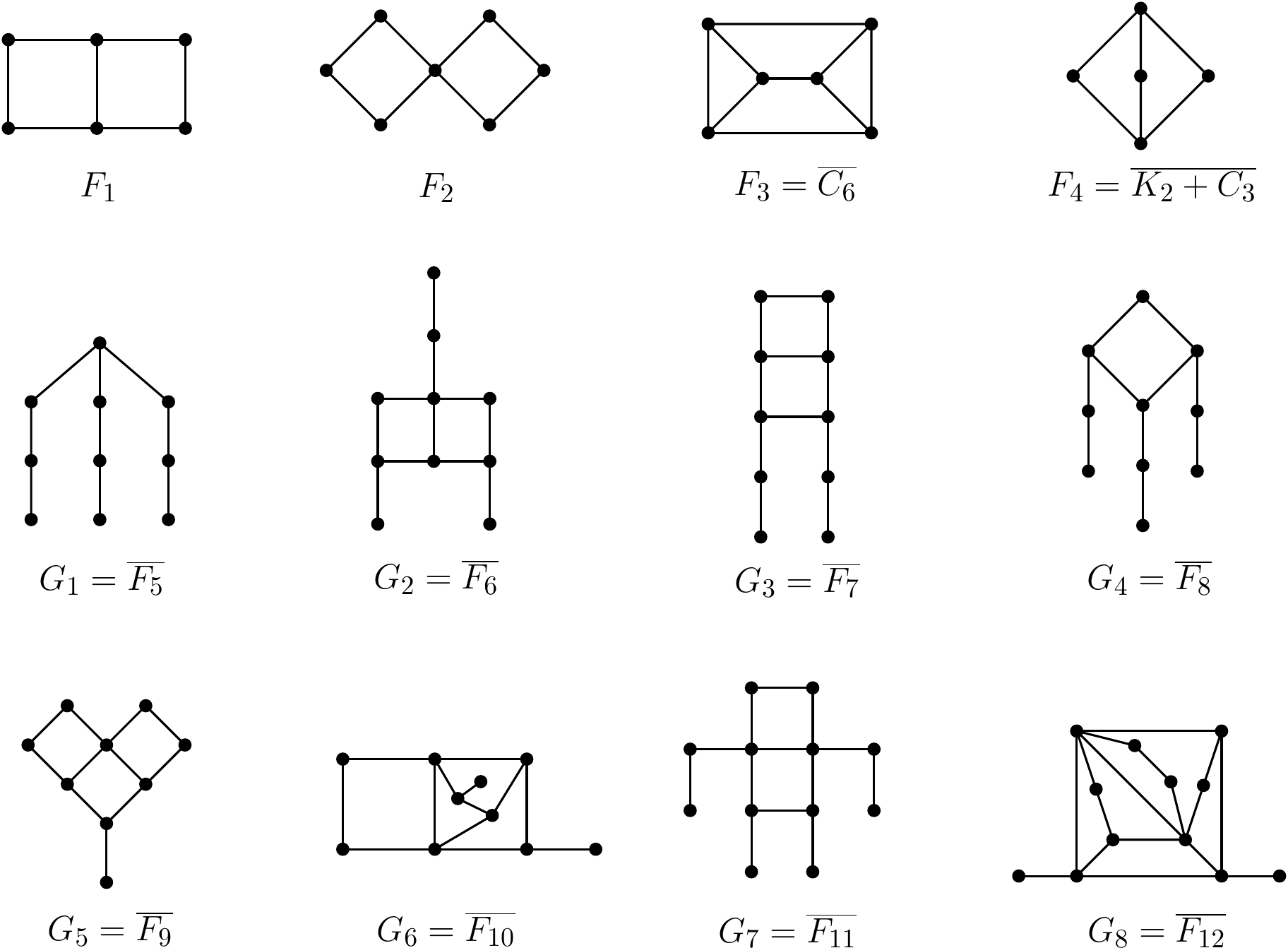}
\caption{Four non-$1$-perfectly orientable~graphs and $8$ complements of non-$1$-perfectly orientable graphs. Graphs $F_3$ and $F_4$ are the smallest members of families ${\cal F}_3$ and ${\cal F}_4$, respectively.}\label{fig:1}
\end{figure}

An orientation of a cycle is said to be {\it cyclic} if it has no sink.
The following simple observation about $1$-perfectly orientable graphs is also from~\cite{2014arXiv1411.6663R}.

\begin{lemma}\label{lem:cycles}
In every $1$-perfect orientation of a graph $G$, every induced cycle of $G$ of length at least $4$ is oriented cyclically.
\end{lemma}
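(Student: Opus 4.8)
The plan is to argue by contradiction. Suppose $D$ is a $1$-perfect orientation of $G$ and that $C$ is an induced cycle of $G$ of length $k \ge 4$ which, in the orientation inherited from $D$, is not oriented cyclically. Since $C$ is not oriented cyclically, some vertex $v$ of $C$ is a sink \emph{within} $C$, i.e.\ both edges of $C$ incident to $v$ are oriented towards $v$ in $D$. Let $x$ and $y$ be the two neighbours of $v$ on $C$; then the arcs $(x,v)$ and $(y,v)$ both belong to $D$.

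The key step is to use the $1$-perfect property at $x$ and at $y$. Since $(x,v)\in D$, the vertex $v$ lies in the out-neighbourhood of $x$, which by hypothesis is a clique of $G$; similarly $v$ lies in the out-neighbourhood of $y$. Now consider the orientation of the edge $\{x,y\}$: in $D$ this edge is oriented one way or the other, say without loss of generality $(x,y)\in D$ (the other case is symmetric by exchanging the roles of $x$ and $y$). Then $y$ lies in the out-neighbourhood of $x$. Hence the out-neighbourhood of $x$ contains both $v$ and $y$, so $\{v,y\}\in E(G)$. But $x$, $v$, $y$ are three consecutive vertices of the induced cycle $C$ of length $k\ge 4$, so $x$ and $y$ are non-adjacent in $C$ and therefore, since $C$ is induced, non-adjacent in $G$ — and $\{v,y\}$ is an edge of $C$, which is fine, but what we actually need is the chord between the two neighbours of $v$ that are \emph{not} already adjacent.

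Let me restate this more carefully, as the previous paragraph identified the wrong chord. With $v$ a sink on $C$ and $x,y$ its neighbours on $C$, we have $(x,v),(y,v)\in D$. Look instead at the predecessor $x'$ of $x$ on $C$ (so $x'\ne v$ since $k\ge 4$) — actually the cleanest argument is: the out-neighbourhood of $v$'s two in-neighbours forces a chord. Concretely, the out-neighbourhood $N^+_D(x)$ is a clique containing $v$, and the out-neighbourhood $N^+_D(y)$ is a clique containing $v$. Consider the edge $\{x,y'\}$ where $y'$ is the other neighbour of $x$ on $C$ (so the neighbours of $x$ on $C$ are $v$ and $y'$); if $(x,y')\in D$ then $\{v,y'\}\in E(G)$, a chord of $C$ since $v$ and $y'$ are at distance $2$ on $C$ (using $k\ge 4$), contradicting that $C$ is induced. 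If instead $(y',x)\in D$, then $x\in N^+_D(y')$; but also $x$ has an out-neighbour, and we chase the orientation along $C$ away from $v$. Following the cycle, every time an edge points "backward" toward $v$ we generate, via the clique condition on the tail, a chord of $C$; since $C$ is chordless and finite, propagating this forces a sink at $v$'s far side as well or a direct contradiction. The main obstacle is organizing this propagation cleanly: the cleanest formulation is to observe that at a sink $v$ of $C$, both neighbours $x,y$ of $v$ on $C$ have $v$ in their out-neighbourhood, hence $\{x,y\}\in E(G)$ (the out-neighbourhood of whichever of $x,y$ is the tail of the oriented edge between them is a clique containing $v$ and the other endpoint — wait, $x$ and $y$ need not be adjacent). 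The genuinely correct and short argument: since $v$ is a sink on $C$, its $C$-neighbours $x$ and $y$ satisfy $v\in N^+_D(x)\cap N^+_D(y)$. Now $x$ has degree $\ge 2$ in $C$, so $x$ has a $C$-neighbour $w\ne v$; the edge $\{x,w\}$ is oriented, and in either direction one endpoint has the other plus $v$ in its out-neighbourhood only if... The honest summary of the obstacle: one must pick the sink $v$ on $C$ together with the right incident chord; the clean fix is that at the edge between $x$ and the vertex preceding $x$ on $C$, orienting it toward $x$ makes $x\in N^+$ of that vertex while $v\notin$, no contradiction yet — so instead walk around $C$ from $v$ in both directions until the orientations first point toward each other, yielding another sink $v'$, then apply the clique condition at the two in-neighbours of $v'$ whose distance on $C$ is $2$ to produce a chord. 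I expect setting up this "two sinks / nearest chord" bookkeeping to be the main point of the proof; everything else is the one-line clique observation.
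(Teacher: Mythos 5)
Your proposal circles around the right one‑line observation but organizes the argument around the wrong kind of vertex, and the concluding ``bookkeeping'' you describe would not close the proof. The vertex at which the clique condition produces a chord is a \emph{source} of the cycle $C$, i.e.\ a vertex $w$ both of whose $C$-edges are oriented away from $w$: then both $C$-neighbours of $w$ lie in $N^+_D(w)$, which is a clique, and since they are at distance $2$ on $C$ (this is where $|V(C)|\ge 4$ is used), $C$ would have a chord. You do apply exactly this correctly once, in the case $(x,y')\in D$, where $x$ has out-arcs to both $v$ and $y'$. But your final plan --- walk around $C$ until you find ``another sink $v'$'' and then ``apply the clique condition at the two in-neighbours of $v'$'' --- is the useless dual statement: at an in-neighbour $a$ of a sink $v'$ the clique condition only says $v'\in N^+_D(a)$, and nothing ties the two in-neighbours of $v'$ to each other. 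So as written, the second case of your argument never reaches a contradiction.

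The repair is short and makes the initial sink superfluous. Either (i) observe directly that no vertex of $C$ can have $C$-out-degree $2$ (by the source observation above); since the $C$-out-degrees sum to $|E(C)|=|V(C)|$, every vertex then has $C$-out-degree exactly $1$ and the orientation of $C$ is cyclic; or (ii) keep your propagation but terminate it correctly: starting from the sink $v=c_0$ with $c_1\to c_0$ and $c_{k-1}\to c_0$, take the least $i\ge 1$ with $c_i\to c_{i+1}$ (indices mod $k$); minimality gives $c_i\to c_{i-1}$ as well, so $c_i$ is a source and its two $C$-neighbours must be adjacent, a chord. Note that the paper does not prove this lemma itself but quotes it from the Hartinger--Milani\v{c} reference, so there is no in-paper proof to match; the counting/source argument above is the standard one.
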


For a positive integer $k$, {\it graphs of separability at most $k$} were defined by Cicalese and Milani\v c in~\cite{MR2901082}
as the graphs in which every two non-adjacent vertices are separated by a set of at most $k$ other vertices.
Several characterizations of graphs of separability at most $2$ were given in~\cite{MR2901082}. In the next theorem we summarize those relevant to this paper (Theorems 1 and 9 in~\cite{MR2901082}). We say that a graph $G$ is obtained from two graphs $G_1$ and $G_2$ by {\em pasting along a $k$-clique}, and denote this by $G = G_1\oplus_k G_2$,
if for some $r\le k$ there exist two $r$-cliques $K^{(1)} = \{x_1,\ldots, x_r\}\subseteq V(G_1)$  and $K^{(2)} = \{y_1,\ldots, y_r\}\subseteq V(G_2)$ such that $G$ is isomorphic to the graph obtained from the disjoint union of $G_1$ and $G_2$ by identifying each $x_i$ with $y_i$, for all $i=1,\ldots, r$. In particular, if $k=0$, then $G_1\oplus_kG_2$ is the disjoint union of $G_1$ and $G_2$, and if $k=1$, then the graph $G_1\oplus_kG_2$ has a cut vertex.

\begin{theorem}[Cicalese-Milani\v c~\cite{MR2901082}]\label{thm:separability-2}
For every graph $G$, the following statements are equivalent.
\begin{enumerate}
  \item $G$ is of separability at most $2$.
  \item $G$ is $\{K_{2,3}, F_{13},F_{14},F_{15}\}$-induced-minor-free, where $K_{2,3}, F_{13},F_{14},F_{15}$ are the four graphs depicted in Fig.~\ref{fig:forb}.
  \item $G$ can be constructed from complete graphs and cycles by an iterative application of pasting along $2$-cliques.
\end{enumerate}
\end{theorem}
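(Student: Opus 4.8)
We prove the equivalences in the cyclic order $(3)\Rightarrow(1)$, $(1)\Leftrightarrow(2)$, and $(1)\Rightarrow(3)$; the first implication is easy, the second rests on Menger's theorem, and the third carries the structural content. For $(3)\Rightarrow(1)$, I would first record the base cases: every complete graph has separability $0$ vacuously, and in a cycle $C_n$ with $n\ge 4$ any two non-adjacent vertices are separated by deleting one interior vertex from each of the two arcs joining them, so $C_n$ has separability $2$ (and $C_3=K_3$ is complete). It then suffices to show that $\oplus_k$ with $k\le 2$ preserves ``separability at most $2$''. Let $G=G_1\oplus_2 G_2$ with $G_1,G_2$ of separability at most $2$, sharing a clique $K$ of size at most $2$, and let $u,v$ be non-adjacent in $G$. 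If $u$ and $v$ lie in different sides, i.e. $u\in V(G_1)\setminus V(G_2)$ and $v\in V(G_2)\setminus V(G_1)$ (or vice versa), then $K$ is itself a separator of size at most $2$. Otherwise both lie in the same $G_i$, say $G_1$, and since $K$ is a clique at least one of them, say $v$, lies outside $K$; take a separator $S\subseteq V(G_1)$ of size at most $2$ separating $u$ from $v$ in $G_1$. Any $u$,$v$-path in $G-S$ can be turned into a $u$,$v$-walk in $G_1-S$ by replacing each maximal subpath running through $V(G_2)\setminus V(G_1)$ — which enters and leaves the $G_2$-side through $K$ — by a single vertex or single edge of $K$, contradicting the choice of $S$. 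Hence $S$ separates $u$ from $v$ in $G$, and an induction along the construction sequence gives $(3)\Rightarrow(1)$.

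For $(1)\Leftrightarrow(2)$, the first move is to reformulate, via Menger's theorem, ``$G$ has separability at most $2$'' as ``no two non-adjacent vertices of $G$ are joined by three internally vertex-disjoint paths''. In this form the class of graphs of separability at most $2$ is seen to be closed under induced minors: given an induced-minor model $\{S_w : w\in V(H)\}$ of $H$ in $G$ with non-adjacent $u,v$ of $H$ joined by three internally disjoint paths in $H$, one inflates these paths inside the connected branch sets, using that there is no edge of $G$ between $S_u$ and $S_v$, to obtain two non-adjacent vertices of $G$ joined by three internally disjoint paths. Since one checks directly (see Fig.~\ref{fig:forb}) that each of $K_{2,3},F_{13},F_{14},F_{15}$ contains two non-adjacent vertices joined by three internally disjoint paths — for $K_{2,3}$, the two vertices in the part of size two — each of them has separability at least $3$, which yields $(1)\Rightarrow(2)$. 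For $(2)\Rightarrow(1)$ I would argue contrapositively: if $G$ has separability at least $3$, choose non-adjacent $u,v$ together with three internally disjoint $u$,$v$-paths $P_1,P_2,P_3$ of minimum total length; each $P_i$ has length at least $2$, and minimality forces the $P_i$ to be chordless and restricts which edges of $G[V(P_1)\cup V(P_2)\cup V(P_3)]$ join interiors of distinct $P_i$. Contracting the interiors of the three paths and running a short case analysis on the possible cross-edges then exhibits one of $K_{2,3},F_{13},F_{14},F_{15}$ as an induced minor of $G$, with $K_{2,3}$ arising in the chord-free case.

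For $(1)\Rightarrow(3)$ I would use induction on $|V(G)|$. If $G$ is complete or a cycle we are done. If $G$ is disconnected, or has a cut vertex $w$, then $G=G_1\oplus_0 G_2$, respectively $G=G_1\oplus_1 G_2$, for suitable induced subgraphs $G_1,G_2$, each of separability at most $2$ (this property is clearly inherited by induced subgraphs), so induction applies. So assume $G$ is $2$-connected, not complete, and not a cycle. The key step — and the main obstacle — is to produce a clique cutset of size $2$: taking a non-adjacent pair $u,v$ at minimum distance and a minimum $u$,$v$-separator $S$, $2$-connectedness forces $|S|=2$, say $S=\{a,b\}$, and one must then show that $a$ and $b$ are adjacent (otherwise, tracing $a$,$b$-paths through the two sides of $S$ and invoking separability at most $2$ leads either to a contradiction with the minimality of $d_G(u,v)$ or forces $G$ to be a cycle). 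Once an edge $ab$ with $\{a,b\}$ a cutset is in hand, write $G=G_1\oplus_2 G_2$ with $G_i=G[C_i\cup\{a,b\}]$ over the components $C_i$ of $G-\{a,b\}$ grouped into two nonempty parts, observe that each $G_i$ has separability at most $2$, and conclude by induction. I expect the extraction of an explicit forbidden induced minor in $(2)\Rightarrow(1)$ and, above all, the production of a $2$-clique cutset in $(1)\Rightarrow(3)$ to be the parts requiring genuine care; the remaining implications are routine.
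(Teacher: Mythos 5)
The paper does not actually prove this theorem: it is imported verbatim from Cicalese and Milani\v c~\cite{MR2901082} (Theorems 1 and 9 there), so there is no in-paper argument to compare yours against and your proposal has to stand on its own. Most of it does. Your $(3)\Rightarrow(1)$ is complete and correct, and your route to $(1)\Leftrightarrow(2)$ is the standard one: separability at most $2$ is closed under induced minors, each graph in Fig.~\ref{fig:forb} has two non-adjacent vertices joined by three internally disjoint paths, and contracting the interiors of three internally disjoint $u$,$v$-paths of minimum total length yields $K_{2,3}$ with some subset of edges added among the three middle vertices, which is exactly the list $\{K_{2,3},F_{13},F_{14},F_{15}\}$; that case analysis is indeed short. (Your ``inflation'' argument for induced-minor closure does need a small repair: the three lifted paths must be given common endpoints inside the connected branch sets $S_u$ and $S_v$ without losing internal disjointness, which is easier to handle by arguing with separators, or by checking closure under a single edge contraction, than with path systems.)

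The genuine gap is in $(1)\Rightarrow(3)$, at the step you yourself flag as the crux. You claim that if $u,v$ is a non-adjacent pair at minimum distance and $S=\{a,b\}$ is a minimum $u$,$v$-separator with $a$ and $b$ non-adjacent, then one reaches a contradiction with the minimality of $d_G(u,v)$ or concludes that $G$ is a cycle. This dichotomy is false. Take $G=K_3\oplus_2 C_4$, with vertices $a,b,c,p,q$ and edges $ab,bc,ca,bp,pq,qa$. This graph is $2$-connected, has separability at most $2$, and is neither complete nor a cycle; every non-adjacent pair is at distance $2$; and the pair $(a,p)$ has $\{b,q\}$ as its \emph{unique} minimum separator, with $b$ and $q$ non-adjacent. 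So your procedure, applied to the admissible choice $u=a$, $v=p$, stalls: there is no contradiction and no cycle, yet the separator it produces is not a clique. (The graph does have the $2$-clique cutset $\{a,b\}$, but one finds it from the pair $(c,p)$, not from $(a,p)$.) Extracting a clique cutset in a $2$-connected, non-complete, non-cycle graph of separability at most $2$ therefore requires a more careful choice of the pair or of the separator, and this is precisely where the content of the decomposition theorem lies; as written, your induction for $(1)\Rightarrow(3)$ does not go through.
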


\begin{figure}[h!]
\begin{center}
\includegraphics[height=24mm]{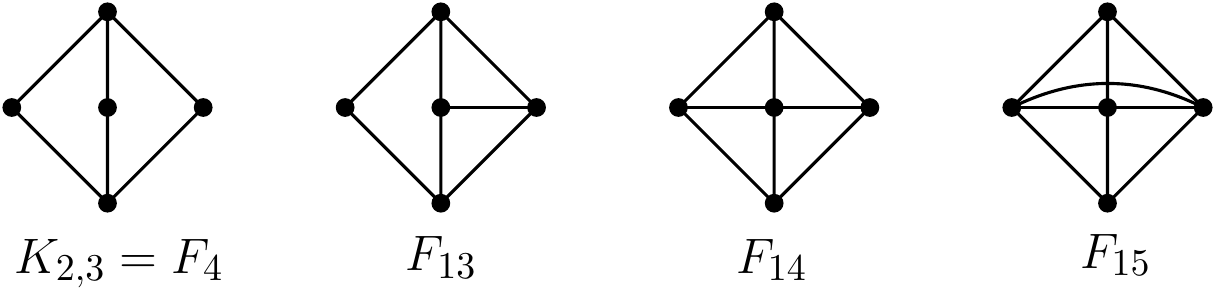}
\caption{Forbidden induced minors for the class of graphs of separability at most 2}\label{fig:forb}
\end{center}
\end{figure}

A graph $G$ is said to be {\it cyclically orientable} if it admits an orientation in which every chordless cycle is oriented cyclically. Motivated by applications of cyclically orientable graphs to cluster algebras, this family of graphs was introduced by Barot et al.~\cite{MR2241966} and studied further by Gurvich~\cite{MR2370526} and Zou~\cite{MR2643278}. The following theorem combines~\cite[Theorems 1 and 4]{MR2370526} and~\cite[Theorem 1]{Speyer}.

\begin{theorem}[Gurvich~\cite{MR2370526}; Speyer~\cite{Speyer}]\label{thm:CO}
For every graph $G$, the following statements are equivalent:
\begin{enumerate}
  \item $G$ is cyclically orientable.
  \item $G$ can be built from copies of $K_1$, $K_2$, and cycles
  by an iterative application of pasting along $2$-cliques.
  \item $G$ is a $K_4$-free graph of separability at most $2$.
\end{enumerate}
\end{theorem}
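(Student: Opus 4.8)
The plan is to establish the cyclic chain of implications $(2)\Rightarrow(1)\Rightarrow(3)\Rightarrow(2)$, using Theorem~\ref{thm:separability-2} to shortcut the otherwise delicate combinatorial arguments behind the equivalence $(1)\Leftrightarrow(2)$. I expect the step $(1)\Rightarrow(3)$ to be the main obstacle, because it relies on the fact that the class of cyclically orientable graphs is closed under induced minors (a known property, noted in the Introduction; without it one would have to reprove a direct decomposition theorem for cyclically orientable graphs in the spirit of Gurvich), together with a finite but slightly tedious verification that the relevant small graphs admit no cyclic orientation.

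For $(2)\Rightarrow(1)$, I would first note that $K_1$ and $K_2$ contain no chordless cycle and that a cycle $C_n$ has itself as its only chordless cycle, which we orient as a directed cycle; hence every building block is cyclically orientable. The inductive step is to show that if $G=G_1\oplus_kG_2$ with $k\le 2$ and both $G_1,G_2$ are cyclically orientable, then so is $G$. Fix cyclic orientations $D_1$ of $G_1$ and $D_2$ of $G_2$. Since reversing every arc of a directed cycle again yields a directed cycle, reversing all arcs of $D_1$ preserves cyclicity, so we may assume that $D_1$ and $D_2$ agree on the (at most one) edge contained in $V(G_1)\cap V(G_2)$; then $D:=D_1\cup D_2$ is a well-defined orientation of $G$. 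The crucial observation is that every chordless cycle of $G$ lies entirely in $G_1$ or entirely in $G_2$: since the pasting is along a clique, $G_1$ and $G_2$ are induced subgraphs of $G$, and a chordless cycle using both a vertex of $V(G_1)\setminus V(G_2)$ and a vertex of $V(G_2)\setminus V(G_1)$ would have to pass through both endpoints $x,y$ of the shared edge, decomposing into an $x,y$-path in $G_1$ and an $x,y$-path in $G_2$; if both paths have length at least two then $xy$ is a chord, and otherwise the cycle already lies inside a single $G_i$ --- a contradiction in either case. Thus every chordless cycle of $G$ is a chordless cycle of some $G_i$ and is oriented cyclically by the restriction of $D$ to $G_i$, which equals $D_i$; hence $D$ is cyclic.

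For $(1)\Rightarrow(3)$, I would invoke that cyclically orientable graphs are closed under induced minors, so it suffices to check that none of $K_4,K_{2,3},F_{13},F_{14},F_{15}$ is cyclically orientable; then $G$ is $\{K_4,K_{2,3},F_{13},F_{14},F_{15}\}$-induced-minor-free, hence in particular $K_4$-free and, by Theorem~\ref{thm:separability-2}(2), of separability at most $2$. That $K_4$ is not cyclically orientable follows by orienting one edge as $1\to 2$: the triangles on $\{1,2,3\}$ and $\{1,2,4\}$ are then forced to be the directed triangles $1\to2\to3\to1$ and $1\to2\to4\to1$, so $3\to1$ and $4\to1$, and the triangle on $\{1,3,4\}$ has $1$ as a sink. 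For $K_{2,3}$, propagating forced arcs around two of its three chordless $4$-cycles forces a sink on the third $4$-cycle. The graphs $F_{13},F_{14},F_{15}$ are handled by the same kind of finite case analysis; any of them containing a $K_4$ subgraph is non-cyclically-orientable already because $K_4$ is.

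For $(3)\Rightarrow(2)$, I would apply Theorem~\ref{thm:separability-2}(3): $G$ can be obtained from complete graphs and cycles by iterated pasting along $2$-cliques. As in the proof of $(2)\Rightarrow(1)$, each building block, being iteratively pasted along cliques, is an induced subgraph of $G$, so no building block contains an induced $K_4$; hence every complete building block is $K_1$, $K_2$, or $K_3=C_3$, and $K_3$ is a cycle. Therefore $G$ is built from $K_1$, $K_2$, and cycles by pasting along $2$-cliques, which is precisely statement $(2)$.
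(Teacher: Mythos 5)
This is a theorem the paper cites from Gurvich and Speyer and does not prove, so there is no in-paper argument to compare against; judged on its own terms, your directions $(2)\Rightarrow(1)$ and $(3)\Rightarrow(2)$ are sound, but $(1)\Rightarrow(3)$ has a genuine gap. You rest that implication on the assertion that cyclically orientable graphs are closed under induced minors. Closure under vertex deletion is indeed immediate (a chordless cycle of an induced subgraph is a chordless cycle of the whole graph), but closure under edge contraction is not: contracting an edge changes the family of chordless cycles in an uncontrolled way, and there is no evident recipe for turning a cyclic orientation of $G$ into one of the contracted graph. Within this paper, the only justification for that closure property (in the proof of Theorem~\ref{thm:K4K23-i-m-free}) is Theorem~\ref{thm:CO} itself, combined with the induced-minor closure of separability at most $2$ --- so invoking it here is circular. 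To repair $(1)\Rightarrow(3)$ you would need to prove independently either that edge contraction preserves cyclic orientability, or that a cyclically orientable graph has separability at most $2$ directly (say, by analyzing a length-minimal configuration of three internally disjoint paths joining two non-adjacent vertices); either of these is essentially the substance of Gurvich's theorem rather than a routine verification.

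A secondary, fixable issue in the same step: even granting induced-minor closure, your dispatch of $F_{13},F_{14},F_{15}$ leans on the remark that any of them ``containing a $K_4$ subgraph'' is already handled; but the paper only asserts that each of these graphs can be \emph{contracted} to $K_4$, and at least one of them contains $K_4$ only as a topological minor, not as a subgraph, so you would need either the full finite case analysis you allude to or the (again cited, non-trivial) Lemma~\ref{lem:CO-K4}. On the positive side, your $(2)\Rightarrow(1)$ argument --- reversing one of the two orientations so that they agree on the shared edge, and observing that a chordless cycle cannot straddle a $2$-clique cut because the clique edge would be a chord --- is correct and complete, and $(3)\Rightarrow(2)$ correctly uses the fact that the building blocks of Theorem~\ref{thm:separability-2} are induced subgraphs of $G$ to exclude complete blocks on four or more vertices.
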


As a consequence, we obtain the following.

\begin{corollary}\label{cor:CO}
If $G$ is biconnected and cyclically orientable, then $G$ can be build from cycles by an iterative application of pasting along edges.
\end{corollary}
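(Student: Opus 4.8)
The plan is to deduce the statement from the equivalence of~(1) and~(2) in Theorem~\ref{thm:CO} by induction on $|V(G)|$, reducing everything to the claim that a $2$-connected cyclically orientable graph that is not a cycle decomposes as $G = G_1\oplus_2 G_2$, a pasting along an edge of two strictly smaller $2$-connected cyclically orientable graphs. (We assume $|V(G)|\ge 3$, the degenerate graphs $K_1$ and $K_2$ not being of interest here.) By Theorem~\ref{thm:CO}, $G$ admits a construction $\cal C$ from copies of $K_1$, $K_2$, and cycles by an iterated pasting along $2$-cliques; I would fix such a $\cal C$ using the fewest pasting operations. If $\cal C$ uses no pasting operation then $G$ is one of the basic blocks, hence (as $K_1$, $K_2$ are excluded and $K_3 = C_3$) a cycle, and there is nothing to prove. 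Otherwise I look at the last operation $G = A\oplus_2 B$, a pasting along an $r$-clique $Q$ with $r\le 2$, where $A$ and $B$ are the two previously built graphs. Note that $A = G[V(A)]$ and $B = G[V(B)]$: any edge of $G$ inside $V(A)$ lies in $E(A)\cup E(B)$, and one coming from $E(B)$ lies inside $Q$, hence already in $E(A)$ since $Q$ is a clique of $A$. In particular $A$ and $B$, being induced subgraphs of $G$, are cyclically orientable (a chordless cycle of an induced subgraph is also a chordless cycle of the ambient graph, so restricting a cyclic orientation of $G$ yields cyclic orientations of $A$ and of $B$).

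Next I would show that minimality of $\cal C$ together with $2$-connectedness of $G$ forces $r = 2$ and $|V(A)|, |V(B)|\ge 3$. If $r = 0$ then $G$ is disconnected, a contradiction. If $r = 1$, say $Q = \{v\}$: should $V(A) = \{v\}$ (or $V(B) = \{v\}$), then $G$ coincides with $B$ (respectively with $A$), contradicting minimality of $\cal C$; otherwise both $V(A)\setminus Q$ and $V(B)\setminus Q$ are nonempty and, as $G$ has no edge joining them, $v$ is a cut vertex of $G$, contradicting $2$-connectedness. Finally, if $r = 2$ but, say, $|V(A)|\le 2$, then $V(A) = Q$, so $A$ is the single edge $Q$ and $G$ again coincides with $B$, contradicting minimality. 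Since $V(B)\setminus Q\ne\emptyset$ is disjoint from $V(A)$ we get $|V(A)| < |V(G)|$, and symmetrically $|V(B)| < |V(G)|$.

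It then remains to verify that $A$ and $B$ are $2$-connected, so that the induction hypothesis applies to them. I would argue the contrapositive for $A$ (the argument for $B$ being symmetric): if $A$ were disconnected or had a cut vertex $u$, then — using that the only vertices of $V(A)$ having a neighbor outside $V(A)$ are those of the clique $Q$ — a short case analysis on the position of $u$ relative to $Q$ shows that $u$, or in the disconnected case any vertex of $Q$, would already be a cut vertex of $G$, a contradiction. With $A$ and $B$ now $2$-connected cyclically orientable graphs on fewer vertices than $G$, the induction hypothesis gives that each of them is obtained from cycles by iterated pasting along edges; running the construction of $A$, then that of $B$, and finally pasting $A$ and $B$ along the edge $Q$ exhibits $G$ in the required form, which closes the induction.

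I expect the only real friction to be the bookkeeping needed to discard the degenerate building blocks $K_1$, $K_2$ and the degenerate pastings along cliques of size at most~$1$, which are permitted by Theorem~\ref{thm:CO}(2) but not by the conclusion; choosing $\cal C$ of minimum length is what keeps this painless. The remaining ingredient — that pasting two graphs along an edge cannot create $2$-connectivity absent from the two parts — is routine but is the one step that requires a careful case distinction on where a hypothetical cut vertex sits relative to the shared edge.
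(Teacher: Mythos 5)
Your argument is correct and is exactly the deduction the paper has in mind: the paper states the corollary as an immediate consequence of Theorem~\ref{thm:CO} and omits precisely the details you supply (using biconnectedness and a minimal construction to eliminate the degenerate blocks $K_1$, $K_2$ and the pastings along cliques of size at most one). One cosmetic quibble: if $A$ were disconnected, a component of $A$ avoiding $Q$ would already be a component of $G$, so $G$ itself would be disconnected --- you do not need to exhibit a cut vertex in $Q$ in that subcase.
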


The following is~\cite[Lemma 2]{MR2370526}.

\begin{lemma}[Gurvich~\cite{MR2370526}]\label{lem:CO-K4}
Cyclically orientable graphs contain no subgraphs isomorphic to a subdivision of $K_4$.
\end{lemma}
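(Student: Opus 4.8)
The plan is to argue by contradiction, using the characterization of cyclically orientable graphs from Theorem~\ref{thm:CO}: every cyclically orientable graph is $K_4$-free and of separability at most~$2$. So suppose that $G$ is cyclically orientable but nevertheless contains, as a subgraph, a subdivision of $K_4$; among all such subgraphs of $G$, choose one, call it $S$, with the fewest vertices. Let $a,b,c,d$ be the four branch vertices of $S$ (the vertices of degree $3$ in $S$), and for each pair of them, say $x$ and $y$, let $P_{xy}$ be the $x,y$-path of $S$ through vertices of degree $2$; the six paths $P_{ab},P_{ac},P_{ad},P_{bc},P_{bd},P_{cd}$ are pairwise internally disjoint, and their interiors are disjoint from $\{a,b,c,d\}$.

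The heart of the argument is to show that $S$ must be $K_4$ itself, i.e., that each $P_{xy}$ is a single edge. Suppose not; without loss of generality $P_{ab}$ has an internal vertex. First, $a$ and $b$ are non-adjacent in $G$: otherwise, replacing in $S$ the path $P_{ab}$ by the edge $ab$ (that is, deleting the internal vertices of $P_{ab}$ and adding the edge $ab$) produces a subdivision of $K_4$ in $G$ with strictly fewer vertices, contradicting the minimality of $S$. Second, no two vertices of $G$ separate $a$ from $b$: the three paths $P_{ab}$, $P_{ac}\cup P_{bc}$, and $P_{ad}\cup P_{bd}$ are internally disjoint $a,b$-paths lying in $S$, so any set of vertices avoiding $a$ and $b$ that separates $a$ from $b$ in $S$ must meet each of them and hence has size at least three; and since $S$ is a subgraph of $G$, an $a,b$-separator of size at most two in $G$ would restrict to one of size at most two in $S$, which is impossible. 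But then $a$ and $b$ are two non-adjacent vertices of $G$ that cannot be separated by a set of two vertices, contradicting the fact that $G$ has separability at most~$2$. Hence $S\cong K_4$, so $G$ contains $K_4$ as a subgraph, contradicting the $K_4$-freeness of cyclically orientable graphs. This contradiction proves the lemma.

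I expect the main point requiring care to be the second bullet above, namely the transfer of the non-separability of $a$ and $b$ from the subgraph $S$ to $G$ together with the (easy direction of the) Menger-type lower bound in $S$; the re-routing that shortens $S$ when $a\sim b$ is then routine. An alternative, slightly longer route would avoid the separability characterization and instead use part (2) of Theorem~\ref{thm:CO} together with the observation from the preliminaries that a graph contains a subdivision of $K_4$ if and only if it has a $K_4$-minor, proving by induction on the clique-sum decomposition that pasting along cliques of size at most $2$ preserves $K_4$-minor-freeness; there the clique-sum rerouting of a putative $K_4$-minor model across the small separator would be the chief nuisance, which is why I prefer the separability-based argument above.
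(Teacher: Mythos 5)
Your proof is correct. Note, however, that the paper does not prove this lemma at all: it is imported verbatim as Lemma~2 of Gurvich's paper \cite{MR2370526}, so there is no in-paper argument to compare against. What you have done is derive the cited lemma from the cited characterization (Theorem~\ref{thm:CO}), and the derivation is sound: the minimal subdivision $S$, the shortcut argument showing the branch vertices $a,b$ of a subdivided edge are non-adjacent, the three internally disjoint $a,b$-paths inside $S$ giving non-separability by two vertices (which transfers from the subgraph $S$ to $G$ since every separator of $G$ restricts to one of $S$), and the final clash with $K_4$-freeness all check out. Within the logical architecture of this paper the derivation is perfectly legitimate, since Theorem~\ref{thm:CO} is taken as a black box; the only caveat worth being aware of is that in Gurvich's original development the implication very likely runs the other way (the absence of $K_4$-subdivisions is an ingredient in establishing the structural characterization), so your argument should be presented as a re-derivation from the stated characterization rather than as an independent proof of Gurvich's lemma. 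Your instinct that the separability-based route is cleaner than re-running a clique-sum induction is also right: the Menger-type lower bound you need is only the trivial direction (three disjoint paths force any separator to have size at least three), so nothing delicate is hidden there.
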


A graph $G$ is {\it outerplanar} if it can be drawn in
the plane without edge crossings and with all vertices incident with the outer face. Outerplanar graphs are exactly the $\{K_4,K_{2,3}\}$-minor-free graphs~\cite{Chartrand1967}. The following characterization in terms of forbidden induced minors is an immediate consequence of the characterization in terms of forbidden minors. We denote by $K_{2,3}^+$ the graph obtained from $K_{2,3}$ by adding an edge between the two vertices of degree $3$.

\begin{proposition}\label{prop:outerplanar}
For every graph $G$, the following statements are equivalent:
\begin{enumerate}
  \item $G$ is outerplanar.
  \item $G$ is $\{K_4, K_{2,3}, K_{2,3}^{+}\}$-induced-minor-free.
 \end{enumerate}
\end{proposition}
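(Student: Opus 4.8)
The plan is to establish the two implications separately; the first is routine, and the second reduces, via the forbidden-minor characterization of outerplanar graphs, to understanding how a $K_{2,3}$-minor can sit inside a graph.

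For $(1)\Rightarrow(2)$ I would use that outerplanar graphs form a minor-closed class (they are exactly the $\{K_4,K_{2,3}\}$-minor-free graphs), hence also an induced-minor-closed class. Each of $K_4$, $K_{2,3}$, and $K_{2,3}^+$ is non-outerplanar: the first two are themselves forbidden minors, and $K_{2,3}^+$ contains $K_{2,3}$ as a subgraph and thus as a minor. Therefore no outerplanar graph can contain any of the three as a minor, in particular not as an induced minor.

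For $(2)\Rightarrow(1)$ I would argue the contrapositive. Assume $G$ is not outerplanar; then $G$ has a $K_4$-minor or a $K_{2,3}$-minor. If $G$ has a $K_4$-minor then, since $K_4$ is complete, $G$ has $K_4$ as an induced minor (as noted in the Preliminaries), and we are done. So suppose $G$ has a $K_{2,3}$-minor but no $K_4$-minor, and fix a minor model $\{S_v : v\in V(K_{2,3})\}$, with $\{a,b,c\}$ and $\{x,y\}$ the two sides of $K_{2,3}$. Let $E'$ be the set of non-adjacent pairs $uv$ of $K_{2,3}$ --- necessarily $uv\in\{ab,ac,bc,xy\}$, these being all the non-edges of $K_{2,3}$ --- for which some edge of $G$ joins $S_u$ to $S_v$. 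If $E'=\emptyset$, the model is already an induced minor model of $K_{2,3}$; if $E'=\{xy\}$, the model is an induced minor model of $K_{2,3}^+$ (the extra adjacency being exactly the one between the two degree-$3$ vertices); in either case $G$ is not $\{K_4,K_{2,3},K_{2,3}^+\}$-induced-minor-free, as desired. Otherwise $E'$ meets $\{ab,ac,bc\}$; by symmetry say $ab\in E'$. Contracting each branch set to a single vertex then exhibits a minor of $G$ that contains $K_{2,3}+ab$ as a subgraph, whence $K_{2,3}+ab$ is itself a minor of $G$; contracting in turn the edge $xc$ in $K_{2,3}+ab$ produces $K_4$, so $G$ has a $K_4$-minor --- contradicting our standing assumption. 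This completes the contrapositive.

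The crux --- and the only step beyond bookkeeping --- is this last one: recognizing that a $K_{2,3}$-minor which is not already ``induced'' must arise from a surplus adjacency between two branch sets, and that among the four possible surplus adjacencies the only one that does not already force a $K_4$-minor is the one between the branch sets of the two degree-$3$ vertices, which is precisely the adjacency converting $K_{2,3}$ into $K_{2,3}^+$. The supporting verifications (that $\{ab,ac,bc,xy\}$ exhausts the non-edges of $K_{2,3}$, that $K_{2,3}+ab$ contracts to $K_4$ along $xc$, and that a minor model with no surplus adjacencies between branch sets is literally an induced minor model) are immediate.
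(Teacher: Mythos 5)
Your proof is correct and follows essentially the same route as the paper's: the forward direction uses minor-closedness of outerplanarity, and the reverse direction takes a $K_{2,3}$-minor model and analyzes surplus adjacencies between branch sets, with a surplus edge among the three degree-$2$ branch sets forcing a $K_4$-minor and a surplus edge between the two degree-$3$ branch sets yielding $K_{2,3}^+$. Your case analysis on the set $E'$ of surplus pairs is in fact slightly more explicit than the paper's one-line assertion at the corresponding step, but the underlying idea is identical.
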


\begin{proof}
Since none of the graphs $K_4$, $K_{2,3}$, $K_{2,3}^{+}$ is outerplanar (as it has either a $K_4$ or a $K_{2,3}$ as a minor), the class of outerplanar graphs is contained in the class of  $\{K_4, K_{2,3}, K_{2,3}^{+}\}$-induced-minor-free graphs.
Conversely, we will show that every $\{K_{2,3},K_{2,3}^{+},K_4\}$-induced-minor-free graph $G$ is $\{K_4,K_{2,3}\}$-minor-free (and hence outerplanar). Indeed, suppose that $G$ contains $H\in \{K_4,K_{2,3}\}$ as a minor. If $H = K_4$ then $G$ contains $K_4$ as induced minor, which is impossible. So $H = K_{2,3}$.
Consider a minor model ${\cal S} = \{S_v:v\in V(K_{2,3})\}$ of $K_{2,3}$ in $G$ and let $x$ and $y$ be the two vertices of degree $3$ in $K_{2,3}$. To avoid that ${\cal S}$ is an induced minor model of $K_{2,3}$ in $G$, we infer that $G$ has an edge $\{x,y\}$ for some $x\in S_u$ and $y\in S_v$. This implies that either $K_{2,3}^{+}$ or $K_4$ is an induced minor of $G$, contrary to the assumption.
\end{proof}

\subsection{Preparatory results}

The above results imply the following characterization of cyclically orientable graphs in terms of forbidden induced minors.

\begin{theorem}\label{thm:K4K23-i-m-free}
For every graph $G$, the following statements are equivalent:
\begin{enumerate}
  \item $G$ is cyclically orientable.
  \item $G$ is $\{K_4,K_{2,3}\}$-induced-minor-free.
 \end{enumerate}
\end{theorem}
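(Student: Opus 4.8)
The plan is to prove the two implications separately, leaning heavily on Theorem~\ref{thm:CO} (the equivalence of cyclical orientability with being a $K_4$-free graph of separability at most~$2$) and Theorem~\ref{thm:separability-2} (the forbidden induced minor characterization of separability at most~$2$).

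\medskip
\noindent\textbf{(1) $\Rightarrow$ (2).} Suppose $G$ is cyclically orientable. By Theorem~\ref{thm:CO}, $G$ is $K_4$-free and of separability at most~$2$, so by Theorem~\ref{thm:separability-2} it is $\{K_{2,3},F_{13},F_{14},F_{15}\}$-induced-minor-free. In particular $G$ has no induced minor isomorphic to $K_{2,3}$. It remains to rule out $K_4$ as an induced minor. Since $K_4$ is complete, an induced minor model of $K_4$ in $G$ is in particular a minor model of $K_4$, so $G$ would contain $K_4$ as a minor; equivalently, $G$ would contain a subdivision of $K_4$ as a subgraph (as noted in the preliminaries, since $K_4$ has maximum degree~$3$). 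But Lemma~\ref{lem:CO-K4} states that cyclically orientable graphs contain no subdivision of $K_4$, a contradiction. Hence $G$ is $\{K_4,K_{2,3}\}$-induced-minor-free.

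\medskip
\noindent\textbf{(2) $\Rightarrow$ (1).} Suppose $G$ is $\{K_4,K_{2,3}\}$-induced-minor-free. Being $K_4$-induced-minor-free, $G$ is $K_4$-free (indeed $K_4$-minor-free, since $K_4$ is complete). By Theorem~\ref{thm:CO}, it suffices to show $G$ is of separability at most~$2$, and by Theorem~\ref{thm:separability-2} this amounts to showing $G$ is $\{K_{2,3},F_{13},F_{14},F_{15}\}$-induced-minor-free. We already have no $K_{2,3}$ induced minor by hypothesis, so the task reduces to showing that each of $F_{13},F_{14},F_{15}$ (the three extra graphs in Fig.~\ref{fig:forb}) itself contains $K_4$ or $K_{2,3}$ as an induced minor. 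If $F_i$ contains $H\in\{K_4,K_{2,3}\}$ as an induced minor, then since the induced minor relation is transitive, $G$ would contain $H$ as an induced minor, contradicting (2). So I would go through $F_{13},F_{14},F_{15}$ one by one and exhibit, in each, an explicit partition of (a subset of) its vertices into bags witnessing $K_4$ or $K_{2,3}$ as an induced minor — this is a short finite check once the three graphs are read off from Fig.~\ref{fig:forb}. (Inspecting the figure, each $F_i$ for $i\in\{13,14,15\}$ is readily seen to contract down to $K_4$.) This completes the proof.

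\medskip
\noindent\textbf{Main obstacle.} The only non-formal step is the finite verification that $F_{13}$, $F_{14}$, $F_{15}$ each admit $K_4$ or $K_{2,3}$ as an induced minor; everything else is an assembly of the quoted theorems (Theorems~\ref{thm:CO} and~\ref{thm:separability-2}) together with Lemma~\ref{lem:CO-K4} and the elementary observation relating $K_4$-minors, $K_4$-induced-minors, and $K_4$-subdivisions. One should be a little careful that ``$K_4$-free'' here is used in the sense of having no $K_4$ subgraph, which coincides with $K_4$-minor-free and $K_4$-induced-minor-free precisely because $K_4$ is complete — a point already recorded in the preliminaries — so the direction $\text{(2)}\Rightarrow\text{(1)}$ correctly extracts ``$K_4$-free'' from ``$K_4$-induced-minor-free''.
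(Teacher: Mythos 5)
Your proof is correct and uses the same ingredients as the paper's own proof (Theorems~\ref{thm:CO} and~\ref{thm:separability-2}, Lemma~\ref{lem:CO-K4}, and the finite check that $F_{13},F_{14},F_{15}$ each contract to $K_4$); the direction $2\Rightarrow 1$ is essentially identical to the paper's argument. The only difference is in $1\Rightarrow 2$: the paper first establishes that the class of cyclically orientable graphs is closed under induced minors and then observes that $K_4$ and $K_{2,3}$ are not cyclically orientable, whereas you derive the two exclusions directly from the same cited results (separability for $K_{2,3}$, Lemma~\ref{lem:CO-K4} plus the $K_4$-minor/induced-minor/subdivision equivalences for $K_4$) --- a legitimate and slightly more economical packaging of the same facts.
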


\begin{proof}
We first argue that the class of cyclically orientable graphs is closed under induced minors.
It is clearly closed under vertex deletions. To see that it is also closed under edge contractions, recall that
by Theorem~\ref{thm:CO}
$G$ is cyclically orientable if and only if $G$ is a $K_4$-free graph of separability at most $2$.
Since the class of graphs of separability at most $2$ is closed under induced minors (cf.~Theorem~\ref{thm:separability-2}),
contracting an edge of a cyclically orientable graph $G$ results in a graph $G'$ of separability at most $2$.
By Lemma~\ref{lem:CO-K4}, $G$ does not contain any subdivision of $K_4$ (as a subgraph), which is equivalent to
the fact that $G$ does not contain $K_4$ as a minor. Since contracting an edge cannot produce a $K_4$ minor,
graph $G'$ has no $K_4$ minor, in particular, it is $K_4$-free. Thus, $G'$ is cyclically orientable by Theorem~\ref{thm:CO}.

Since the class of cyclically orientable graphs is closed under induced minors and
the graphs $K_4$ and $K_{2,3}$ are not cyclically orientable, the implication $1\Rightarrow 2$ follows.
Suppose now that $G$ is a $\{K_4,K_{2,3}\}$-induced-minor-free graph.
Since each of the graphs in the set $\{F_{13}, F_{14}, F_{15}\}$  (cf.~Fig.~\ref{fig:forb})
can be contracted to a $K_4$, the class of $\{K_4,K_{2,3}\}$-induced-minor-free graphs is a subclass of the class of
$\{K_{2,3},F_{13}, F_{14}, F_{15}\}$-induced-minor-free graphs.
It follows from Theorem~\ref{thm:separability-2} that
every $\{K_4,K_{2,3}\}$-induced-minor-free graph is a $K_4$-free graph of separability at most $2$.
The implication $2\Rightarrow 1$ now follows from Theorem~\ref{thm:CO}.
\end{proof}

We conclude this section with two lemmas, one regarding sinks in $1$-perfect orientations of connected graphs
and one characterizing $1$-perfect orientations of trees.

\begin{lemma}\label{lem:sink}
Every $1$-perfect orientation of a connected graph has at most one sink.
\end{lemma}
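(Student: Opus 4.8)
The plan is to argue by contradiction: assume a $1$-perfect orientation $D$ of a connected graph $G$ has two distinct sinks $u$ and $v$, and obtain a contradiction by inspecting a shortest $u$--$v$ path. The intuition is that a sink forces the incident edges of the path to point ``inward'' at its endpoints, so somewhere in the middle there must be a vertex from which two path-edges point outward, and then the clique condition on its out-neighborhood produces a forbidden chord.

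First I would fix a shortest path $P = x_0 x_1 \cdots x_k$ in $G$ with $x_0 = u$ and $x_k = v$; such a path exists because $G$ is connected, and it is chordless, since any chord would give a shorter $u$--$v$ path. For each $j \in \{1,\ldots,k\}$ I record the orientation of the edge $e_j = \{x_{j-1},x_j\}$ by a sign $\sigma_j$, setting $\sigma_j = +1$ if $e_j$ is directed $x_{j-1}\to x_j$ and $\sigma_j = -1$ if it is directed $x_j\to x_{j-1}$.

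Next I extract two constraints from the hypothesis that $u$ and $v$ are sinks. Since $u = x_0$ has out-degree zero, the edge $e_1$ cannot leave $x_0$, so $\sigma_1 = -1$; symmetrically, since $v = x_k$ is a sink, $\sigma_k = +1$. In particular $k \ge 2$, because for $k=1$ these two conditions contradict each other outright (which already disposes of the case of adjacent sinks). As the sign sequence begins with $-1$ and ends with $+1$, there is an index $i$ with $1 \le i \le k-1$ such that $\sigma_i = -1$ and $\sigma_{i+1} = +1$; that is, both $e_i$ and $e_{i+1}$ are directed out of $x_i$, so $x_{i-1}$ and $x_{i+1}$ both lie in the out-neighborhood of $x_i$ in $D$.

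Finally I would apply the defining property of a $1$-perfect orientation: the out-neighborhood of $x_i$ induces a clique in $G$, hence $x_{i-1}$ and $x_{i+1}$ are adjacent. Then $x_0 \cdots x_{i-1} x_{i+1} \cdots x_k$ is a $u$--$v$ walk of length $k-1$, contradicting the minimality of $P$; therefore $D$ has at most one sink. I do not expect a real obstacle here: the only points needing care are the short-path degeneracies ($k=1$ and $k=2$), which the sign bookkeeping handles uniformly, and the fact that a shortest path is chordless, which is precisely what makes the last step a genuine contradiction.
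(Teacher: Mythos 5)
Your proof is correct and follows essentially the same route as the paper's: both take a shortest path between the two putative sinks, locate an internal vertex whose two incident path-edges are both directed outward (the paper does this by choosing the last backward-oriented edge, which is your sign-change index), and then use the clique condition on that vertex's out-neighborhood to produce a chord contradicting minimality of the path.
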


\begin{proof}
Let $D$ be a $1$-perfect orientation of a connected graph $G$ with two sinks $x$ and $y$.
Suppose for a contradiction that $x\neq y$.
Let $P = (x = v_0,v_1,\ldots, v_k = y)$ be a shortest $x$,$y$-path in $G$.
Since $v_0 = x$ is a sink, the edge $\{v_0,v_1\}$ is oriented as $v_1\to v_0$ in $D$.
This implies that there is a unique maximum index $j\in \{1,\ldots, k\}$ such that $(v_{j},v_{j-1})$ is an arc of $D$.
Since $y$ is a sink, the edge $\{v_{k-1},v_k\}$ is oriented as $v_{k-1}\to v_k$ in $D$, which implies $j<k$.
The definition of $j$ implies that $(v_{j+1},v_{j})$ is not an arc of $D$. Hence
$(v_{j},v_{j+1})$ is an arc of $D$. Since the out-neighborhood of $v_j$ in $D$ is a clique in $G$,
vertices $v_{j-1}$ and $v_{j+1}$ are adjacent, contradicting the minimality of $P$.
\end{proof}

An {\it in-tree} is a directed graph $D$ that has a vertex $r$ called the {\it root} such that for every vertex $v\in V(D)$,
there is exactly one directed path from $v$ to $r$. Equivalently, an in-tree is a directed rooted tree in which
all arcs point towards the root (that is, for every edge $\{x,y\}$ of the underlying undirected tree $T$, we have that $(x,y)$ is an arc of $D$
if and only if $d_T(x,r) < d_T(y,r)$). It is easy to see in every in-tree $D$, every vertex $v\in V(D)$ satisfies $d^+_D(v) = 1$, except for the
root $r$, which is a sink.

\begin{lemma}\label{lem:tree}
Let $T$ be a tree and let $D$ be an orientation of $T$.
Then, $D$ is $1$-perfect if and only if $D$ is an in-tree.
Moreover, for every vertex $r\in V(T)$ there exists a $1$-perfect orientation $D$
of $T$ such that $r$ is the root of the in-tree $D$.
\end{lemma}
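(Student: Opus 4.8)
The plan is to prove the two implications separately and then the final sentence. For the easy direction, I would simply invoke the remark preceding the lemma: in any in-tree $D$ every vertex has out-degree exactly $1$ except the root $r$, for which $d^+_D(r)=0$. Since a set of size at most $1$ is vacuously a clique in $T$, the out-neighborhood of every vertex of $D$ is a clique, and hence the in-tree orientation is $1$-perfect.

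For the converse I would start from the observation that a $1$-perfect orientation $D$ of $T$ satisfies $d^+_D(v)\le 1$ for every $v$: if $v$ had two distinct out-neighbors $u,w$, then $\{u,w\}$ would have to be a clique, i.e.\ an edge of $T$, and then $u,v,w$ would induce a triangle in $T$, contradicting that $T$ is a tree. Next I would count out-degrees globally, using that each edge of $T$ contributes exactly $1$ to the sum: $\sum_{v\in V(T)}d^+_D(v)=|E(T)|=|V(T)|-1$. Together with $d^+_D(v)\le 1$ for all $v$, this forces a unique vertex $r$ with $d^+_D(r)=0$ and $d^+_D(v)=1$ for every $v\ne r$. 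To conclude that $D$ is an in-tree rooted at $r$, I would take an arbitrary $v\ne r$ and repeatedly follow the (unique) out-arc; the resulting directed walk cannot repeat a vertex, since a repeated vertex would produce a directed cycle in $D$ and hence an undirected cycle in $T$, so the walk is a directed path and, being finite, terminates at the unique sink $r$. Uniqueness of the directed $v,r$-path is immediate, because the outgoing arc at each vertex different from $r$ is unique, so the path is forced one step at a time.

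For the \emph{moreover} part, given $r\in V(T)$ I would orient each edge $\{x,y\}$ of $T$ from whichever endpoint is farther from $r$ toward the one closer to $r$; this is well defined because the two endpoints of an edge of a tree are at distances from $r$ differing by exactly $1$. By construction this orientation is precisely the in-tree rooted at $r$ described in the remark before the lemma (each non-root vertex points to its unique neighbor closer to $r$, and $r$ is a sink), so by the first implication it is a $1$-perfect orientation of $T$, and its root is $r$, as required.

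I do not expect a genuine obstacle: the only step that needs a moment's care is the implication that $1$-perfectness forces out-degree at most $1$, which uses nothing more than the triangle-freeness of trees; after that the result follows from counting edges together with the acyclicity of $T$.
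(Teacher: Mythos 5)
Your proof is correct and follows essentially the same route as the paper's: out-degree at most one, an edge-counting argument to locate the sink, and following out-arcs to reach the root. The only cosmetic difference is that your degree-sum count $\sum_v d^+_D(v)=|V(T)|-1$ yields both existence and uniqueness of the sink directly, whereas the paper gets uniqueness from Lemma~\ref{lem:sink} and existence from the same count; both are fine.
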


\begin{proof}
If $D$ is an in-tree then the fact that $D$ is a $1$-perfect orientation of $T$ follows from the fact that
every vertex $v\in V(T)$ satisfies $d^+_D(v) \le 1$.

Suppose now that $D$ is a $1$-perfect orientation of $T$. Then $d^+_D(v) \le 1$ for all vertices $v\in V(T)$.
By Lemma~\ref{lem:sink}, $D$ has at most one sink.
If $D$ does not have any sink, then for every vertex $v$ of $T$, we have $d^{+}_{D}(v)= 1$,
which implies that the total number of arcs in $D$ equals $|V(D)| = |V(T)|$, contrary to the fact that $T$ is acyclic.
It follows that $D$ has a unique sink, say $r$. We claim that $D$ is an in-tree with root $r$, that is,
for every vertex $v\in V(D)$ there is exactly one directed path from $v$ to $r$.
Since $D$ is an orientation of a tree, for every vertex $v\in V(T)$
there is at most one directed path from $v$ to $r$. Clearly, for $v = r$ there is a unique $v$,$r$-directed path.
Since every vertex $v$ that is not the root has a unique out-neighbor and $D$ has no directed cycles, any maximal path from $v$
ends in the root.

The last statement of the lemma is immediate since, given a vertex $r\in V(T)$,
orienting all edges of $T$ towards $r$ results in an in-tree with root $r$.
\end{proof}

%------------------------------------------------------------------------------------------------------------------------------------

\section{Reduction to the biconnected case}
\label{sec:biconnected}
Since a graph is $1$-perfectly orientable~if and only if each component of $G$ is $1$-perfectly orientable, in the study of $1$-perfectly orientable~graphs we may restrict our attention to connected graphs. In this section, we analyze to what extent the study of $1$-perfectly orientable~graphs can be reduced to the
biconnected case. It turns out that biconnectivity comes at a price: the study of slightly more general structures is required, namely of pairs $(G,v)$ where $G$ is a biconnected $1$-perfectly orientable~graph having a $1$-perfect orientation $D$ such that $v$ is a sink in $D$.

\begin{definition}
A {\it rooted graph} is a pair $(G,v)$, denoted also by $G^v$, such that $G$ is a graph and $v\in V(G)$.
A rooted graph $G^v$ is said to be {\it connected} (resp., biconnected) if $G$ is connected (resp., biconnected), and
{\it $1$-perfectly orientable} if $G$ has a $1$-perfect orientation in which $v$ is a sink.
\end{definition}

We summarize the reduction to biconnected rooted graphs in the following theorem. Given a tree $T$ and a vertex $r\in V(T)$, the $1$-perfect orientation of $T$ in which $r$ is the unique sink (cf.~Lemma~\ref{lem:tree}) will be referred to as the {\it $r$-rooted orientation of $T$}.

\begin{sloppypar}
\begin{theorem}\label{thm:biconnected-reduction}
Let $G$ be a connected graph with a cut vertex, let ${\cal B}$ and $C$ be the sets of blocks and
cut vertices of $G$, respectively, and let $T$ be the block tree of $G$. Then, $G$ is $1$-perfectly orientable~if and only if one of the following conditions holds:
\begin{enumerate}
  \item There exists a block $B_r$ of $G$ such that $B_r$ is $1$-perfectly orientable~and
  for every arc \hbox{$(B,v)\in {\cal B}\times C$} of the $B_r$-rooted orientation of $T$, the rooted graph $B^v$ is $1$-perfectly orientable.
  \item There exists a cut vertex $v_r$ of $G$ such that
  for every arc $(B,v)\in {\cal B}\times C$ of the $v_r$-rooted orientation of $T$, the rooted graph $B^v$ is $1$-perfectly orientable.
  \end{enumerate}
\end{theorem}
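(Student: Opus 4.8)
The plan is to prove both directions by relating a $1$-perfect orientation $D$ of $G$ to the block tree $T$. I would start from three simple observations. First, every block $B$ is an induced subgraph of $G$ (adding any further edge inside $V(B)$ would contradict the maximality of $B$), so $D[V(B)]$ is a $1$-perfect orientation of the connected graph $B$ and hence has at most one sink by Lemma~\ref{lem:sink}. Second, if $v$ is a cut vertex, then $N_D^+(v)\subseteq V(B)$ for a single block $B$: any two distinct out-neighbors of $v$ are adjacent (the out-neighborhood is a clique) and so, together with $v$, span a triangle, which lies in one block; since two blocks share at most one vertex, all of $N_D^+(v)$ lies in that same block. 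Consequently, for every block $B$ containing $v$, with at most one exception, $v$ is a sink of $D[V(B)]$.

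For the forward implication I would, given $D$, define an orientation $\vec T$ of $T$ by directing an edge $\{B,v\}$ of $T$ (with $B\in\mathcal B$, $v\in C$) as $B\to v$ if $v$ is a sink of $D[V(B)]$, and as $v\to B$ otherwise. By the observations above, in $\vec T$ every cut vertex has out-degree at most $1$, since its $D$-out-neighborhood is confined to a single block; and every block $B$ has out-degree at most $1$, since a cut vertex $v\in V(B)$ gets $B\to v$ precisely when it is a sink of $D[V(B)]$, and $D[V(B)]$ has at most one sink. Thus $\vec T$ is a $1$-perfect orientation of the tree $T$, so by Lemma~\ref{lem:tree} it is an in-tree with a unique root $r$, and it coincides with the $r$-rooted orientation of $T$. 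If $r$ is a block $B_r$, then for every arc $(B,v)\in\mathcal B\times C$ of the $B_r$-rooted orientation of $T$ we have $B\to v$ in $\vec T$, so $v$ is a sink of $D[V(B)]$ and $B^v$ is $1$-perfectly orientable; together with $B_r$ being $1$-perfectly orientable (via $D[V(B_r)]$), this is condition~$1$. If $r$ is a cut vertex $v_r$, reading the arcs of $\vec T$ the same way yields condition~$2$.

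For the converse I would reverse this construction. Assume condition~$1$ holds; fix any $1$-perfect orientation $D_{B_r}$ of $B_r$ and, for every block $B\neq B_r$ with parent cut vertex $v$ in the $B_r$-rooted orientation of $T$, a $1$-perfect orientation $D_B$ of $B$ in which $v$ is a sink. Orient $G$ by giving each edge the orientation it has in the unique block containing it, obtaining an orientation $D$. To see that $D$ is $1$-perfect, take $u\in V(G)$: if $u$ is not a cut vertex it lies in a unique block $B$ and $N_D^+(u)=N_{D_B}^+(u)$ is a clique; if $u$ is a cut vertex, exactly one block $B_0$ containing $u$ is the parent of $u$ in the $B_r$-rooted orientation of $T$, while every other block $B'$ containing $u$ is a child of $u$, so the arc $(B',u)$ forces $u$ to be a sink of $D_{B'}$, whence $N_D^+(u)=N_{D_{B_0}}^+(u)$ is a clique. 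If instead condition~$2$ holds the argument is the same with $v_r$ as the root of $T$: now every block has a parent cut vertex, every block is oriented so that its parent is a sink, and $v_r$ becomes a sink of the resulting orientation $D$. In either case $D$ witnesses that $G$ is $1$-perfectly orientable.

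The crux is the forward direction, and within it the two structural observations: that a cut vertex cannot split its out-neighborhood across several blocks, and that Lemma~\ref{lem:sink} caps at one the number of sinks of $D$ restricted to a block. These are precisely what make $\vec T$ an orientation of $T$ with all out-degrees at most $1$; after that, recognizing $\vec T$ as an in-tree via Lemma~\ref{lem:tree} and reading off conditions~$1$ and~$2$ from the position of its root is bookkeeping, as is the gluing in the converse, where the only delicate point is that cut vertices behave correctly, which the parent/child dichotomy in the rooted block tree guarantees.
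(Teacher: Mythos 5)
Your proposal is correct and follows essentially the same route as the paper: orient the block tree by declaring $B\to v$ when $v$ is a sink of $D[V(B)]$, use Lemma~\ref{lem:sink} and the clique property of out-neighborhoods to bound all out-degrees by one, invoke Lemma~\ref{lem:tree} to locate the root, and glue block orientations back together for the converse. The only cosmetic difference is that you justify the cut-vertex out-degree bound via a triangle lying in a single block, whereas the paper argues directly that two out-neighbors in distinct blocks would be non-adjacent; both amount to the same observation.
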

\end{sloppypar}

\begin{proof}
{\it Necessity.}
Suppose first that $G$ is $1$-perfectly orientable, and let $D$ be a $1$-perfect orientation of $G$.
Consider the orientation $T_D$ of the block tree $T$ defined by orienting any edge $\{v,B\}$ of $T$
(with $v\in C$ and $B\in {\cal B}$) as $B\to v$ if and only if
$v$ is a sink in the subgraph of $D$ induced by $V(B)$.

We claim that for every node $x$ of the block tree $T$, we have $d^{+}_{T_D}(x)\le 1$.
If $x=B$ is a block of $G$, then the inequality $d^{+}_{T_D}(B)\le 1$ follows from
Lemma~\ref{lem:sink}. So let $x = v$ be a cut vertex of $G$ and suppose
for a contradiction that $d^+_{D_T}(v)\ge 2$. Then there exist two blocks $B$ and $B'$ of $G$ containing $v$
such that $v$ is not a sink in the subgraph of $D$ induced by $X$ for any $X\in \{V(B),V(B')\}$.
It follows that the out-neighborhood of $v$ in $D$ contains a vertex from $V(B)\setminus\{v\}$ and
a vertex from $V(B')\setminus\{v\}$. As these two vertices are not adjacent in $G$, this contradicts the fact that $D$ is $1$-perfect.

Since every node of $T_D$ is of out-degree at most $1$, $T_D$ is a $1$-perfect orientation of $T$. Lemma~\ref{lem:tree} implies that orientation $T_D$ is an in-tree. As there are two types of nodes in $T$, the blocks of $G$ and the cut vertices of $G$, the unique sink of $T_D$ can be either a block of $G$ or a cut vertex.
Suppose first that the unique sink of $T_D$ is a block of $G$, say $B_r$. Then $T_D$ is the $B_r$-rooted orientation of $T$.
Since the subgraph of $D$ induced by $V(B_r)$ is a $1$-perfect orientation of $B_r$,
it follows that $B_r$ is $1$-perfectly orientable. Moreover, for every arc $(B,v)\in {\cal B}\times C$ of $T_D$,
the subgraph of $D$ induced by $V(B)$ is $1$-perfect orientation of $B$ in which $v\in V(B)$ is a sink,
which implies that $B^v$ is $1$-perfectly orientable. Thus, condition 1 holds in this case.
A similar argument shows that condition 2 holds if the unique sink of $T_D$ is a cut vertex of $G$.

\medskip
{\it Sufficiency.} We now show that each of the two conditions is sufficient for $G$ to be $1$-perfectly orientable.

First, suppose that condition 1 holds, that is, there exists a block $B_r$ of $G$ such that $B_r$ is $1$-perfectly orientable~and
for every arc $(B,v)\in {\cal B}\times C$ of the $B_r$-rooted orientation of $T$, the rooted graph $B^v$ is $1$-perfectly orientable.
Fix a $1$-perfect orientation $D_{B_r}$ of $B_r$ and, for every arc $(B,v)\in {\cal B}\times C$
of the $B_r$-rooted orientation of $T$,
fix a $1$-perfect orientation $D_B$ of $B$ in which $v$ is a sink.
Note that since every block $B\neq B_r$ is of out-degree $1$ in the $B_r$-rooted orientation of $T$,
each of the blocks of $G$ is oriented by exactly one of the above $|{\cal B}|$ orientations. Since
each edge of $G$ lies in a unique block of $G$, combining the above
orientations defines a unique orientation of $G$, say~$D$.
We claim that $D$ is a $1$-perfect orientation of $G$.
Every vertex $v\in V(G)$ that is not a cut vertex belongs to a unique block, say $B$, and therefore
$N^+_D(v) = N^+_{D_{B}}(v)$. Since $D_{B}$ is a $1$-perfect orientation of $B$, the set
$N^+_{D_{B}}(v)$ is a clique in $B$, and hence also a clique in $G$.
If $v\in V(G)$ is a cut vertex, then there is a unique block $B$ of $G$ such that $(v,B)$ is an arc of the $B_r$-rooted orientation of $T$,
which means that for every block $B'$ containing $v$ other than $B$, the vertex $v$ is a sink in $D_{B'}$.
Again we obtain that $N^+_D(v) = N^+_{D_{B}}(v)$, hence this set is a clique in $G$.
This shows that $D$ is a $1$-perfect orientation of $G$, showing that $G$ is $1$-perfectly orientable.

The proof in the case when condition 2 holds is very similar.
Suppose that there is a cut vertex $v_r$ of $G$ such that
for every arc $(B,v)\in {\cal B}\times C$ of the $v_r$-rooted orientation of $T$, the rooted graph $B^v$ is $1$-perfectly orientable.
For every such arc $(B,v)$, fix a $1$-perfect orientation $D_B$ of $B$ in which $v$ is a sink.
In this case, every block $B$ of $G$ is of out-degree $1$ in the $v_r$-rooted orientation of $T$ and
combining the above $|{\cal B}|$ orientations defines a unique orientation of $G$, say $D$.
Arguments analogous to those in  the above paragraph show that $D$ is a $1$-perfect orientation of $G$,
hence $G$ is $1$-perfectly orientable in this case too. This completes the proof.
\end{proof}

As an immediate consequence of the previous theorem, we obtain the following side result: a characterization of $1$-perfectly orientable block-cactus graphs. A \emph{block-cactus graphs} is a graph such that all its blocks are either cycles or complete graphs.
A {\it rooted extension of a graph $G$} is a rooted graph $G^v$ for any $v\in V(G)$.

\begin{corollary}
A connected block-cactus graph $G$ is $1$-perfectly orientable if and only if at most one block of $G$ is a cycle of length at least four.
\end{corollary}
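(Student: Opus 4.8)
The plan is to derive the corollary directly from Theorem~\ref{thm:biconnected-reduction}, after first recording two elementary facts about the $1$-perfect orientations of the two kinds of blocks that occur in a block-cactus graph.

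\textbf{Step 1 (the building blocks).} I would first observe that every complete graph $K$ is $1$-perfectly orientable and, moreover, that for every vertex $v\in V(K)$ the rooted graph $K^v$ is $1$-perfectly orientable: orient all edges incident with $v$ towards $v$ and orient all remaining edges arbitrarily; since every out-neighborhood is contained in the clique $V(K)$, the orientation is $1$-perfect and $v$ is a sink. On the other hand, a cycle $C_n$ with $n\ge 4$ is $1$-perfectly orientable (orient it cyclically), but no rooted extension $C_n^v$ is $1$-perfectly orientable: by Lemma~\ref{lem:cycles}, in every $1$-perfect orientation of $C_n$ the cycle is oriented cyclically and hence has no sink. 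Since a cycle of length $3$ is a complete graph, I conclude: for a block $B$ of a block-cactus graph and any $v\in V(B)$, the rooted graph $B^v$ is $1$-perfectly orientable if and only if $B$ is complete, i.e.\ if and only if $B$ is not a cycle of length at least four; and in any case $B$ itself is $1$-perfectly orientable.

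\textbf{Step 2 (assembling via the block tree).} If $G$ is biconnected it is a single block, hence a cycle or a complete graph, hence $1$-perfectly orientable, and ``at most one block is a cycle of length at least four'' holds vacuously, so the equivalence is clear. If $G$ has a cut vertex I apply Theorem~\ref{thm:biconnected-reduction}. Using Step 1, condition~1 of that theorem becomes: there is a block $B_r$ (automatically $1$-perfectly orientable) such that for the $B_r$-rooted orientation of the block tree $T$, every arc $(B,v)$ has $B$ complete; since in this in-tree every node other than $B_r$ has out-degree exactly one, the arcs $(B,v)$ range over all blocks $B\ne B_r$, each exactly once, so condition~1 says precisely that at most one block of $G$ is not complete. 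Similarly, condition~2 becomes: there is a cut vertex $v_r$ (which exists, since $G$ has a cut vertex) such that in the $v_r$-rooted orientation of $T$ every arc $(B,v)$ has $B$ complete; here every block has out-degree one, so condition~2 says precisely that no block of $G$ is not complete. Hence $G$ is $1$-perfectly orientable iff at most one block is non-complete \emph{or} no block is non-complete, that is, iff at most one block is non-complete, i.e.\ at most one block is a cycle of length at least four.

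The only point requiring a little care — and the closest thing to an obstacle — is the bookkeeping on the block tree: one must invoke that an in-tree rooted at a block $B_r$ has every other node of out-degree exactly one (so that condition~1 genuinely quantifies over all blocks $\ne B_r$), and that when the in-tree is rooted at a cut vertex every block has out-degree one; the existence of a cut vertex to root at in condition~2 is exactly the standing hypothesis of that case. Once these are in place, the corollary follows by straightforward substitution into Theorem~\ref{thm:biconnected-reduction}.
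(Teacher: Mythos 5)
Your proposal is correct and follows essentially the same route as the paper: classify the blocks (complete blocks have all rooted extensions $1$-perfectly orientable, cycles of length at least four have none, by Lemma~\ref{lem:cycles}), then substitute into the two conditions of Theorem~\ref{thm:biconnected-reduction} to see they reduce to ``at most one non-complete block'' and ``no non-complete block'' respectively. The extra bookkeeping you supply about out-degrees in the rooted block tree is exactly the (implicit) justification the paper relies on.
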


\begin{proof}
Let $G$ be a connected block-cactus graph. Then each block of $G$ is a cycle or a complete graph, hence $1$-perfectly orientable.
If $G$ is $2$-connected, then $G$ has only one block and the condition from the corollary is trivially satisfied.
Suppose now that $G$ has a cut-vertex. Graph $G$ has blocks of two types: blocks that are complete -- for which every rooted extension is $1$-perfectly orientable -- and blocks that are not complete -- which are cycles of length at least four, and for which, by Lemma~\ref{lem:cycles}, no rooted extension is \hbox{$1$-perfectly orientable}. The two conditions from Theorem~\ref{thm:biconnected-reduction} are now easily seen to be equivalent to the
following two conditions, respectively: (1) there exists a block $B$ of $G$ such that all blocks of $G$ other than $B$ are complete, and (2)
all blocks of $G$ are complete. Clearly, at least one of these two conditions holds if and only if at most one block of $G$ is a cycle of length at least four.
\end{proof}

We also prove a lemma on chordal graphs for later use.

\begin{lemma}\label{lem:chordal}
Every rooted extension of a chordal graph is $1$-perfectly orientable.
\end{lemma}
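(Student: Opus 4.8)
The plan is to reduce the statement to a property of perfect elimination orderings. Given a chordal graph $G$ and a vertex $v\in V(G)$, I would first show that $G$ admits a perfect elimination ordering $(v_1,\dots,v_n)$ in which $v$ is the \emph{last} vertex, i.e.\ $v_n=v$. Granted such an ordering, I would orient every edge $\{v_i,v_j\}$ with $i<j$ from $v_i$ to $v_j$; then the out-neighborhood of $v_i$ in the resulting digraph $D$ is precisely the set of neighbors of $v_i$ occurring after $v_i$ in the ordering, which is a clique by the defining property of a perfect elimination ordering. Hence $D$ is $1$-perfect, and since $v_n=v$ has out-degree $0$, the vertex $v$ is a sink in $D$, so $G^v$ is $1$-perfectly orientable.

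To produce a perfect elimination ordering ending at the prescribed vertex $v$, I would argue by induction on $|V(G)|$. The base case $|V(G)|=1$ is trivial. For the inductive step: if $G$ is complete, any ordering of $V(G)$ with $v$ last is a perfect elimination ordering; otherwise, using the classical fact that a non-complete chordal graph has at least two non-adjacent simplicial vertices, pick a simplicial vertex $u\neq v$. The graph $G-u$ is again chordal, so by the induction hypothesis it has a perfect elimination ordering ending at $v$; prepending $u$, which is simplicial in $G$, yields the desired ordering of $G$. (Alternatively, one may invoke the standard fact that running a maximum cardinality search, or LexBFS, on a chordal graph from the vertex $v$ produces an ordering whose reverse is a perfect elimination ordering with $v$ last.)

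The verification that the ``lower-to-higher'' orientation is $1$-perfect is routine, being essentially the same argument that shows every chordal graph is $1$-perfectly orientable; the only substantive point is the existence of a perfect elimination ordering terminating at $v$, which is the crux of the proof, although it is not hard. I note that this lemma refines the last statement of Lemma~\ref{lem:tree} (trees being chordal): a perfect elimination ordering of a tree ending at $r$ orients each edge towards the neighbor closer to $r$, recovering exactly the $r$-rooted in-tree orientation.
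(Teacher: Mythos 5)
Your proof is correct and follows essentially the same route as the paper's: both construct a perfect elimination ordering in which the prescribed root $v$ is eliminated last (using Dirac's theorem that a non-complete chordal graph has two non-adjacent simplicial vertices to always find a simplicial vertex other than $v$), and then orient each edge toward the later endpoint so that out-neighborhoods are cliques and $v$ is a sink. The only difference is cosmetic: the paper places $v$ first in the ordering and orients edges toward earlier vertices, which is the same construction under reversal.
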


\begin{proof}
Let $G$ be a chordal graph and $v\in V(G)$. Since $G$ is chordal, it has a perfect elimination ordering, that is, a linear ordering $\sigma= (v_1,\ldots, v_n)$ of the vertices of $G$ such that for all \hbox{$i\in \{1,\ldots, n\}$}, vertex $v_i$ is a simplicial vertex in the subgraph of $G$ induced by $\{v_1,\ldots, v_n\}$. Moreover,
the perfect elimination orderings of $G$ are exactly the sequences of the form $(\sigma',v_n)$ where $v_n$ is a simplicial vertex of $G$ and $\sigma'$ is a perfect elimination ordering of $G-v_n$.

We claim that $G$ has a perfect elimination ordering $\sigma= (v_1,\ldots, v_n)$ such that $v = v_1$. As observed already by Dirac~\cite{MR0130190},
every minimal separator in a chordal graph is a clique, which implies that every chordal graph is either complete or has a pair of non-adjacent simplicial vertices.
It follows that every chordal graph with at least two vertices has a pair of perfect elimination orderings $\sigma = (u_1,\ldots, u_n)$ and
$\sigma' = (u_1',\ldots, u_n')$ such that $u_n\neq u_n'$. In particular, one can construct
a perfect elimination ordering $(v_1,\ldots, v_n)$ of $G$ by iteratively deleting simplicial vertices
(and at the end reversing the order of deleted vertices) so that vertex $v$ is deleted only at the very end, that is, so that $v = v_1$, as claimed.

Let $\sigma= (v_1,\ldots, v_n)$ be a perfect elimination ordering of $G$ such that $v = v_1$.
Orienting the edges of $G$ as $v_i\to v_j$ if and only if $i>j$ result in a $1$-perfect orientation of $G$ in which $v$ is a sink, showing that
$G^v$ is $1$-perfectly orientable.
\end{proof}

%------------------------------------------------------------------------------------------------------------------------------------

\section{Hollowed $2$-trees}
\label{sec:hollowed}

It is well known that trees can be constructed recursively as follows:
(i) $K_1$ is a tree, (ii) a graph obtained from a tree by adding to it a vertex of degree $1$ is a tree, and (iii) there are no other trees.
The class of {\it $2$-trees} is defined in a similar way: (i) $K_2$ is a $2$-tree, (ii) a graph obtained from a $2$-tree by adding to it a simplicial vertex of degree $2$ is a $2$-tree, and (iii) there are no other $2$-trees. We now consider the following extension of the notion of $2$-trees.

\begin{definition}
A \emph{hollowed $2$-tree} is defined as follows: (i) any cycle of length at least four is a hollowed $2$-tree, (ii) a graph obtained from a hollowed $2$-tree by adding to it a simplicial vertex of degree $2$ is a hollowed $2$-tree,
and (iii) there are no other hollowed $2$-trees.
\end{definition}

The name of this graph class relates to the fact that a {\it hole} in a graph $G$ often refers to an induced cycle of length at least four in $G$.
Every hollowed $2$-tree has a unique hole (and, in particular, is not a 2-tree).

Note that all $2$-trees and all hollowed $2$-trees are biconnected. They will play an important role in our
characterization of $1$-perfectly orientable $K_4$-minor-free graphs (Theorem~\ref{thm:k4-minor-free})
and in its reduction to the biconnected case.

We first note some properties of $1$-perfect orientations of $2$-trees and of hollowed $2$-trees.

\begin{lemma}\label{lem:1-perfect-orientations-2-trees}
All $2$-trees and their rooted extensions are $1$-perfectly orientable.
Every hollowed $2$-tree is $1$-perfectly orientable, however, all its $1$-perfect orientations are sink-free. (That is, no rooted extension of a hollowed $2$-tree is $1$-perfectly orientable.)
\end{lemma}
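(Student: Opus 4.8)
The plan is to prove the three assertions in Lemma~\ref{lem:1-perfect-orientations-2-trees} separately, the first two by induction on the number of vertices following the recursive construction of $2$-trees, and the third by a structural argument about the unique hole of a hollowed $2$-tree.

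\textbf{$2$-trees and their rooted extensions.} Both $2$-trees and (for the rooted version) $1$-perfectly orientable rooted extensions follow immediately from Lemma~\ref{lem:chordal} together with the observation that every $2$-tree is chordal: indeed, the recursive construction produces exactly a perfect elimination ordering (reverse the order in which the simplicial degree-$2$ vertices were added, then the base $K_2$), so chordality holds, and then Lemma~\ref{lem:chordal} gives that every rooted extension of a $2$-tree is $1$-perfectly orientable. This handles the first sentence with essentially no work.

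\textbf{Hollowed $2$-trees are $1$-perfectly orientable.} I would argue by induction on the construction. For the base case, a cycle $C$ of length at least four has a cyclic orientation, which is a $1$-perfect orientation (each out-neighborhood is a single vertex, hence a clique). For the inductive step, suppose $G$ is obtained from a hollowed $2$-tree $G'$ by adding a simplicial vertex $w$ of degree $2$, adjacent to an edge $\{x,y\}$ of $G'$. By induction $G'$ has a $1$-perfect orientation $D'$; extend it to $G$ by orienting both $w\to x$ and $w\to y$. Since $\{x,y\}$ is an edge, the out-neighborhood $\{x,y\}$ of $w$ is a clique; all other out-neighborhoods are unchanged. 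So $G$ is $1$-perfectly orientable.

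\textbf{Every $1$-perfect orientation of a hollowed $2$-tree is sink-free.} This is the crux. Let $G$ be a hollowed $2$-tree with its unique hole $H$ (an induced cycle of length $\ge 4$; one should first note, or cite from the surrounding text, that such a unique hole exists, obtained as the image of the base cycle in the construction). Let $D$ be any $1$-perfect orientation of $G$ and suppose for contradiction that $D$ has a sink $s$. By Lemma~\ref{lem:cycles}, the hole $H$ is oriented cyclically in $D$, so no vertex of $H$ is a sink; hence $s\notin V(H)$. Now I would use the simplicial-vertex structure: order the vertices not on $H$ as $w_1,\dots,w_k$ in reverse order of their addition, so that $w_k$ was added first and each $w_i$ is simplicial of degree $2$ in $G[V(H)\cup\{w_k,w_{k-1},\dots,w_i\}]$; equivalently, reading the construction forward, $G$ has a perfect-elimination-type ordering relative to $H$ in which every $w_i$ is simplicial of degree $2$ in the graph induced on $H$ together with the earlier-added $w_j$'s. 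The key point: every vertex $w_i\notin V(H)$ has, in $G$, two neighbors $a_i,b_i$ that are adjacent (its "parent edge"), and moreover $a_i,b_i$ lie "closer" to $H$ in the construction. If $s=w_i$ is a sink, then since $s$ is simplicial its neighborhood is a clique, and more importantly one can propagate: consider removing $s$. Actually the cleanest route is: a sink $s$ not on $H$ has all its incident edges oriented into $s$; pick the parent edge $\{a_i,b_i\}$ of $s=w_i$ — then both $a_i\to s$ and $b_i\to s$ are arcs of $D$. Now restrict $D$ to $G-s$: this is still a $1$-perfect orientation (deleting a vertex keeps out-neighborhoods cliques), and $G-s$ is again a hollowed $2$-tree (or, once all the $w_i$'s are gone, the hole $H$ itself), because $s$ is simplicial of degree $2$ in $G$ — deleting a simplicial degree-$2$ vertex reverses one step of the construction. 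Iterating, we may assume the sink is on $V(H)$, contradicting Lemma~\ref{lem:cycles}.

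\textbf{Main obstacle.} The delicate point is justifying that a sink of $D$ that is not on $H$ can, without loss of generality, be taken to be the \emph{last-added} simplicial vertex $w_1$ — i.e., that one is allowed to peel off simplicial degree-$2$ vertices one at a time from $G$ while preserving both the hollowed-$2$-tree structure and the existence of a sink. A sink $w_i$ in the middle of the construction need not be a simplicial vertex of the current graph at the point it was added in the canonical ordering, so one cannot literally "undo" that step; instead, the right statement is that \emph{some} simplicial degree-$2$ vertex of $G$ exists off $H$ (namely $w_1$), and one must show that the sink either \emph{is} $w_1$, or survives as a sink after deleting $w_1$ and passing to the smaller hollowed $2$-tree $G-w_1$. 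The latter is immediate: deleting a vertex other than the sink from a $1$-perfect orientation leaves a $1$-perfect orientation with the same sink, and $G-w_1$ is a hollowed $2$-tree by one undone construction step; so by induction on $|V(G)|$ the sink eventually lands on $V(H)$, contradicting Lemma~\ref{lem:cycles}. Hence no sink exists. A minor bookkeeping point to get right is the base case $k=0$ ($G=H$ is a cycle), where the claim is exactly Lemma~\ref{lem:cycles}.
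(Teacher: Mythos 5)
Your treatment of $2$-trees (via chordality and Lemma~\ref{lem:chordal}) and your inductive proof that hollowed $2$-trees are $1$-perfectly orientable both match the paper. The sink-freeness argument, however, has a genuine gap, and it sits exactly where the real work is. You reduce to a sink $s\notin V(H)$ and propose to peel off the last-added simplicial degree-$2$ vertex $w_1$: if $s\neq w_1$, deleting $w_1$ leaves a $1$-perfect orientation of the smaller hollowed $2$-tree $G-w_1$ that still has the sink $s$, contradicting the inductive hypothesis. That branch is fine. But you explicitly acknowledge the dichotomy ``the sink either \emph{is} $w_1$, or survives after deleting $w_1$'' and then only handle the second branch. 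When $s=w_1$, deleting $w_1$ deletes the sink, so the restriction of $D$ to $G-w_1$ has no obvious sink and no contradiction is produced; your conclusion that ``the sink eventually lands on $V(H)$'' therefore does not follow. (Your earlier attempt to delete an arbitrary sink $w_i$ off $H$ fails for the reason you yourself identify --- $w_i$ need not be simplicial of degree $2$ in $G$ --- so the $s=w_1$ case cannot be dodged.)

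The missing step is the paper's key observation, and you are one move away from it: you already note that if $s=w_1$ then both neighbors $x,y$ of $w_1$ satisfy $x\to w_1$ and $y\to w_1$ in $D$. Since $x$ and $y$ are adjacent, assume without loss of generality $x\to y$. Then any out-neighbor of $y$ other than $w_1$ would have to be adjacent to $w_1$ (the out-neighborhood of $y$ is a clique containing $w_1$), hence would have to be $x$; but $x\to y$, so $x$ is not an out-neighbor of $y$. Thus $N^+_D(y)=\{w_1\}$, and $y$ becomes a sink of the restriction of $D$ to $G-w_1$, which is a $1$-perfect orientation of the hollowed $2$-tree $G-w_1$ with a sink --- contradicting the inductive hypothesis. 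Inserting this propagation argument closes the gap and makes your proof essentially identical to the paper's, which runs the same induction but keys it to the last-added simplicial vertex $v$ from the outset rather than to the hole $H$.
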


\begin{proof}
Since $2$-trees are chordal, Lemma~\ref{lem:chordal} implies that all their rooted extensions are $1$-perfectly orientable.
In particular, every $2$-tree is $1$-perfectly orientable.

Now, let $G$ be a hollowed $2$-tree.
We prove by induction on $|V(G)|$ that $G$ is $1$-perfectly orientable, having only sink-free $1$-perfect orientations.
If $G$ is a cycle of length at least $4$, then this holds by Lemma~\ref{lem:cycles}.
Otherwise, $G$ is obtained from a hollowed $2$-tree $G'$ by adding to it a simplicial vertex, say $v$, of degree $2$.
Extending a $1$-perfect orientation of $G'$ by orienting the two edges incident with $v$ away from $v$ yields a $1$-perfect orientation of $G$,
hence $G$ is $1$-perfectly orientable. Suppose for a contradiction that $G$ has a $1$-perfect orientation $D$ with a sink $s$. If $s \neq v$, then the subgraph of $D$ induced by $V(G')$ would be a $1$-perfect orientation of $G'$ with a sink, contrary to the inductive hypothesis. Therefore $s = v$.
Let $x$ and $y$ be the two neighbors of $v$ and suppose without loss of generality that $x\to y$ in $D$.
Since $D$ is a $1$-perfect orientation of $G$, we infer that $y$ is a sink in $D'$, the subgraph of $D$ induced by
$V(G')$. However, this implies that $D'$ is a $1$-perfect orientation of $G'$ with a sink, contrary to the inductive hypothesis.
\end{proof}

In the rest of the section, we prove four inter-related lemmas:
one regarding $K_4$-minor-free biconnected graphs,
one showing that $2$-trees and hollowed $2$-trees are the only biconnected $\{K_4,K_{2,3},F_1\}$-induced-minor-free graphs,
one showing that every connected $\{K_4, K_{2,3}, F_1, F_2\}$-induced-minor-free graph has at most one hole,
and, finally, one characterizing the $\{K_{2,3},F_1,F_2\}$-induced-minor-free graphs within the class of
connected $K_4$-minor-free graphs.

\begin{lemma}\label{lem:2-trees}
Let $G$ be a biconnected $K_4$-minor-free graph with at least two vertices.
Then $G$ is chordal if and only if $G$ is a $2$-tree.
\end{lemma}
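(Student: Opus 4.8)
The plan is to prove the two directions separately. That every $2$-tree is chordal is the routine direction: by construction a $2$-tree is obtained from $K_2$ by successively adding simplicial vertices of degree $2$, so the reverse of this construction sequence is a perfect elimination ordering of $G$, and chordality follows from the characterization of chordal graphs by Fulkerson and Gross recalled above.

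For the converse I would argue by induction on $n = |V(G)|$. The base case $n = 2$ is immediate, since then $G = K_2$, which is a $2$-tree. For the inductive step, assume $n \ge 3$, so that $G$ is $2$-connected and in particular every vertex of $G$ has degree at least $2$. Since $G$ is chordal it has a simplicial vertex $v$; since $G$ is $K_4$-minor-free it contains no $K_4$ subgraph, so the clique $\{v\}\cup N_G(v)$ has at most three vertices, whence $\deg_G(v) \le 2$. Combining the two bounds gives $\deg_G(v) = 2$; writing $N_G(v) = \{x,y\}$, we have $xy \in E(G)$ because $v$ is simplicial. I would then set $G' = G - v$ and observe that $G'$ is chordal (being an induced subgraph of a chordal graph), $K_4$-minor-free (being a subgraph of a $K_4$-minor-free graph), and has $n-1 \ge 2$ vertices. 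Provided $G'$ is also biconnected, the induction hypothesis yields that $G'$ is a $2$-tree, and then $G$, which arises from $G'$ by adding back the simplicial degree-$2$ vertex $v$ adjacent to the edge $xy$ of $G'$, is a $2$-tree by definition.

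So the crux is to show that $G' = G - v$ is biconnected. It is connected because $G$ is $2$-connected; it remains to rule out that some $w \in V(G')$ is a cut vertex of $G'$, i.e.\ to show that $G - \{v,w\}$ is connected for every $w \neq v$. Here I would use that $G - w$ is connected (again by $2$-connectivity of $G$), that $v$ remains simplicial in $G - w$ (its neighborhood there is contained in the clique $\{x,y\}$), and that $v$ has positive degree in $G - w$ (degree $2$ if $w \notin \{x,y\}$ and degree $1$ otherwise); then a shortcutting argument---any path through a simplicial vertex enters and leaves it along two of its neighbors, which either coincide or are adjacent, so the path can be rerouted around it---shows that $G - \{v,w\} = (G-w) - v$ is connected. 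This is the step I expect to need the most care, since $2$-connectivity of $G$ by itself does not guarantee connectivity of $G - \{v,w\}$; it is the simpliciality of $v$ that makes the rerouting possible. The remaining points---closure of chordality and of $K_4$-minor-freeness under the operations used, and checking that the final step matches the definition of a $2$-tree---are routine.
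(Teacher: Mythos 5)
Your proof is correct and follows essentially the same route as the paper's: both directions are handled identically, with the converse proved by induction after locating a simplicial vertex forced to have degree exactly $2$ by $K_4$-minor-freeness and biconnectivity. The only difference is that you spell out the biconnectivity of $G-v$ (via the rerouting argument around the simplicial vertex), a step the paper dismisses as ``easy to see''; your justification of it is sound.
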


\begin{proof}
It follows immediately from the definition of $2$-trees that every $2$-tree is chordal.

Conversely, suppose that $G$ is a biconnected chordal $K_4$-minor-free graph with at least two vertices.
The fact that $G$ is a $2$-tree can be proved by induction on the number of vertices.
If $G$ has exactly $2$ vertices, then $G = K_2$ is a $2$-tree.
Suppose that $|V(G)|>2$. Since $G$ is chordal, it has a simplicial vertex, say $v$.
Since $G$ is $K_4$-free, $v$ is of degree at most $2$. Since $G$ is biconnected, $v$ is of degree at least $2$.
Therefore, $v$ is of degree exactly $2$. It is easy to see that the graph $G-v$ is a biconnected
 chordal $K_4$-minor-free graph with at least two vertices. Therefore, by the inductive hypothesis, $G-v$ is a $2$-tree.
 It follows that $G$ is also a $2$-tree.
\end{proof}

\begin{lemma}\label{lem:cyclically-orientable-2-conn-1-p-o}
Let $G$ be a biconnected $K_4$-minor-free  graph. Then, $G$ is $\{K_{2,3},F_1\}$-induced-minor-free
if and only if $G$ is either $K_1$, a $2$-tree, or a hollowed $2$-tree.
\end{lemma}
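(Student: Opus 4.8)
The plan is to prove the two implications separately, the backward one being short. Suppose first that $G$ is $K_1$, a $2$-tree, or a hollowed $2$-tree. If $G=K_1$ the statement is trivial; otherwise $G$ is $1$-perfectly orientable by Lemma~\ref{lem:1-perfect-orientations-2-trees}. Now $F_1\in{\cal F}$, and also $K_{2,3}=F_4\in{\cal F}$ (recall that $F_4=\overline{K_2+C_3}$, which is exactly $K_{2,3}$). By Theorem~\ref{prop:1PO}, no member of ${\cal F}$ is $1$-perfectly orientable, since each such graph occurs as an induced minor of itself. As the class of $1$-perfectly orientable graphs is closed under induced minors, $G$ has neither $K_{2,3}$ nor $F_1$ as an induced minor, which is what we wanted.

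For the forward implication I would proceed as follows. Assume $G$ is biconnected, $K_4$-minor-free, and $\{K_{2,3},F_1\}$-induced-minor-free. If $|V(G)|\le 2$ then $G\in\{K_1,K_2\}$ and $K_2$ is a $2$-tree, so assume from now on that $G$ is $2$-connected. Since $K_4$-minor-freeness coincides with $K_4$-induced-minor-freeness, $G$ is $\{K_4,K_{2,3}\}$-induced-minor-free, hence cyclically orientable by Theorem~\ref{thm:K4K23-i-m-free}, hence obtainable from cycles by an iterated pasting along edges by Corollary~\ref{cor:CO}; fix such a construction and call the resulting cycles the \emph{pieces} of $G$. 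A routine induction on the number of pastings shows that every piece is an \emph{induced} subgraph of $G$: a newly pasted cycle introduces only new vertices and edges incident to them, hence creates no chord in any piece (including itself). Consequently every piece of length at least $4$ is a hole of $G$, and if all pieces are triangles then $G$ is obtained from a triangle by repeatedly adjoining a simplicial vertex of degree $2$, i.e.\ $G$ is a $2$-tree and in particular chordal. Hence, if $G$ is chordal, Lemma~\ref{lem:2-trees} shows that $G$ is a $2$-tree and we are done; so assume $G$ is not chordal, in which case some piece has length at least $4$.

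At this point I would isolate and use the following claim: \emph{a $2$-connected $K_4$-minor-free graph with two distinct holes contains $K_{2,3}$ or $F_1$ as an induced minor.} Two distinct pieces of length at least $4$ would be two distinct holes of $G$ (they are induced cycles, and two pieces with equal vertex sets coincide since each piece is induced), so, as $G$ is $\{K_{2,3},F_1\}$-induced-minor-free, the claim forces $G$ to have exactly one piece $C^{*}$ of length at least $4$, every other piece being a triangle. Such a $G$ is a hollowed $2$-tree: starting from $C^{*}$, adjoin the triangular pieces one at a time in an order in which each triangle is pasted onto an edge of the part already constructed; each such step adds exactly one new vertex, adjacent precisely to the two (mutually adjacent) endpoints of that edge, hence a simplicial vertex of degree $2$. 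Thus $G$ is built from the cycle $C^{*}$ by repeatedly adjoining a simplicial vertex of degree $2$, which is the definition of a hollowed $2$-tree. This completes the argument modulo the claim.

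The main obstacle is proving the claim. Let $H_1\ne H_2$ be two holes; being induced cycles they have distinct vertex sets, and one may first contract each of them to a $4$-cycle without merging the two. One then distinguishes cases according to $V(H_1)\cap V(H_2)$. If this set contains two adjacent vertices $u,v$, then $uv$ is an edge of both holes (neither has a chord), so after contracting the two complementary arcs $G$ contains two $4$-cycles glued along the edge $uv$, which is $F_1$ (the graph obtained from two $4$-cycles by pasting along an edge; see Fig.~\ref{fig:1}). If $V(H_1)\cap V(H_2)$ contains two non-adjacent vertices $u,v$, then among the arcs of $H_1$ and $H_2$ joining $u$ and $v$ one can select three internally disjoint $u$,$v$-paths, which contract to $K_{2,3}$. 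If $|V(H_1)\cap V(H_2)|\le 1$, then $2$-connectivity (Menger's theorem) supplies one or two paths joining the two holes and avoiding their common vertex, if any; contracting these, one reaches — depending on the attachment points — two $4$-cycles sharing an edge (hence $F_1$) or three internally disjoint paths between two vertices of a single hole (hence $K_{2,3}$); for instance, when $H_1$ and $H_2$ are disjoint, two arcs of $H_1$ together with a path routed through $H_2$ give such a triple. The delicate part, and where most of the work lies, is to control the chord edges that may appear between the contracted pieces: since each contracted path has only degree-$2$ internal vertices and each hole is chordless, such a chord can only run between a connecting path and a hole, and one checks that it either produces a shorter connecting path (reducing to an already-treated sub-case) or at once reveals a $K_{2,3}$. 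The hypothesis that $G$ is $K_4$-minor-free is what excludes the appearance of a $K_4$, and a finite check confirms that $K_{2,3}$ and $F_1$ are the only graphs that survive this reduction.
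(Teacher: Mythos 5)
Your backward direction is fine (and $K_{2,3}=\overline{K_2+C_3}\in{\cal F}_4$ is a valid way to see it, though the paper argues more directly from the fact that $2$-trees and hollowed $2$-trees have at most one hole), and your reduction via Theorem~\ref{thm:K4K23-i-m-free} and Corollary~\ref{cor:CO} to a pasting-of-cycles structure, with the chordal case dispatched by Lemma~\ref{lem:2-trees}, matches the paper's route. The problem is that everything then rests on your claim that a $2$-connected $K_4$-minor-free graph with two distinct holes has $K_{2,3}$ or $F_1$ as an induced minor, and you do not actually prove it: the argument ends with ``one checks that\dots'' and ``a finite check confirms\dots'', which is precisely where the content of the lemma lives. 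Moreover, the case analysis as written is not airtight. In Case~1, if the two holes share more than the edge $uv$ (e.g.\ the $5$-cycle $v_1\cdots v_5$ with a vertex $w$ adjacent to $v_1,v_3$ has holes $v_1v_2v_3w$ and $v_1wv_3v_4v_5$ meeting in the path $v_1,w,v_3$), the two ``complementary arcs'' are not disjoint and your branch sets overlap; and even when the intersection is exactly the edge $uv$, you never rule out edges between $V(H_1)\setminus\{u,v\}$ and $V(H_2)\setminus\{u,v\}$, which would spoil the induced minor model of $F_1$. In Case~2, three internally disjoint $u$,$v$-arcs give a $K_{2,3}$ \emph{minor}, but promoting it to an \emph{induced} minor again requires controlling edges between the arcs, which you defer. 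A secondary, smaller gap: the reordering of the pasting sequence so that $C^{*}$ comes first (equivalently, that a $2$-tree can be grown from any of its edges) is asserted but not argued.

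The paper avoids the general two-holes analysis entirely. It runs an induction along the pasting sequence, so it only has to handle the single configuration where a cycle $C$ of length at least $4$ is pasted along an edge $xy$ of a graph $G'$ that already has a (unique) hole $C'$. It then picks vertex-disjoint paths $P,Q$ from $x,y$ to $C'$ of minimum total length, observes that their endpoints $x',y'$ on $C'$ are joined by three internally disjoint paths, and invokes separability at most $2$ (Theorem~\ref{thm:CO}/Theorem~\ref{thm:separability-2}) to conclude that $x'$ and $y'$ must be \emph{adjacent}; the minimality of $P$ and $Q$ then keeps the six branch sets clean and yields $F_1$ directly. If you want to keep your structure, you should either prove your claim in full (handling intersections of the two holes of sizes $0$, $1$, $2$ and $\ge 3$ separately, and verifying induced-minor models in each case), or adopt the paper's trick of using separability at most $2$ to force the adjacency that makes the $F_1$ model work.
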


\begin{proof}
If $G$ is either $K_1$, a $2$-tree or a hollowed $2$-tree, then $G$ has at most one hole, which immediately implies that
neither $F_1$ nor $K_{2,3}$ is an induced minor of $G$.

Suppose now that $G$ is $\{K_{2,3},F_1\}$-induced-minor-free. If $G$ is chordal, then, since $G$ is $K_4$-minor-free, it follows from Lemma~\ref{lem:2-trees} that $G$ is a $2$-tree. Therefore we may assume that $G$ is non-chordal. We will show that in this case $G$ is a hollowed $2$-tree. It follows from Theorem~\ref{thm:K4K23-i-m-free} that $G$ is cyclically orientable. By Corollary~\ref{cor:CO}, $G$ can be constructed from cycles by an iterative application of pasting along an edge. Assume that we are in step $k >1$ of this construction procedure, and assume inductively that the graph $G'$ constructed right before step $k$ is a hollowed $2$-tree. Now, in step $k$ we will paste a cycle $C$ along some edge $e=xy$ of $G'$.
If $C$ is of length $3$, the graph will remain a hollowed $2$-tree after the last operation. So we can assume that $C$ is of length at least $4$.
Let $C'$ be the unique hole in $G'$. Since $G$ is biconnected, it has a pair $P$, $Q$ of vertex-disjoint paths between
$x$ and $C'$ and between $y$ and $C'$, respectively. Let $P$ and $Q$ be chosen so that their common length $|E(P)|+|E(Q)|$ is minimized.
Let $x'$ and $y'$ be the endpoints of $P$ and $Q$ on $C'$, respectively.
Note that $G'$ contains three pairwise internally vertex-disjoint $x'$, $y'$-paths (two along $C'$ and one more through $P\cup Q$);
in particular, vertices $x'$ and $y'$ they cannot be separated by a set of less than $3$ other vertices.
Since $G'$ is cyclically orientable, it is of separability at most $2$ (by Theorem~\ref{thm:CO}).
Therefore, $x'$ and $y'$ are adjacent. Let $z$ be the neighbor of $x$ on $C$ other than $y$, and
similarly, $z'$ be the neighbor of $x'$ on $C'$ other than $y'$.
Now, the sets $V(P)$, $V(Q)$, $\{z\}$, $\{z'\}$, $V(C)\setminus\{x,y,z\}$, $V(C')\setminus\{x',y',z'\}$,
form an induced minor model of $F_1$ in $G$, contrary to the fact that $G$ is $F_1$-induced-minor-free.
\end{proof}

\begin{lemma}\label{lemma:no-c4}
Let $G$ be a connected $\{K_4, K_{2,3},F_1, F_2\}$-induced-minor-free graph. Then $G$ has at most one hole.
\end{lemma}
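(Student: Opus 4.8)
The plan is to argue by contradiction: suppose $G$ is a connected $\{K_4,K_{2,3},F_1,F_2\}$-induced-minor-free graph that contains at least two holes. I would first reduce to a convenient local configuration. Since $G$ has two holes $C_1\neq C_2$ and $G$ is connected, I can pick two holes whose ``distance'' is minimized in a suitable sense; the two natural cases are that the two holes share some vertices, or that they are vertex-disjoint and joined by a shortest path $P$ (with interior disjoint from both). In either case the union $C_1\cup C_2$ together with the connecting structure will be shown to contain one of $K_4$, $K_{2,3}$, $F_1$, or $F_2$ as an induced minor, which is the desired contradiction. The point of taking the holes (and the path) to be minimal is to prevent unwanted chords and shortcuts from spoiling the induced-minor model — exactly the bookkeeping that the proof of Lemma~\ref{lem:cyclically-orientable-2-conn-1-p-o} performs for $F_1$.

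The main structural step is the case in which $C_1$ and $C_2$ intersect. If they share exactly one vertex $v$, then contracting each of $C_1$ and $C_2$ down to a triangle through $v$ (keeping two non-$v$ branch vertices on each, which is possible since each hole has length $\ge 4$) exhibits an induced minor in which two $4$-cycles meet at a vertex; a short analysis of the possible chords between the two sides — ruled out, or exploited — yields $F_1$ or $F_2$. If $C_1$ and $C_2$ share an edge or a path, then $C_1\cup C_2$ already contains a ``theta-like'' subgraph: two vertices joined by three internally disjoint paths, which (after suppressing degree-$2$ vertices) is an induced $\Theta$-graph. Because each of $C_1,C_2$ is chordless, I can control which of the three paths carry chords; if any pair of branch vertices along the shared part is adjacent one gets a small forbidden configuration directly, and if not one gets $K_{2,3}$ as an induced minor by contracting each of the three paths appropriately. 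The disjoint case is easier: with $P$ a shortest path from $C_1$ to $C_2$, the set consisting of $V(P)$, two arcs of $C_1$, two arcs of $C_2$, forms the frame of an induced minor that contracts to $F_1$ (this is essentially the model built at the end of the proof of Lemma~\ref{lem:cyclically-orientable-2-conn-1-p-o}), and minimality of $P$ plus chordlessness of the holes guarantees there are no extra edges forcing a different, non-forbidden graph — in the worst case an extra edge only upgrades the model to $F_2$ or to $K_{2,3}$, both still forbidden.

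I expect the main obstacle to be the careful case distinction on how $C_1$ and $C_2$ overlap, and in particular verifying in each subcase that the collection of vertex sets one writes down is an \emph{induced} minor model (the ``only if'' direction of adjacency), not merely a minor model. Chords within a single hole are impossible by chordlessness, so the delicate edges are those between the two holes (or between a hole and the connecting path $P$); the strategy is to choose the holes and $P$ extremal so that any such edge, if present, either directly produces a short forbidden induced minor (a $K_4$ or $K_{2,3}$ on few vertices) or can be absorbed into one of the branch sets, turning the target graph from $F_1$ into $F_2$ or $K_{2,3}$ — all of which are in the forbidden list. Once the extremal choices are fixed, each subcase is a finite check, so the argument is routine but somewhat lengthy; I would organize it by first handling the vertex-disjoint case, then the single shared vertex case, then the shared-path case, reusing the $F_1$-model of Lemma~\ref{lem:cyclically-orientable-2-conn-1-p-o} as the template throughout.
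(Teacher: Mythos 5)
Your overall strategy -- take two holes, minimize their ``distance'', and contract the resulting configuration onto one of $K_4$, $K_{2,3}$, $F_1$, $F_2$ -- is viable, but it is organized differently from the paper and, as written, leaves the hardest part undone. The paper does \emph{not} analyze intersecting holes by hand. It first disposes of the case where both holes lie in a common block: a block of $G$ is a biconnected $\{K_4,K_{2,3},F_1\}$-induced-minor-free graph, hence by Lemma~\ref{lem:cyclically-orientable-2-conn-1-p-o} it is $K_1$, a $2$-tree, or a hollowed $2$-tree, and therefore has at most one hole. Only then does it consider two holes in \emph{different} blocks, joined by a shortest path $P$ on $n$ vertices, with a short case analysis on $n$ and on the attachments of an endpoint of $P$. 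The block structure is what makes those induced minor models trivially valid: a cut vertex separates the two holes, so there are no stray edges between $V(C)\setminus V(P)$ and $V(C')\setminus V(P)$ to account for. Your plan forgoes this reduction and instead confronts all intersection patterns (shared vertex, shared edge, shared path, disjoint) directly, which forces you to control precisely the edges the paper's route eliminates for free.

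That is where the gap lies. In the case of two holes sharing exactly one vertex $v$, you say ``a short analysis of the possible chords between the two sides --- ruled out, or exploited --- yields $F_1$ or $F_2$,'' but a cross edge $xy$ with $x\in C_1\setminus\{v\}$, $y\in C_2\setminus\{v\}$ does \emph{not} create a $K_4$-subdivision (the union has only three vertices of degree $\ge 3$), so it is not ``ruled out,'' and the naive contraction to two $4$-cycles through $v$ is then not an induced minor model of $F_2$. One can still win -- e.g.\ contracting the arcs $v\,{-}\,x$ and $v\,{-}\,y$ and then the edge $xy$ produces a $K_{2,4}$ and hence a $K_{2,3}$ induced minor -- but this requires choosing the cross edge and the arcs extremally so that further cross edges do not corrupt the model, and the degenerate subcases (when $x$ or $y$ is adjacent to $v$) must be checked separately. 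None of this is ``routine''; it is essentially a re-derivation of the separability/cyclic-orientability machinery already packaged in Lemma~\ref{lem:cyclically-orientable-2-conn-1-p-o}, and you should either carry it out in full or simply invoke that lemma blockwise as the paper does. Two smaller points: in the vertex-disjoint case with a single connecting path, the natural target after contracting $P$ into one hole is $F_2$ (two $4$-cycles sharing a vertex), not $F_1$ -- obtaining $F_1$ requires two disjoint connecting paths, i.e.\ the shared-edge configuration; and in the shared-path (theta) case your claim that cross chords between the two free arcs can be excluded is correct, but the reason is $K_4$-minor-freeness (such a chord completes a $K_4$-subdivision on the two branch vertices and the chord's endpoints), which deserves to be said explicitly.
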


\begin{proof}
Let $G$ be a biconnected $\{K_4, K_{2,3},F_1, F_2\}$-induced-minor-free graph. If $G$ has a hole, then $G$ is not chordal and in this case $G$
is a hollowed $2$-tree (by Lemma~\ref{lem:cyclically-orientable-2-conn-1-p-o}). Therefore, $G$ has at most one hole.

Therefore we may assume that $G$ is not biconnected. Since each block of $G$ is $\{K_4, K_{2,3},F_1, F_2\}$-induced-minor-free, each block of $G$ can contain at most one hole. Suppose that $G$ contains two holes, say $C$ and $C'$. Then $C$ and $C'$ belong to different blocks, say $B$ and $B'$, respectively. Let $P=v_1, \ldots, v_n$ be a shortest path between $C$ and $C'$. If $n=1$ then $F_2$ appears as induced minor, a contradiction.
If $n = 2$, then we consider the adjacencies between $v_1$ and $C'$. If $v_1$ has exactly one neighbor or exactly two neighbors in $C'$ which are consecutive then we get $F_2$ as induced minor, if it has exactly two neighbors in $C'$ which are not consecutive, we get either $F_2$ or $K_{2,3}$ as induced minor, and if it has $3$ or more neighbors in $C'$ we get $K_4$ as induced minor. If $n\ge 3$, then by minimality of the path we cannot have adjacencies between the vertices of the two cycles and internal vertices of the path, and thus we may contract $n-2$ edges of $P$ to reduce it to the previous case.
\end{proof}

\begin{lemma}\label{lem:K4-minor-free-with-cut-vertex}
Let $G$ be a connected $K_4$-minor-free graph with a cut vertex.
Then, $G$ is $\{K_{2,3},F_1,F_2\}$-induced-minor-free
if and only if every block of $G$ is a $2$-tree, except possibly one, which is
 a hollowed $2$-tree.
\end{lemma}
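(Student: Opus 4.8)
The plan is to prove the two implications separately, using throughout the following elementary observation: since every hole of a graph is $2$-connected it lies inside a single block, and since every block is an induced subgraph, the holes of $G$ are exactly the holes of the blocks of $G$; in particular the number of holes of $G$ is the sum, over the blocks $B$ of $G$, of the number of holes of $B$. For the forward implication, suppose $G$ is $\{K_{2,3},F_1,F_2\}$-induced-minor-free. Since $G$ is connected and has a cut vertex, each block $B$ of $G$ is biconnected on at least two vertices; moreover $B$, being an induced subgraph of $G$, is $K_4$-minor-free and $\{K_{2,3},F_1\}$-induced-minor-free. By Lemma~\ref{lem:cyclically-orientable-2-conn-1-p-o}, $B$ is $K_1$, a $2$-tree, or a hollowed $2$-tree, hence --- as $|V(B)|\ge 2$ --- a $2$-tree or a hollowed $2$-tree. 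Since $K_4$-minor-freeness coincides with $K_4$-induced-minor-freeness, $G$ is $\{K_4,K_{2,3},F_1,F_2\}$-induced-minor-free, so Lemma~\ref{lemma:no-c4} gives that $G$ has at most one hole. Because a $2$-tree is chordal and so has no hole, whereas a hollowed $2$-tree has exactly one hole, the observation forces at most one block of $G$ to be a hollowed $2$-tree, as desired.

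For the converse, suppose every block of $G$ is a $2$-tree except possibly one which is a hollowed $2$-tree. By the observation above, $G$ then has at most one hole (each $2$-tree block contributing none and the at-most-one hollowed block at most one). A direct inspection of Figure~\ref{fig:1} shows that each of $K_{2,3}$, $F_1$, and $F_2$ contains at least two holes. Hence it suffices to establish the following monotonicity claim: \emph{if $H$ is an induced minor of a graph, then $H$ has at most as many holes as that graph}. Granting this, no induced minor of $G$ can have two or more holes, so none of $K_{2,3}$, $F_1$, $F_2$ is an induced minor of $G$; that is, $G$ is $\{K_{2,3},F_1,F_2\}$-induced-minor-free.

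To prove the claim it is enough to treat a single vertex deletion and a single edge contraction. Deleting a vertex $v$ cannot create a hole, since every induced cycle of $G-v$ is already an induced cycle of $G$. For an edge contraction, write $e=xy$ and let $z$ be the vertex of $G/e$ obtained by identifying $x$ with $y$; I would construct an injection from the holes of $G/e$ to the holes of $G$. A hole of $G/e$ avoiding $z$ is already a hole of $G$, and is mapped to itself. If $C$ is a hole of $G/e$ through $z$, let $a$ and $b$ be the two neighbours of $z$ on $C$. Each of $a,b$ is adjacent in $G$ to $x$ or to $y$; on the other hand, since $C$ is chordless in $G/e$, no other vertex of $C$ is adjacent in $G$ to $x$ or $y$, and the vertices of $C$ different from $z$ have no chords among themselves in $G$. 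Replacing $z$ in $C$ by $x$, by $y$, or by the two-vertex path through the edge $e$ --- according to whether $a$ and $b$ both attach to $x$, both attach to $y$, or attach one to $x$ and one to $y$ --- then yields an induced cycle of $G$, of length $|V(C)|-1\ge 4$ in the first two cases and $|V(C)|+1\ge 5$ in the last; this cycle is the image of $C$. Injectivity holds because the image contains exactly $0$, $1$, or $2$ of the vertices $x,y$ depending on the case, and from the image the hole $C$ is recovered uniquely, by renaming the relevant copy of $x$ or $y$ back to $z$, or by contracting the edge $e$, which lies on the image in the last case.

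I expect the edge-contraction step to be the main obstacle. One must check that the three cases above are exhaustive once the subcases in which $a$ or $b$ is adjacent to both $x$ and $y$ are folded into the first two; that in each case the lifted cycle is genuinely chordless in $G$, using only that contraction destroys no edges and that $C$ was chordless; and that the three partial maps assemble into one injection --- in particular that a hole of $G$ containing both $x$ and $y$ (equivalently, containing the edge $e$) can be the image only of a hole of the third type, while a hole containing exactly one of $x,y$ can come only from the first two types. The remainder is routine bookkeeping.
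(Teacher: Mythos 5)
Your proof is correct, and while the forward implication is essentially the paper's argument (Lemma~\ref{lem:cyclically-orientable-2-conn-1-p-o} block by block, then Lemma~\ref{lemma:no-c4} to forbid two non-chordal blocks), your converse takes a genuinely different route. The paper splits the three forbidden graphs by connectivity: $K_{2,3}$ and $F_1$ are $2$-connected, so any induced minor model of them lies in a single block and is excluded by Lemma~\ref{lem:cyclically-orientable-2-conn-1-p-o}, while $F_2$ is handled separately by taking a minimum induced minor model and extracting two distinct holes from its two four-cycles, contradicting Lemma~\ref{lemma:no-c4}. You instead prove one general lemma --- the number of holes is non-increasing under vertex deletion and edge contraction --- and apply it uniformly, since $G$ has at most one hole while each of $K_{2,3}$, $F_1$, $F_2$ has at least two. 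Your contraction argument is sound: the case analysis on how the two $C$-neighbours of the contracted vertex attach to $x$ and $y$ is exhaustive once the ``adjacent to both'' subcases are folded in, the lifted cycles are chordless for exactly the reasons you give, and the $0$/$1$/$2$ count of $\{x,y\}$-vertices in the image yields injectivity. (One slip: replacing $z$ by a single vertex gives a cycle of length $|V(C)|$, not $|V(C)|-1$; this is harmless since $|V(C)|\ge 4$ either way.) What your approach buys is a reusable monotonicity statement and a proof that does not depend on which forbidden graphs are $2$-connected; what it costs is the careful lifting argument for contractions, which the paper avoids by only ever needing to lift one specific minimum model of $F_2$. Both are valid; yours is arguably the more transparent explanation of \emph{why} these three graphs are exactly the obstructions: they are the graphs with too many holes.
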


\begin{proof}
Let $G$ be a connected $K_4$-minor-free graph with a cut vertex.

Suppose first that $G$ is $\{K_{2,3},F_1,F_2\}$-induced-minor-free.
Since every block of $G$ is $\{K_{2,3},F_1\}$-induced-minor-free, Lemma~\ref{lem:cyclically-orientable-2-conn-1-p-o}
implies that every block of $G$ is either a $2$-tree or a hollowed $2$-tree.
Suppose for a contradiction that $G$ has two distinct blocks, say $B$ and $B'$, that are not $2$-trees.
Each of these two blocks is a biconnected $K_4$-minor-free graph with at least two vertices.
Therefore, by Lemma~\ref{lem:2-trees}, neither of $B$ and $B'$ is chordal. By Lemma~\ref{lemma:no-c4},
$G$ contains at most one hole, and therefore such a pair of blocks $B$ and $B'$ cannot exist.

Suppose now that every block is a $2$-tree, except possibly one, which is a hollowed $2$-tree. Then $G$ has at most one hole.
By Lemma~\ref{lem:cyclically-orientable-2-conn-1-p-o}, every block is $\{K_{2,3},F_1\}$-induced-minor-free. Since any induced minor $K_{2,3}$ or $F_1$ can only belong to a single block, $G$ is $\{K_{2,3}, F_1\}$-induced-minor-free. It remains to show that $G$ is $F_2$-induced-minor-free. Assume by contradiction that $G$ contains $F_2$ as an induced minor. Fix an induced minor model of $F_2$ in $G$, say $S_{v_1}, \ldots, S_{v_7}$, minimizing the size of the union of the $S_{v_i}$'s. Suppose that the two four-cycles of $F_2$ are induced by vertex sets $\{v_1, v_2, v_3, v_4\}$ and $\{v_4, v_5, v_6, v_7\}$. By the minimality of the model, the set $S_{v_1} \cup S_{v_2} \cup S_{v_3}$ together with a path within $S_{v_4}$ forms a hole in $G$. Similarly, the sets $S_{v_5} \cup S_{v_6} \cup S_{v_7}$ together with a path within $S_{v_4}$ form a hole in $G$. However, since these two holes are distinct,
this contradicts the fact that $G$ has at most one hole.
\end{proof}

%------------------------------------------------------------------------------------------------------------------------------------

\section{$1$-perfectly orientable $K_4$-minor-free graphs}
\label{sec:K4minor}

In this section we develop a structural characterization of $1$-perfectly orientable graphs within the class of $K_4$-minor-free graphs. Since the class of $K_4$-minor-free graphs contains the class of outerplanar graphs, this will imply a structural characterization of $1$-perfectly orientable outerplanar graphs (developed in Section~\ref{sec:outerplanar}).

We first characterize the biconnected case and then apply Theorem~\ref{thm:biconnected-reduction} to characterize the general case.

\subsection{The biconnected case}

To apply Theorem~\ref{thm:biconnected-reduction}, we need to understand both biconnected $1$-perfectly orientable $K_4$-minor-free graphs and
biconnected $1$-perfectly orientable $K_4$-minor-free rooted graphs. Both characterizations
are easy to obtain using the results of the previous section.

\begin{lemma}\label{lem:K4-minor-free-biconnected}
For a biconnected $K_4$-minor-free graph $G$, the following statements are equivalent:
\begin{enumerate}
  \item $G$ is $1$-perfectly orientable.
  \item $G$ is $\{K_{2,3},F_1\}$-induced-minor-free.
  \item $G$ is either $K_1$, a $2$-tree, or a hollowed $2$-tree.
  \item $G$ is either $K_1$, $K_2$, or can be constructed from a cycle by a sequence of additions of simplicial vertices of degree $2$.
  \item $G$ is either $K_1$, $K_2$, or has a sink-free $1$-perfect orientation.
\end{enumerate}
\end{lemma}

\begin{proof}
The implication $1\Rightarrow 2$ follows from Theorem~\ref{prop:1PO},
Lemma~\ref{lem:cyclically-orientable-2-conn-1-p-o} yields
the equivalence $2\Leftrightarrow 3$.
The equivalence between $3$ and $4$ follows immediately from the definitions of $2$-trees and hollowed $2$-trees.
The implication $5\Rightarrow 1$ is clear.

To complete the proof, we show the implication $3\Rightarrow 5$.
Suppose that $G$ is either $K_1$, a $2$-tree, or a hollowed $2$-tree.
If $G$ is $K_1$ or $K_2$, then there is nothing to prove.
Therefore, $G$ is either a cycle or is obtained from a $2$-connected possibly hollowed) $2$-tree
by adding to it a simplicial vertex of degree $2$.
We prove that $G$ has a sink-free $1$-perfect orientation by induction on $|V(G)|$.
If $G$ is a cycle, then $G$ has a sink-free $1$-perfect orientation.
If $G$ is obtained from a $2$-connected (possibly hollowed) $2$-tree $G'$
by adding to it a simplicial vertex, say $v$, of degree $2$, then
the inductive hypothesis implies that $G'$ has a sink-free $1$-perfect orientation, say $D'$.
Extending $D'$ by orienting the two arcs incident with $v$ away from $v$ yields a
sink-free $1$-perfect orientation of $G$, as claimed.
\end{proof}

\begin{lemma}\label{lem:K4-minor-free-biconnected-rooted-extension}
For a biconnected $K_4$-minor-free graph $G$, the following statements are equivalent:
\begin{enumerate}
  \item Some rooted extension of $G$ is $1$-perfectly orientable.
  \item All rooted extensions of $G$ are $1$-perfectly orientable.
  \item $G$ is chordal.
  \item $G$ is either $K_1$ or a $2$-tree.
\end{enumerate}
\end{lemma}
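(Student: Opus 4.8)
The plan is to establish the cycle of implications $2 \Rightarrow 3 \Rightarrow 4 \Rightarrow 1 \Rightarrow 3$, which suffices since $2 \Rightarrow 1$ is trivial once we have the full cycle (or we can simply note $2 \Rightarrow 1$ directly as all rooted extensions being $1$-perfectly orientable implies at least one is). The equivalence $3 \Leftrightarrow 4$ is already in hand: Lemma~\ref{lem:2-trees} states that a biconnected $K_4$-minor-free graph with at least two vertices is chordal if and only if it is a $2$-tree, and $K_1$ is chordal, so $3 \Leftrightarrow 4$ is immediate. For $2 \Rightarrow 3$ (equivalently $4$): Lemma~\ref{lem:K4-minor-free-biconnected} gives that a biconnected $K_4$-minor-free graph is $1$-perfectly orientable iff it is $K_1$, a $2$-tree, or a hollowed $2$-tree; since some rooted extension of $G$ being $1$-perfectly orientable implies in particular that $G$ itself is $1$-perfectly orientable, $G$ must be one of these three. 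But Lemma~\ref{lem:1-perfect-orientations-2-trees} says no rooted extension of a hollowed $2$-tree is $1$-perfectly orientable, so $G$ is $K_1$ or a $2$-tree, i.e. $G$ is chordal.

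For $4 \Rightarrow 2$ (equivalently $3 \Rightarrow 2$): this is exactly Lemma~\ref{lem:chordal}, which asserts that every rooted extension of a chordal graph is $1$-perfectly orientable; since $K_1$ and $2$-trees are chordal, all rooted extensions of $G$ are $1$-perfectly orientable. Finally, $1 \Rightarrow 3$ follows from the same reasoning as $2 \Rightarrow 3$ above, and $2 \Rightarrow 1$ is trivial. So the whole argument is essentially a bookkeeping exercise assembling Lemmas~\ref{lem:2-trees}, \ref{lem:chordal}, \ref{lem:1-perfect-orientations-2-trees}, and~\ref{lem:K4-minor-free-biconnected}.

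I do not anticipate a genuine obstacle here — the real content was already done in the preceding lemmas. The one point requiring a moment's care is the direction $1 \Rightarrow 3$: one must observe that if $G^v$ is $1$-perfectly orientable for some $v$, then $G$ has a $1$-perfect orientation (namely the one witnessing that $v$ is a sink), hence $G$ is $1$-perfectly orientable in the ordinary sense, so Lemma~\ref{lem:K4-minor-free-biconnected} applies and narrows $G$ down to $K_1$, a $2$-tree, or a hollowed $2$-tree; then Lemma~\ref{lem:1-perfect-orientations-2-trees} eliminates the hollowed $2$-tree case precisely because that case admits no $1$-perfect orientation with a sink. Concretely, I would write the proof as: $3 \Leftrightarrow 4$ by Lemma~\ref{lem:2-trees}; $4 \Rightarrow 2$ by Lemma~\ref{lem:chordal}; $2 \Rightarrow 1$ trivially; and $1 \Rightarrow 3$ by combining Lemma~\ref{lem:K4-minor-free-biconnected} with Lemma~\ref{lem:1-perfect-orientations-2-trees}.
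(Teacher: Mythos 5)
Your proposal is correct and follows essentially the same route as the paper: the paper proves $1\Rightarrow 4$ by noting that $G$ must be $1$-perfectly orientable, hence (via Theorem~\ref{prop:1PO} and Lemma~\ref{lem:cyclically-orientable-2-conn-1-p-o}, i.e.\ the content of Lemma~\ref{lem:K4-minor-free-biconnected}) $K_1$, a $2$-tree, or a hollowed $2$-tree, eliminating the last case by Lemma~\ref{lem:1-perfect-orientations-2-trees}, and then closes the cycle with $4\Rightarrow 3$ (clear), $3\Rightarrow 2$ (Lemma~\ref{lem:chordal}), and $2\Rightarrow 1$ (trivial). Your only deviation is citing Lemma~\ref{lem:K4-minor-free-biconnected} directly rather than re-deriving it, which is a cosmetic difference.
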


\begin{proof}
First, we show the implication $1\Rightarrow 4$.
Suppose that some rooted extension of a biconnected $K_4$-minor-free graph $G$ is $1$-perfectly orientable.
In particular, $G$ is $1$-perfectly orientable, and hence $\{K_{2,3},F_1\}$-induced-minor-free
by Theorem~\ref{prop:1PO}. By Lemma~\ref{lem:cyclically-orientable-2-conn-1-p-o},
$G$ is either $K_1$, a $2$-tree, or a hollowed $2$-tree.
By Lemma~\ref{lem:1-perfect-orientations-2-trees}, $G$ cannot be a hollowed $2$-tree, and hence condition $4$ holds.

Implication $4\Rightarrow 3$ is clear, implication $3\Rightarrow 2$ follows from Lemma~\ref{lem:chordal}, and implication
$2\Rightarrow 1$ is trivial.
\end{proof}

\begin{corollary}\label{cor:rooted-extension-biconnected}
For a biconnected $K_4$-minor-free graph $G$ and $v\in V(G)$, the rooted graph $G^v$ is
$1$-perfectly orientable if and only if
$G$ is either $K_1$ or a $2$-tree.
\end{corollary}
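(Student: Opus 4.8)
The plan is to derive Corollary~\ref{cor:rooted-extension-biconnected} directly from Lemma~\ref{lem:K4-minor-free-biconnected-rooted-extension}, which already establishes the essential equivalences. The only gap between the two statements is that the lemma speaks of \emph{some} (equivalently, \emph{all}) rooted extensions of $G$ being $1$-perfectly orientable, whereas the corollary fixes a particular vertex $v\in V(G)$. The bridge is the observation that, by definition, $G^v$ being $1$-perfectly orientable is precisely the statement ``$G$ has a $1$-perfect orientation in which $v$ is a sink,'' so quantifying over $v$ ranges exactly over all rooted extensions of $G$.

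First I would record the easy direction. If $G$ is $K_1$ or a $2$-tree, then by the implication $4\Rightarrow 2$ of Lemma~\ref{lem:K4-minor-free-biconnected-rooted-extension} all rooted extensions of $G$ are $1$-perfectly orientable; in particular $G^v$ is $1$-perfectly orientable for the given $v$. (Alternatively, this is immediate from Lemma~\ref{lem:chordal}, since $2$-trees and $K_1$ are chordal.) Conversely, if $G^v$ is $1$-perfectly orientable for some $v\in V(G)$, then in particular \emph{some} rooted extension of $G$ is $1$-perfectly orientable, so condition $1$ of Lemma~\ref{lem:K4-minor-free-biconnected-rooted-extension} holds, and hence condition $4$ holds: $G$ is $K_1$ or a $2$-tree. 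This completes the argument.

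There is essentially no main obstacle here; the corollary is a restatement of the lemma's $1\Leftrightarrow 4$ equivalence phrased for a fixed root. The only point requiring a moment's care is the passage from ``$G^v$ is $1$-perfectly orientable for a fixed $v$'' to ``some rooted extension of $G$ is $1$-perfectly orientable'' — but this is immediate from the definition of a rooted extension of $G$ as any pair $(G,v)$ with $v\in V(G)$. I would keep the proof to two or three sentences, citing Lemma~\ref{lem:K4-minor-free-biconnected-rooted-extension} for both directions.

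\begin{proof}
This is a direct consequence of Lemma~\ref{lem:K4-minor-free-biconnected-rooted-extension}. Recall that a rooted extension of $G$ is a rooted graph $G^w$ for some $w\in V(G)$. If $G$ is $K_1$ or a $2$-tree, then by the implication $4\Rightarrow 2$ of Lemma~\ref{lem:K4-minor-free-biconnected-rooted-extension}, all rooted extensions of $G$ are $1$-perfectly orientable; in particular, $G^v$ is $1$-perfectly orientable. Conversely, if $G^v$ is $1$-perfectly orientable, then some rooted extension of $G$ is $1$-perfectly orientable, so by the implication $1\Rightarrow 4$ of Lemma~\ref{lem:K4-minor-free-biconnected-rooted-extension}, $G$ is either $K_1$ or a $2$-tree.
\end{proof}
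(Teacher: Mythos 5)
Your proof is correct and matches the paper's intent exactly: the corollary is stated there without proof as an immediate consequence of Lemma~\ref{lem:K4-minor-free-biconnected-rooted-extension}, and your derivation via the equivalence of conditions 1, 2, and 4 of that lemma is precisely the intended argument.
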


\subsection{The general case}

\vbox{Now we have all the ingredients ready to complete the characterization of $1$-perfectly orientable $K_4$-minor-free graphs. In the following result we will use the following two operations:
\begin{itemize}
\item $(A_1)$: attach a simplicial vertex of degree $1$.
\item $(A_2)$: attach a simplicial vertex of degree $2$ (that is, add a new vertex and connect it by an edge to exactly two vertices of the graph, which are adjacent to each other).
\end{itemize}}

\begin{theorem}\label{thm:k4-minor-free}
Let $G$ be a connected $K_4$-minor-free graph.
Then the following statements are equivalent:
\begin{enumerate}
  \item $G$ is $1$-perfectly orientable.
  \item $G$ is $\{K_{2,3},F_1,F_2\}$-induced-minor-free.
  \item Every block of $G$ is a $2$-tree, except possibly one, which is either $K_1$ or a hollowed $2$-tree.
  \item $G$ can be constructed from either $K_1$ or a cycle by a sequence of operations $(A_1)$ and $(A_2)$.
\end{enumerate}
\end{theorem}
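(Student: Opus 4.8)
The plan is to establish the cycle of implications $1 \Rightarrow 2 \Rightarrow 3 \Rightarrow 4 \Rightarrow 1$, leaning heavily on the machinery already developed. The implication $1 \Rightarrow 2$ is immediate from Theorem~\ref{prop:1PO}, since $F_1, F_2, K_{2,3}$ all appear in (or are induced minors of members of) the forbidden family $\cal F$: indeed $K_{2,3}$ is an induced minor of $F_1$ (or one can cite that $K_{2,3}$ has a $K_4$-minor-free supergraph among the $F_i$), and $F_1, F_2$ are listed explicitly. The implication $2 \Rightarrow 3$ is exactly the content of Lemma~\ref{lem:K4-minor-free-with-cut-vertex} together with Lemma~\ref{lem:K4-minor-free-biconnected} (for the biconnected case, where $G$ has a single block): if $G$ is biconnected, then being $\{K_{2,3},F_1\}$-induced-minor-free forces $G$ to be $K_1$, a $2$-tree, or a hollowed $2$-tree; if $G$ has a cut vertex, Lemma~\ref{lem:K4-minor-free-with-cut-vertex} says all blocks are $2$-trees except possibly one hollowed $2$-tree. (A block that is $K_1$ only arises when $G = K_1$ itself, since blocks with a cut vertex incident have at least two vertices.)

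For $3 \Rightarrow 4$, I would argue by induction on the number of blocks. If $G$ has one block, it is $K_1$, a $2$-tree, or a hollowed $2$-tree, and in each case it is either $K_1$, or built from $K_2$ (for a $2$-tree) or a cycle (for a hollowed $2$-tree) by a sequence of $(A_2)$ operations — for a $2$-tree, note $K_2$ itself is obtained from $K_1$ by one $(A_1)$ step, so the construction starts from $K_1$. If $G$ has more than one block, pick an end block $B$ of $G$ attached at a cut vertex $v$. By hypothesis $B$ is a $2$-tree (the unique possibly-hollowed block can be chosen to survive in $G - (V(B)\setminus\{v\})$; if $B$ is the hollowed one, swap the role with another end block, which exists since $G$ has at least two end blocks — and if \emph{both} end blocks were hollowed $2$-trees that would contradict condition~3). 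Then $G' = G - (V(B) \setminus \{v\})$ is connected, $K_4$-minor-free, and still satisfies condition~3, so by induction $G'$ is constructible from $K_1$ or a cycle by $(A_1),(A_2)$ operations. Finally, a $2$-tree $B$ attached at one of its vertices $v$ can be built on top of $G'$: order the simplicial-vertex additions producing $B$ from $K_2$ so that $v$ is an original vertex of the initial $K_2$; the other initial vertex is attached to $v$ by an $(A_1)$ step, and all subsequent simplicial-degree-$2$ additions are $(A_2)$ steps. Appending these operations to the construction of $G'$ yields $G$.

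For $4 \Rightarrow 1$, I would show that every graph constructible from $K_1$ or a cycle by operations $(A_1)$ and $(A_2)$ is $1$-perfectly orientable, by induction on the number of construction steps. The base cases $K_1$ and a cycle are $1$-perfectly orientable (trivially, resp.\ by orienting the cycle cyclically). For the inductive step, suppose $G$ is obtained from a $1$-perfectly orientable graph $G'$ by an $(A_1)$ or $(A_2)$ operation adding vertex $v$; fix a $1$-perfect orientation $D'$ of $G'$ and extend it to $D$ by orienting the one or two edges at $v$ \emph{away} from $v$. Then $N^+_D(v)$ is a single vertex (in the $(A_1)$ case) or a $2$-clique (in the $(A_2)$ case, since the two neighbors of $v$ are adjacent), hence a clique; and for every other vertex $w$, $N^+_D(w) = N^+_{D'}(w)$ is unchanged and remains a clique in $G$ since $G'$ is an induced subgraph of $G$. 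Thus $D$ is a $1$-perfect orientation of $G$.

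The main obstacle, and the step requiring the most care, is $3 \Rightarrow 4$: one must correctly handle the routing of the inductive peeling so that the single non-$2$-tree (hollowed) block is not destroyed prematurely, and one must verify the slightly fiddly claim that a $2$-tree, viewed as rooted at an arbitrary one of its vertices, can always be "grown" on top of an existing graph using only $(A_1)$ and $(A_2)$ — this relies on the standard fact that a $2$-tree has a construction sequence from $K_2$ in which any prescribed edge (in particular an edge at the chosen root $v$) is the initial $K_2$, which follows because $2$-trees are chordal and every edge of a $2$-tree lies in a perfect elimination ordering realized last. Everything else is bookkeeping over results already proved.
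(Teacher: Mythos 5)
Your proof is essentially correct, but it is organized differently from the paper's: you close a single cycle $1\Rightarrow 2\Rightarrow 3\Rightarrow 4\Rightarrow 1$, whereas the paper first establishes $1\Leftrightarrow 3$ in the cut-vertex case by invoking the block-tree reduction (Theorem~\ref{thm:biconnected-reduction}) together with Lemma~\ref{lem:K4-minor-free-biconnected} and Corollary~\ref{cor:rooted-extension-biconnected}, and only then adds $2\Leftrightarrow 3$ via Lemma~\ref{lem:K4-minor-free-with-cut-vertex} and $3\Rightarrow 4\Rightarrow 1$ by the same inductions you use. Your route bypasses Theorem~\ref{thm:biconnected-reduction} entirely: necessity of the block structure is obtained through the forbidden induced minors ($1\Rightarrow 2\Rightarrow 3$, with the structural weight carried by Lemmas~\ref{lem:cyclically-orientable-2-conn-1-p-o} and~\ref{lem:K4-minor-free-with-cut-vertex}), and sufficiency through the explicit simplicial-vertex construction ($3\Rightarrow 4\Rightarrow 1$). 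This is a legitimate economy --- the paper's argument in effect proves $3\Rightarrow 1$ twice --- at the cost of not illustrating the rooted-block machinery that the paper reuses conceptually elsewhere. Your handling of $3\Rightarrow 4$ (peeling a $2$-tree end block, which exists because there are at least two end blocks and at most one non-$2$-tree block, and regrowing it from an edge at the cut vertex via one $(A_1)$ step followed by $(A_2)$ steps) matches the paper's argument and is correct; the fact that a $2$-tree admits a construction sequence starting from any prescribed edge is standard and is used by the paper with the same level of detail.

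One justification needs repair. In $1\Rightarrow 2$ you assert that $K_{2,3}$ is an induced minor of $F_1$. This is false ($F_1$ is two $4$-cycles sharing an edge, so its two degree-$3$ vertices are adjacent, and no induced minor model of $K_{2,3}$ fits in its six vertices), and even if it were true it would point the wrong way: knowing that $K_{2,3}$ sits inside a forbidden graph does not make $K_{2,3}$ itself forbidden. What you actually need is that no $1$-perfectly orientable graph has $K_{2,3}$ as an induced minor. This does follow from Theorem~\ref{prop:1PO} exactly as cited, because $K_{2,3}=\overline{K_2+C_3}$ is the $k=1$ member of the family ${\cal F}_4$; alternatively, observe directly that $K_{2,3}$ is triangle-free with more edges than vertices, hence not $1$-perfectly orientable, and apply closure of the class under induced minors. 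With that correction the argument is complete.
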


\begin{proof}
Suppose first that $G$ is biconnected.
By Theorem~\ref{prop:1PO}, condition $1$ implies condition $2$.
By Lemma~\ref{lem:K4-minor-free-biconnected}, condition $2$ implies condition $3$, and condition $3$ implies condition $4$.
Suppose now that $G$ can be constructed from either $K_1$ or a cycle by a sequence of operations $(A_1)$ and $(A_2)$.
Since $G$ is biconnected, we may assume that operation $(A_1)$ was never used in the sequence, unless $G$ is isomorphic to $K_2$.
Therefore, $G$ is either $K_1$, $K_2$, or can be constructed from a cycle by a sequence of operations $(A_2)$.
By Lemma~\ref{lem:K4-minor-free-biconnected}, this implies condition $1$.

\begin{sloppypar}
We are left with the case when $G$ has a cut vertex. In this case, we first establish the equivalence of conditions $1$ and $3$.
Let $T$ be the block tree of $G$. By Theorem~\ref{thm:biconnected-reduction}, $G$ is $1$-perfectly orientable~if and only if one of
the following conditions holds:
\begin{itemize}
  \item There exists a block $B_r$ of $G$ such that $B_r$ is $1$-perfectly orientable~and
  for every arc \hbox{$(B,v)\in {\cal B}\times C$} of the $B_r$-rooted orientation of $T$, the rooted graph $B^v$ is $1$-perfectly orientable.
  \item There exists a cut vertex $v_r$ of $G$ such that
  for every arc $(B,v)\in {\cal B}\times C$ of the $v_r$-rooted orientation of $T$, the rooted graph $B^v$ is $1$-perfectly orientable.
\end{itemize}
Since each block of $G$ is a biconnected $K_4$-minor-free graph, Lemma~\ref{lem:K4-minor-free-biconnected}
and~Corollary~\ref{cor:rooted-extension-biconnected} imply that the above two conditions are equivalent, respectively, to the following two:
\begin{itemize}
  \item There exists a block $B_r$ of $G$ such that $B_r$ is either a $2$-tree or a hollowed $2$-tree, and for every arc \hbox{$(B,v)\in {\cal B}\times C$} of the $B_r$-rooted orientation of $T$, the graph $B$ is a $2$-tree.
  \item There exists a cut vertex $v_r$ of $G$ such that for every arc $(B,v)\in {\cal B}\times C$ of the $v_r$-rooted orientation of $T$, the graph $B$ is a $2$-tree.
\end{itemize}
Since the only sink of a $w$-rooted orientation of a tree $T'$ (with $w\in V(T')$) is $w$, the two conditions can be further simplified as follows:
\begin{itemize}
  \item There exists a block $B_r$ of $G$ such that $B_r$ is either a $2$-tree or a hollowed $2$-tree, and every other block $B\neq B_r$ is a $2$-tree.
  \item All blocks of $G$ are $2$-trees.
\end{itemize}
Clearly, one of these two conditions holds if and only if condition 3 holds. This establishes the equivalence of conditions $1$ and $3$.
\end{sloppypar}

The equivalence of conditions $2$ and $3$ follows from Lemma~\ref{lem:K4-minor-free-with-cut-vertex}.
The implication $3\Rightarrow 4$ can be proved by induction on $|V(G)|$, as follows. If $|V(G)| = 1$, then $G$ is isomorphic to $K_1$ and we are done.
Otherwise, $G$ has an end block $B$ that is not a hollowed $2$-tree. Let $v$ be the cut vertex of $G$ contained in $B$. By induction, the graph $G' = G-(V(B)\setminus\{v\})$ can be constructed from either $K_1$ or a cycle by a sequence of operations $(A_1)$ and $(A_2)$. Such a sequence can be extended with an operation of the form $(A_1)$ (resulting in a simplicial vertex $w$ with a unique neighbor $v$) to create a new block corresponding to $B$ and then with a sequence of operations of the form $(A_2)$ to grow $B$ out of the edge $\{v,w\}$.
The implication $4\Rightarrow 1$ can also be proved by induction on the number of vertices,
using the fact that $K_1$ and cycles are $1$-perfectly orientable and that a $1$-perfect orientation of a graph $G$ can be extended to a $1$-perfect orientation of a graph obtained from $G$ by adding to it a simplicial vertex $v$ by orienting
the edges incident with $v$ away from $v$.
\end{proof}

As a consequence of Theorem~\ref{thm:k4-minor-free} we obtain the following result.

\begin{sloppypar}
\begin{corollary}
For every graph $G$, the following statements are equivalent:
\begin{enumerate}
  \item $G$ is $1$-perfectly orientable and $K_4$-minor-free.
  \item $G$ is $1$-perfectly orientable and cyclically orientable.
 \item $G$ is $\{K_4, K_{2,3}, F_1,F_2\}$-induced-minor-free.
\end{enumerate}
\end{corollary}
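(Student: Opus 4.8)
The plan is to derive all three equivalences from results already in hand, after a routine reduction to the connected case. Each of the four properties ``$1$-perfectly orientable'', ``$K_4$-minor-free'', ``cyclically orientable'', and ``$\{K_4,K_{2,3},F_1,F_2\}$-induced-minor-free'' holds for a graph precisely when it holds for every connected component; hence all three statements of the corollary are componentwise, and it suffices to prove the equivalence when $G$ is connected.

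For $1 \Leftrightarrow 2$: assume first that $G$ is connected, $1$-perfectly orientable, and $K_4$-minor-free. By Theorem~\ref{prop:1PO}, $G$ is $\mathcal{F}$-induced-minor-free; since $K_{2,3}$ is the smallest member $F_4$ of $\mathcal{F}_4$ (one checks directly that $\overline{K_2+C_3}$ is isomorphic to $K_{2,3}$), $G$ is $K_{2,3}$-induced-minor-free. As $K_4$-minor-freeness coincides with $K_4$-induced-minor-freeness, $G$ is $\{K_4,K_{2,3}\}$-induced-minor-free, hence cyclically orientable by Theorem~\ref{thm:K4K23-i-m-free}; combined with the hypothesis this gives statement~$2$. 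Conversely, if $G$ is cyclically orientable then by Lemma~\ref{lem:CO-K4} it contains no subdivision of $K_4$, and therefore (since $K_4$ has maximum degree at most three) it is $K_4$-minor-free; together with the assumption that $G$ is $1$-perfectly orientable this yields statement~$1$.

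For $1 \Leftrightarrow 3$: the implication $1 \Rightarrow 3$ uses the same computation as above. A connected $1$-perfectly orientable $K_4$-minor-free graph is $\mathcal{F}$-induced-minor-free by Theorem~\ref{prop:1PO} and $K_4$-induced-minor-free; since $F_1,F_2\in\mathcal{F}$ and $K_{2,3}\in\mathcal{F}_4\subseteq\mathcal{F}$, it is $\{K_4,K_{2,3},F_1,F_2\}$-induced-minor-free. For $3 \Rightarrow 1$, note that $K_4$-induced-minor-freeness is the same as $K_4$-minor-freeness, so $G$ is a connected $K_4$-minor-free graph that is $\{K_{2,3},F_1,F_2\}$-induced-minor-free; the implication $2 \Rightarrow 1$ of Theorem~\ref{thm:k4-minor-free} then gives that $G$ is $1$-perfectly orientable.

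I do not expect a genuine obstacle here: the argument is a bookkeeping exercise combining Theorems~\ref{prop:1PO}, \ref{thm:K4K23-i-m-free} and~\ref{thm:k4-minor-free} with Lemma~\ref{lem:CO-K4}. The only points worth stating explicitly are the componentwise reduction that lets us invoke Theorem~\ref{thm:k4-minor-free} (which is phrased for connected graphs), and the identification $K_{2,3}\cong\overline{K_2+C_3}$, which places $K_{2,3}$ among the forbidden induced minors listed in Theorem~\ref{prop:1PO}.
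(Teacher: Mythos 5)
Your proof is correct and follows essentially the same route as the paper: reduce to the connected case, then combine Theorem~\ref{prop:1PO} (noting $K_{2,3}\cong\overline{K_2+C_3}\in{\cal F}_4$), Theorem~\ref{thm:K4K23-i-m-free}, and Theorem~\ref{thm:k4-minor-free}. The only cosmetic difference is which implications you chain (you prove $1\Leftrightarrow 2$ and $1\Leftrightarrow 3$ directly, using Lemma~\ref{lem:CO-K4} for ``cyclically orientable $\Rightarrow$ $K_4$-minor-free'', where the paper routes $2\Rightarrow 3\Rightarrow 2$ through the induced-minor characterizations), which changes nothing of substance.
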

\end{sloppypar}

\begin{proof}
Since each of the three properties are closed under taking components and disjoint union, we may assume that $G$ is connected.
The equivalence $1\Leftrightarrow 3$ is then an immediate consequence of Theorem~\ref{thm:k4-minor-free}.
The implication $2\Rightarrow 3$ follows from Theorems~\ref{prop:1PO} and~\ref{thm:K4K23-i-m-free}.
The implication $3\Rightarrow 2$ follows from Theorems~\ref{thm:k4-minor-free} and~\ref{prop:1PO}.
\end{proof}

%------------------------------------------------------------------------------------------------------------------------------------

\section{$1$-perfectly orientable outerplanar graphs}\label{sec:outerplanar}

Since every outerplanar graph is $K_4$-minor-free, we can derive from Theorem~\ref{thm:k4-minor-free} a characterization of
$1$-perfectly orientable outerplanar graphs. In the following result we will use the following two operations:
%by which any connected outerplanar $1$-perfectly orientable graph can be constructed from $K_1$ or a cycle.
\begin{itemize}
\item $(A_1)$ attach a simplicial vertex of degree $1$.
\item $(A_2')$ attach a simplicial vertex of degree $2$ to adjacent vertices $v$ and $w$ where the edge $vw$ lies in at most one induced cycle.
\end{itemize}

\begin{theorem}\label{thm:outerplanar-characterization}
For a connected outerplanar graph $G$, the following statements are equivalent:
\begin{enumerate}
  \item $G$ is $1$-perfectly orientable.
  \item $G$ is $\{K_{2,3},F_1,F_2\}$-induced-minor-free.
  \item Every block of $G$ is a $2$-tree, except possibly one, which is either $K_1$ or a hollowed $2$-tree.
  \item $G$ can be constructed from either $K_1$ or a cycle by a sequence of operations $(A_1)$ and $(A_2')$.
\end{enumerate}
\end{theorem}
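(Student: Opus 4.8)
The strategy is to reduce everything to Theorem~\ref{thm:k4-minor-free}, exploiting the fact that every outerplanar graph is $K_4$-minor-free. The equivalences $1\Leftrightarrow 2\Leftrightarrow 3$ are immediate: they are exactly the corresponding equivalences in Theorem~\ref{thm:k4-minor-free} restricted to the (narrower) class of connected outerplanar graphs, and outerplanarity is preserved by the constructions appearing in condition~$3$ (a $2$-tree on at least three vertices is outerplanar, and a hollowed $2$-tree is outerplanar, since both are $\{K_4,K_{2,3}\}$-minor-free by inspection of the building process, pasting triangles onto a cycle along an edge never creating a $K_{2,3}$-minor). So the only real content is the equivalence of condition~$4$ with the others, and the whole point is that within outerplanar graphs the second building operation $(A_2)$ must be restricted to $(A_2')$, since adding a simplicial vertex of degree $2$ across an edge lying in two or more induced cycles would create a $K_{2,3}$-minor (the two cycles through $vw$ together with the new triangle), destroying outerplanarity.

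First I would prove $4\Rightarrow 1$. By Theorem~\ref{thm:k4-minor-free} (implication $4\Rightarrow 1$ there), since every sequence of $(A_1)$ and $(A_2')$ operations is in particular a sequence of $(A_1)$ and $(A_2)$ operations, any $G$ constructed this way from $K_1$ or a cycle is $1$-perfectly orientable. (One should also note such a $G$ is outerplanar, but that is part of the hypothesis of the theorem, so nothing extra is needed here; alternatively it follows inductively since $(A_2')$ is exactly the operation that preserves outerplanarity.)

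Next, $3\Rightarrow 4$, which is the heart of the matter. I would argue by induction on $|V(G)|$, paralleling the proof of $3\Rightarrow 4$ in Theorem~\ref{thm:k4-minor-free} but verifying that every $(A_2)$-step used there can be taken to be an $(A_2')$-step \emph{because $G$ is outerplanar}. Concretely: if $|V(G)|=1$ we are done; otherwise pick an end block $B$ of $G$ that is not the exceptional (hollowed-$2$-tree) block, with cut vertex $v$, apply induction to $G' = G - (V(B)\setminus\{v\})$ to get a construction sequence, extend it by an $(A_1)$-step creating a pendant vertex $w$ adjacent to $v$, and then grow $B$ out of the edge $vw$ by successive additions of simplicial degree-$2$ vertices. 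At each such addition we attach a vertex across some edge $e$ of the partially-built block; since the final graph $G$ is outerplanar and every edge of an outerplanar graph lies in at most two faces of a planar embedding with all vertices on the outer face — equivalently, by Proposition~\ref{prop:outerplanar}, no edge of $G$ can lie in two induced cycles whose union together with an extra common triangle would yield a $K_{2,3}$- or $K_4$-induced minor — the edge $e$ lies in at most one induced cycle of $G$ at the moment we build on it. Hence the step is legitimately an $(A_2')$-step. The only subtlety is a hollowed $2$-tree $B_r$ playing the role of the exceptional block: such a block is itself constructible from a cycle by $(A_2)$-steps, and these are automatically $(A_2')$-steps since in a hollowed $2$-tree every edge other than those of the unique hole lies in exactly one induced cycle (the unique hole itself not being used as a base for any addition, since its edges may lie in no triangle, while an edge shared by the hole and an added triangle still lies in only that one hole — the triangle is not induced-cycle-relevant beyond being a $K_3$). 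I would spell out this bookkeeping: in a $2$-tree every edge lies in at least one triangle but, crucially, in a $K_4$-minor-free $2$-tree no edge lies in two distinct induced cycles of length $\ge 4$, and every triangle addition is valid; in a hollowed $2$-tree the same holds with the unique hole as the single exception, which never obstructs an $(A_2')$-step.

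Finally $1\Rightarrow 2$ and $2\Rightarrow 3$: the first is Theorem~\ref{prop:1PO}, the second is Lemma~\ref{lem:K4-minor-free-with-cut-vertex} together with Lemma~\ref{lem:K4-minor-free-biconnected} for the biconnected case, verbatim as in Theorem~\ref{thm:k4-minor-free}; closing the cycle $4\Rightarrow 1\Rightarrow 2\Rightarrow 3\Rightarrow 4$ completes the proof. \textbf{The main obstacle} I anticipate is the $3\Rightarrow 4$ direction — specifically, the careful verification that the construction sequence guaranteed by Theorem~\ref{thm:k4-minor-free} can always be \emph{re-organized} so that every degree-$2$ simplicial attachment is across an edge contained in at most one induced cycle. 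This requires an argument that, when growing a $2$-tree block out of a single pendant edge, one can always choose the order of triangle-additions so that the "base" edge of each new triangle currently lies in no induced $4^+$-cycle; the natural way is to process the $2$-tree in a "bottom-up" order given by its own recursive definition (delete simplicial degree-$2$ vertices one at a time, then re-add in reverse), observing that at the moment of re-adding, the two neighbors of the new vertex are joined by an edge lying in exactly the triangles already built and in no longer induced cycle, because any longer induced cycle would have to use a not-yet-added vertex. Making this last point precise — and handling the interaction with the exceptional hollowed-$2$-tree block so that its unique hole is never mistakenly used as a base — is where the write-up needs the most care.
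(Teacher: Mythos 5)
Your proposal is correct and takes essentially the same route as the paper: the equivalences $1\Leftrightarrow 2\Leftrightarrow 3$ and the implication $4\Rightarrow 1$ are imported directly from Theorem~\ref{thm:k4-minor-free}, and the new content is exactly your observation that every $(A_2)$-step in a construction of an outerplanar graph is automatically an $(A_2')$-step, because an attachment across an edge lying in two induced cycles would produce a $K_{2,3}$ (indeed $K_{2,3}^{+}$) minor and destroy outerplanarity, a property whose failure persists through the remaining steps. One remark: the re-ordering of the construction sequence that you flag as the main obstacle is a non-issue (no re-organization is needed); the only point genuinely requiring care, which your sketch glosses over, is the case analysis showing that two distinct induced cycles through a common edge $vw$ must intersect exactly in $\{v,w\}$ — if they intersect in more than that, or in a disconnected set, $K_{2,3}$ is already a minor before the new vertex is attached.
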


\begin{proof}
The equivalences $1\Leftrightarrow 2 \Leftrightarrow 3$ as well as the implication $4\Rightarrow 1$ follow from Theorem~\ref{thm:k4-minor-free}. From Theorem~\ref{thm:k4-minor-free} we also know that if one of conditions $1$, $2$, or $3$ holds, then $G$ can be constructed from either $K_1$ or a cycle by a sequence of operations $(A_1)$ and $(A_2)$.
Suppose that, when using the operation $(A_2)$ to add a simplicial vertex $u$ with neighbors $v$ and $w$, the edge $vw$ already lies in two (distinct) induced cycles,  say $C$ and $C'$. First, we claim that $C$ and $C'$ intersect in a path (which contains the edge $vw$). Suppose that this is not the case. Then the intersection of $C$ and $C'$ consist of at least two components, each of which is a path. Let $x$ be  an endvertex of one of these path components, and let $P$ be the path in $C'$ with $x$ as an endvertex, whose internal vertices and edges are not in $C$, and the other endvertex is $y\in V(C)\cap V(C')$. Now, it is easy to see that the subgraph induced by $V(C)\cup V(P)$ contains $K_{2,3}$ as a minor; this implies that the graph is not outerplanar, and since this property is preserved in further steps of the procedure, this contradicts the assumption that $G$ is outerplanar. Thus $C$ and $C'$ intersect in a path, which contains $vw$. If this path contains other vertices than $v$ and $w$, then one can again easily derive that $K_{2,3}$ appears as a minor, contradicting outerplanarity of $G$. Hence $C$ and $C'$ intersect exactly in the subgraph $K_2$ formed by $v$ and $w$. Then, after applying the operation $(A_2)$ of adding the vertex $u$ as the neighbor of $v$ and $w$, we infer that the sets $\{u\}$, $\{v\}$, $\{w\}$, $V(C) \setminus \{v,w\}$, and $V(C')\setminus \{v,w\}$ form an induced minor model of $K_{2,3}^{+}$. Hence, in this case the obtained graph would not be outerplanar, and it would remain non-outerplanar until the end of the procedure. Therefore, we deduce that in each step of the construction that uses $(A_2)$, in fact an operation is of the form $(A_2')$. This proves the implication $1\Rightarrow 4$.

\end{proof}

As a consequence of Proposition~\ref{prop:outerplanar} and Theorem~\ref{thm:outerplanar-characterization} we obtain the following result.

\begin{sloppypar}
\begin{corollary}
For every graph $G$, the following statements are equivalent:
\begin{enumerate}
  \item $G$ is $1$-perfectly orientable and outerplanar.
  \item $G$ is $\{K_4, K_{2,3}, K_{2,3}^{+}, F_1,F_2\}$-induced-minor-free.
\end{enumerate}
\end{corollary}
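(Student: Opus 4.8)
The plan is to combine Proposition~\ref{prop:outerplanar} with Theorem~\ref{thm:outerplanar-characterization}, after first reducing to the connected case. Since each of the graphs $K_4$, $K_{2,3}$, $K_{2,3}^{+}$, $F_1$, $F_2$ is connected, any induced minor model of one of them in $G$ has all of its branch sets contained in a single connected component of $G$ (a branch set induces a connected subgraph, so it lies in one component, and branch sets in distinct components induce a disconnected graph). Hence $G$ is $\{K_4, K_{2,3}, K_{2,3}^{+}, F_1, F_2\}$-induced-minor-free if and only if every component of $G$ is. The class of $1$-perfectly orientable graphs and the class of outerplanar graphs are likewise closed under taking components and disjoint unions, so it suffices to prove the equivalence for connected $G$.

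For connected $G$ I would argue as follows. By Proposition~\ref{prop:outerplanar}, $G$ is outerplanar if and only if $G$ is $\{K_4, K_{2,3}, K_{2,3}^{+}\}$-induced-minor-free. If statement~$1$ holds, then $G$ is in particular outerplanar, and so by Theorem~\ref{thm:outerplanar-characterization} (equivalence $1\Leftrightarrow 2$ there) $G$ is also $\{K_{2,3}, F_1, F_2\}$-induced-minor-free; combining this with $\{K_4, K_{2,3}, K_{2,3}^{+}\}$-induced-minor-freeness gives statement~$2$. Conversely, if statement~$2$ holds, then $G$ is $\{K_4, K_{2,3}, K_{2,3}^{+}\}$-induced-minor-free, hence outerplanar by Proposition~\ref{prop:outerplanar}; being additionally $\{K_{2,3}, F_1, F_2\}$-induced-minor-free, Theorem~\ref{thm:outerplanar-characterization} yields that $G$ is $1$-perfectly orientable, so statement~$1$ holds.

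There is essentially no obstacle here: this is a routine ``intersection of two characterizations'' argument. The only point that needs a moment's care is the reduction to the connected case, and it goes through precisely because every graph on the forbidden list is connected and because both conditions appearing in statement~$1$ are preserved under disjoint union. One should also note in passing that the list $\{K_4, K_{2,3}, K_{2,3}^{+}, F_1, F_2\}$ is exactly the union of the two forbidden families $\{K_4, K_{2,3}, K_{2,3}^{+}\}$ and $\{K_{2,3}, F_1, F_2\}$, so no redundancy or overlap causes any issue.
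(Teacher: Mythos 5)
Your proof is correct and follows essentially the same route as the paper: intersecting Proposition~\ref{prop:outerplanar} with Theorem~\ref{thm:outerplanar-characterization}. The only (harmless) differences are that you spell out the reduction to the connected case, which the paper leaves implicit even though Theorem~\ref{thm:outerplanar-characterization} is stated for connected graphs, and that in the forward direction you extract $\{K_{2,3},F_1,F_2\}$-induced-minor-freeness from that theorem rather than citing the general fact (Theorem~\ref{prop:1PO}) that $1$-perfectly orientable graphs exclude $F_1$ and $F_2$ as induced minors.
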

\end{sloppypar}

\begin{proof}
Let $G$ be a $1$-perfectly orientable outerplanar graph. Then $G$ is $\{K_4, K_{2,3}, K_{2,3}^{+}\}$-induced-minor-free since $G$ is outerplanar, and $\{F_1, F_2\}$-induced-minor-free since it is $1$-perfectly orientable. Conversely, if $G$ is $\{K_4, K_{2,3}, K_{2,3}^{+}, F_1,F_2\}$-induced-minor-free then $G$ is outerplanar by Proposition~\ref{prop:outerplanar}. By Theorem~\ref{thm:outerplanar-characterization}, $G$ is also $1$-perfectly orientable.
\end{proof}

\section{Concluding remarks}\label{sec:conclusion}

Since outerplanar and $K_4$-minor-free graphs are subclasses of the class of planar graphs (see, e.g.,~\cite{MR1686154}), it is 
a natural question whether the characterizations of $1$-perfectly orientable graphs within these two graph classes
given by Theorems~\ref{thm:k4-minor-free} and~\ref{thm:outerplanar-characterization} could be generalized to the class of planar graphs.
While no such characterizations are presently known, we observe below that known results on treewidth imply a partial result in this direction, namely that $1$-perfectly orientable planar graphs are of bounded treewidth. We remind the reader that every outerplanar graph is of treewitdh at most $2$ and, more generally, $K_4$-minor-free graphs are exactly the graphs of treewidth at most $2$.

A {\it $k\times k$} grid is the graph with vertex set $\{1,\ldots, k\}^2$ and edge set $\{\{(i,j),(i',j')\}: 1\le i,j,i',j'\le k, |i-i'|+|j-j'| = 1\}$.
One of the results from the graph minor project states that for every positive integer $k$ there is a positive integer $N$ such that if $G$ is a graph of treewidth at least $N$ then the $k\times k$ grid is a minor of $G$. This result was further strengthened for planar graphs in several ways.
For example, a result due to Gu and Tamaki~\cite{MR2965273} implies the following.

\begin{proposition}\label{prop:tw}
For every planar graph $G$, the treewidth of $G$ is at most $4.5k-1$ where $k$ is the largest integer such that $G$
contains a $k\times k$ grid as a minor.
\end{proposition}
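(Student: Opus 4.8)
The plan is to derive this from the ``grid minor'' machinery of the graph minors project as specialized to planar graphs. The key structural input is already cited in the text just before the statement: the result of Gu and Tamaki~\cite{MR2965273} on the so-called \emph{branchwidth/treewidth versus grid-minor} relationship for planar graphs. Concretely, their theorem says that a planar graph with no $(k+1)\times(k+1)$ grid minor has branchwidth at most $3k+1$ (equivalently, one uses a linear bound of this flavour). I would begin by stating precisely the Gu--Tamaki bound I intend to invoke, in whichever normalization (branchwidth or treewidth) is cleanest, and recall the standard sandwiching inequality relating branchwidth $\mathrm{bw}$ and treewidth $\mathrm{tw}$, namely $\mathrm{bw}(G) - 1 \le \mathrm{tw}(G) \le \tfrac{3}{2}\,\mathrm{bw}(G) - 1$ for any graph $G$ with at least one edge.

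The argument then is a short computation. Let $k$ be the largest integer such that $G$ contains a $k\times k$ grid as a minor; by definition $G$ contains no $(k+1)\times(k+1)$ grid minor. Applying the Gu--Tamaki bound to this ``grid-free'' situation gives an upper bound on $\mathrm{bw}(G)$ that is linear in $k$. Feeding that into the right-hand inequality $\mathrm{tw}(G) \le \tfrac{3}{2}\,\mathrm{bw}(G) - 1$ yields a bound of the form $\mathrm{tw}(G) \le c\,k + d$; one then checks that with the precise Gu--Tamaki constant the resulting bound is at most $4.5k - 1$. (If the cleanest available form of the Gu--Tamaki result is already phrased in terms of treewidth directly, one skips the branchwidth conversion and reads off the bound.) I would also dispose of the trivial edge cases: if $G$ has no edges then $\mathrm{tw}(G) = 0 \le 4.5k - 1$ holds vacuously since $k \ge 1$ (even $K_1$ contains a $1\times 1$ grid as a minor), so the statement is true for all planar $G$.

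The only real subtlety — and the step I would be most careful about — is bookkeeping with the competing conventions in the literature: grid sizes are sometimes indexed by side length and sometimes by number of ``cells''; branchwidth-to-treewidth conversions have two commonly quoted constants depending on whether one allows the $\pm 1$ additive terms; and the Gu--Tamaki statement itself appears in the literature in more than one normalization. The substantive claim of Proposition~\ref{prop:tw} is simply that after chasing through a consistent choice of these conventions the composite constant works out to the stated $4.5$ with the stated $-1$ additive term, so the ``proof'' is essentially: quote the external bound, apply the branchwidth--treewidth inequality, and verify the arithmetic $\tfrac{3}{2}\cdot 3 = 4.5$ lands the constant exactly. I would present it in two or three lines, citing~\cite{MR2965273} for the main inequality and a standard reference for the branchwidth--treewidth relation, and note that no planarity is used beyond what is needed to invoke Gu--Tamaki (in particular the general graph minors theorem only gives a bound that is superlinear in $k$, which is why the planar refinement is essential for the clean linear form).
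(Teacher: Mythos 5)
Your proposal follows exactly the paper's argument: invoke the Gu--Tamaki bound on the branchwidth of a planar graph in terms of its largest grid minor, then convert via the Robertson--Seymour inequality $\mathrm{tw}(G)\le \max\{1.5\,\mathrm{bw}(G)-1,\,1\}$, and check that $1.5\cdot 3=4.5$. The one caveat is precisely the normalization you flagged: the paper uses the form $\mathrm{bw}(G)\le 3k$ with $k$ the largest grid side, which yields exactly $4.5k-1$, whereas your tentative ``branchwidth at most $3k+1$'' version would only give $4.5k+0.5$, so the sharper form of the Gu--Tamaki bound is needed to land the stated constant.
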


\begin{proof}
Let $G$ be a planar graph, let $k$ be as above, and let $b$ and $t$ denote the treewidth and the branchwitdh of $G$, respectively.
Since $G$ is planar, a result by Gu and Tamaki~\cite{MR2965273} implies that $b\le 3k$.
Moreover, we have $t\le \max\{1.5b-1,1\}$ by a general result relating the treewidth and the branchwidth due to
Robertson and Seymour~\cite{MR1110468}. Consequently, $t\le \max\{4.5k-1,1\} = 4.5k-1$ since $k\ge 1$.
\end{proof}

\begin{corollary}\label{cor:grid}
For every $k>1$, the treewidth of every planar graph having no $k\times k$ grid as minor is at most $4.5(k-1)-1$.
\end{corollary}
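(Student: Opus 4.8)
The plan is to obtain this as an immediate specialization of Proposition~\ref{prop:tw}. First I would fix a planar graph $G$ containing no $k\times k$ grid as a minor, and let $k'$ denote the largest integer such that $G$ contains a $k'\times k'$ grid as a minor. This quantity is well defined: every graph with at least one vertex contains the $1\times 1$ grid $K_1$ as a minor, so $k'\ge 1$ (and if $G$ has no vertices, its treewidth is at most $0$ and the inequality holds trivially). By the hypothesis on $G$ we have $k'<k$, hence $k'\le k-1$.

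Next I would apply Proposition~\ref{prop:tw} to $G$, which gives that the treewidth of $G$ is at most $4.5k'-1$. Since the map $t\mapsto 4.5t-1$ is monotonically increasing and $k'\le k-1$, we conclude that the treewidth of $G$ is at most $4.5(k-1)-1$, as claimed.

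There is no real obstacle here: the corollary is just a reformulation of the proposition, phrased in terms of a forbidden grid size rather than the largest grid minor actually present. The only point requiring a word of care is the degenerate case of an empty or very small graph $G$, where the treewidth is at most $0$ and the bound $4.5(k-1)-1$ (positive since $k>1$) is satisfied vacuously; this is handled by the observation above. Hence the argument is a two-step substitution and monotonicity check.
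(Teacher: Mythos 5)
Your argument is correct and is exactly the intended derivation: the paper states this as an immediate corollary of Proposition~\ref{prop:tw}, obtained by noting that the largest grid minor has side length at most $k-1$ and invoking monotonicity of $t\mapsto 4.5t-1$. Your additional care about the degenerate cases ($k'\ge 1$, empty graph) is fine but not needed beyond what the proposition already assumes.
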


Next, observe that a minor model of a $6\times 6$ grid in a planar graph $G$ can be used to obtain an induced minor model of $F_1$ in $G$ (see Fig.~\ref{fig:F1-in-grid}).

\begin{figure}[h!]
\begin{center}
\includegraphics[height=40mm]{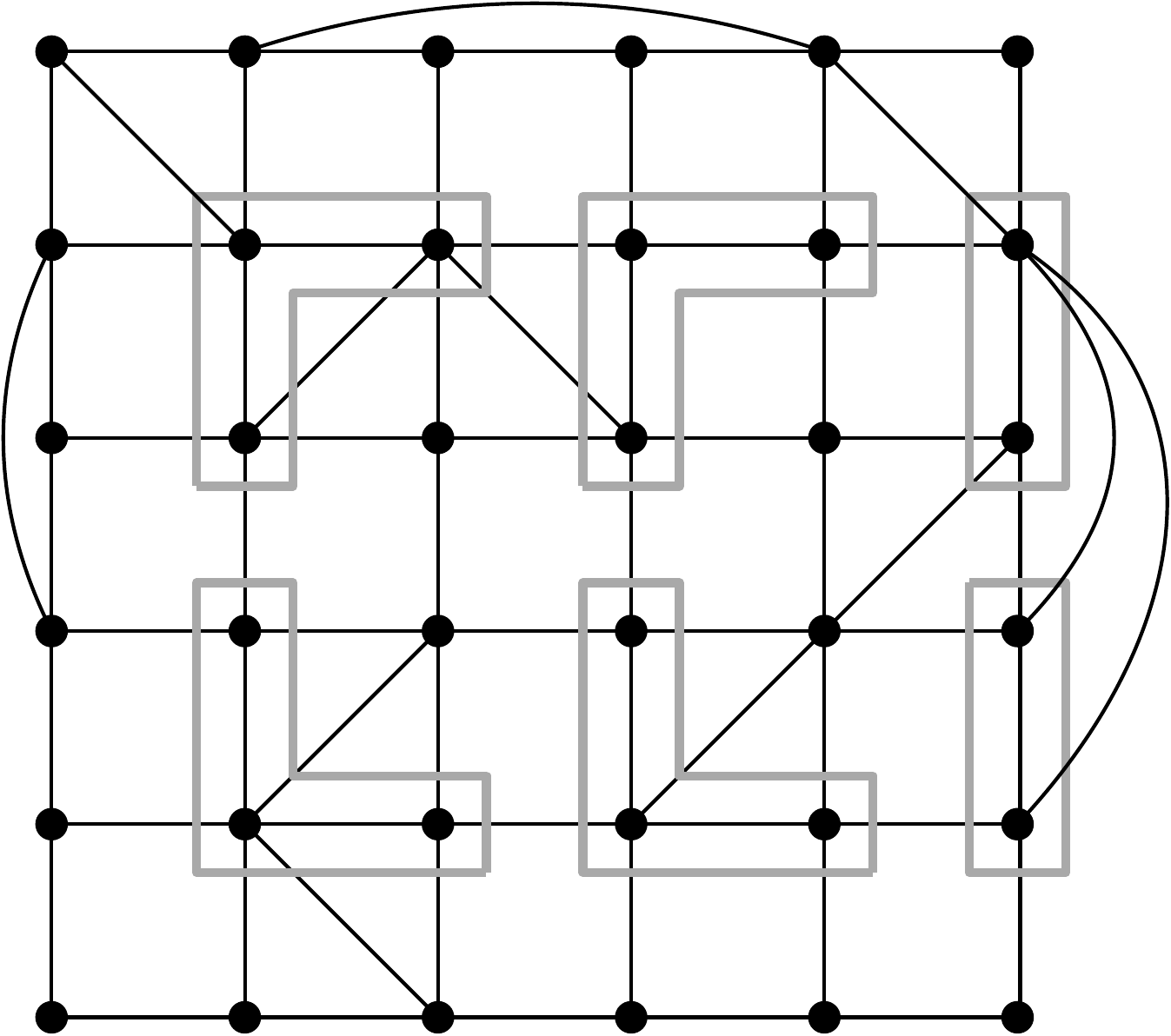}
\caption{Obtaining $F_1$ as induced minor in a planar graph having the $6\times 6$ grid as a minor.}\label{fig:F1-in-grid}
\end{center}
\end{figure}

Therefore, no $1$-perfectly orientable planar graph can have a $6\times 6$ grid as minor and Corollary~\ref{cor:grid} implies the following.

\begin{corollary}\label{cor:planar-1-po}
The treewidth of every $1$-perfectly orientable planar graph is at most $21$.
\end{corollary}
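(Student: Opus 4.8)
The plan is to combine two earlier ingredients: first, Corollary~\ref{cor:grid}, which bounds the treewidth of a planar graph in terms of the largest grid minor it contains, and second, the fact (from Theorem~\ref{prop:1PO}) that every $1$-perfectly orientable graph is $F_1$-induced-minor-free. The key observation bridging these is that a planar graph containing a $6\times 6$ grid as a \emph{minor} necessarily contains $F_1$ as an \emph{induced minor}. So the first step is to establish this observation carefully; the second step is then an immediate substitution into Corollary~\ref{cor:grid} with $k=6$, giving treewidth at most $4.5(6-1)-1 = 21.5$, which rounds down to $21$ since treewidth is an integer.

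The heart of the argument — and the step I expect to be the main obstacle — is verifying the claim that a minor model of the $6\times 6$ grid yields an induced minor model of $F_1$. The subtlety is that a minor model only guarantees edges where the grid has edges, but says nothing about the \emph{absence} of edges between branch sets corresponding to non-adjacent grid vertices. To fix this, I would first pass to a more convenient minor: observe that in a planar graph, a grid minor model can be chosen so that the branch sets, together with the "connecting" edges between them, form a subgraph that is a planar \emph{subdivision} of the grid possibly with some extra chords. The cleanest route is probably to argue that any planar graph with a $6\times 6$ grid minor contains, as a \emph{subgraph}, a subdivision of the $6\times 6$ grid (this holds because the grid has maximum degree $3$ after a small modification, or more directly by a standard argument about planar minors), and then to exhibit within this subdivision — which is an \emph{induced} subgraph of a carefully chosen smaller piece — a collection of six connected vertex sets realizing $F_1$ as an induced minor. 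Concretely, $F_1$ (a six-vertex graph, being one of the small forbidden configurations in Fig.~\ref{fig:1}) can be laid out inside the grid so that its branch sets are well-separated grid subpaths; the $6\times 6$ size is exactly what is needed to guarantee enough "room" that no two non-adjacent branch sets are joined by a grid edge. Figure~\ref{fig:F1-in-grid} is meant to display precisely such a layout, so in the write-up I would point to it and simply check that the depicted branch sets are pairwise disjoint, each induces a connected (path) subgraph of the grid, and that adjacency between branch sets matches the edge set of $F_1$ exactly — using planarity to rule out unwanted edges among the relevant subgrid.

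Assuming the observation, the proof is short. Since $1$-perfectly orientable graphs are closed under induced minors and $F_1$ is not $1$-perfectly orientable (Theorem~\ref{prop:1PO}), no $1$-perfectly orientable graph contains $F_1$ as an induced minor. By the observation, no $1$-perfectly orientable planar graph contains the $6\times 6$ grid as a minor. Applying Corollary~\ref{cor:grid} with $k=6$ then gives that the treewidth of such a graph is at most $4.5(6-1)-1 = 21.5$, hence at most $21$ as treewidth is integral. I would remark in passing that the constant $21$ is almost certainly far from optimal and that even determining whether $1$-perfectly orientable planar graphs have treewidth bounded by a small absolute constant (say, whether the $5\times 5$ grid already forces a forbidden induced minor, or whether triangulated portions of small grids suffice) is the kind of refinement left open; but for the stated corollary the above suffices.
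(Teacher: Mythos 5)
Your proposal follows essentially the same route as the paper: it deduces from Theorem~\ref{prop:1PO} that a $1$-perfectly orientable planar graph cannot have a $6\times 6$ grid minor (via the observation, illustrated in Fig.~\ref{fig:F1-in-grid}, that such a minor in a planar graph yields $F_1$ as an induced minor) and then applies Corollary~\ref{cor:grid} with $k=6$ to get treewidth at most $4.5\cdot 5-1=21.5$, hence $21$ by integrality. The only caveat is your parenthetical suggestion that the grid has maximum degree $3$ (its interior vertices have degree $4$, so the subdivision-equals-minor argument does not apply directly), but since you ultimately defer to checking the explicit branch sets in the figure --- exactly as the paper does --- this does not affect correctness.
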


More generally, for every positive integer $r$, the treewidth is bounded in the class of $1$-perfectly orientable $K_r$-minor-free graphs.
This follows from the analogous statement in the more general setting, for $K_r$-minor-free graphs excluding any fixed planar graph as induced minor,
which can be proved using arguments as in Case 2 of the proof of \cite[Theorem 9]{MR2901091}. (Theorem 9 from~\cite{MR2901091} was
derived from results due to Fellows et al.~\cite{MR1308575} and Fomin et al.~\cite{MR2557796}.)
This observation has the following algorithmic consequence: since the defining property of $1$-perfectly orientable graphs can be expressed in Monadic Second Order Logic with quantifiers over edges and edge subsets, Courcelle's Theorem~\cite{MR1042649} implies that  $1$-perfectly orientable graphs can be recognized in linear time in any class of graphs of bounded treewidth. In particular, by the above observation, this is the case for any class of graphs excluding some complete graph as a minor.

We conclude with a question 
that could lead to further insights on the structure of $1$-perfectly orientable graphs;
a positive answer would generalize Corollary~\ref{cor:planar-1-po}.

\begin{problem}
Is it true that for every positive integer $k$ there is a positive integer $N$ such that
every $1$-perfectly orientable graph with clique number $k$ is of treewidth at most $N$?
\end{problem}

\section*{Acknowledgements}

We are grateful to Marcin J.~Kami\'nski for pointing us to the paper~\cite{MR2901091}.
This work was supported in part by the Slovenian Research Agency (I$0$-$0035$, research programs P$1$-$0285$ and P$1$-$0297$, research projects N$1$-$0032$, N$1$-$0043$, J$1$-$5433$, J$1$-$6720$, J$1$-$6743$, J$1$-$7051$, and J$1$-$7110$, and two Young Researchers Grants.)

\bibliographystyle{abbrv}

\begin{sloppypar}
\bibliography{1-perfectly-orientable-bib}{}

\def\soft#1{\leavevmode\setbox0=\hbox{h}\dimen7=\ht0\advance \dimen7
  by-1ex\relax\if t#1\relax\rlap{\raise.6\dimen7
  \hbox{\kern.3ex\char'47}}#1\relax\else\if T#1\relax
  \rlap{\raise.5\dimen7\hbox{\kern1.3ex\char'47}}#1\relax \else\if
  d#1\relax\rlap{\raise.5\dimen7\hbox{\kern.9ex \char'47}}#1\relax\else\if
  D#1\relax\rlap{\raise.5\dimen7 \hbox{\kern1.4ex\char'47}}#1\relax\else\if
  l#1\relax \rlap{\raise.5\dimen7\hbox{\kern.4ex\char'47}}#1\relax \else\if
  L#1\relax\rlap{\raise.5\dimen7\hbox{\kern.7ex
  \char'47}}#1\relax\else\message{accent \string\soft \space #1 not
  defined!}#1\relax\fi\fi\fi\fi\fi\fi} \def\cprime{$'$}
\begin{thebibliography}{10}

\bibitem{MR1244934}
J.~Bang-Jensen, J.~Huang, and E.~Prisner.
\newblock In-tournament digraphs.
\newblock {\em J. Combin. Theory Ser. B}, 59(2):267--287, 1993.

\bibitem{MR2241966}
M.~Barot, C.~Geiss, and A.~Zelevinsky.
\newblock Cluster algebras of finite type and positive symmetrizable matrices.
\newblock {\em J. London Math. Soc. (2)}, 73(3):545--564, 2006.

\bibitem{MR1686154}
A.~Brandst{\"a}dt, V.~B. Le, and J.~P. Spinrad.
\newblock {\em Graph classes: a survey}.
\newblock SIAM Monographs on Discrete Mathematics and Applications. Society for
  Industrial and Applied Mathematics (SIAM), Philadelphia, PA, 1999.

\bibitem{Chartrand1967}
G.~Chartrand and F.~Harary.
\newblock Planar permutation graphs.
\newblock {\em Annales de l'I.H.P. Probabilités et statistiques},
  3(4):433--438, 1967.

\bibitem{MR2901082}
F.~Cicalese and M.~Milani{\v{c}}.
\newblock Graphs of separability at most 2.
\newblock {\em Discrete Appl. Math.}, 160(6):685--696, 2012.

\bibitem{MR1042649}
B.~Courcelle.
\newblock The monadic second-order logic of graphs. {I}. {R}ecognizable sets of
  finite graphs.
\newblock {\em Inform. and Comput.}, 85(1):12--75, 1990.

\bibitem{MR2744811}
R.~Diestel.
\newblock {\em Graph theory}, volume 173 of {\em Graduate Texts in
  Mathematics}.
\newblock Springer, Heidelberg, fourth edition, 2010.

\bibitem{MR0130190}
G.~A. Dirac.
\newblock On rigid circuit graphs.
\newblock {\em Abh. Math. Sem. Univ. Hamburg}, 25:71--76, 1961.

\bibitem{MR1308575}
M.~R. Fellows, J.~Kratochv{\'{\i}}l, M.~Middendorf, and F.~Pfeiffer.
\newblock The complexity of induced minors and related problems.
\newblock {\em Algorithmica}, 13(3):266--282, 1995.

\bibitem{MR2557796}
F.~V. Fomin, P.~Golovach, and D.~M. Thilikos.
\newblock Contraction bidimensionality: the accurate picture.
\newblock In {\em Algorithms---{ESA} 2009}, volume 5757 of {\em Lecture Notes
  in Comput. Sci.}, pages 706--717. Springer, Berlin, 2009.

\bibitem{MR0186421}
D.~R. Fulkerson and O.~A. Gross.
\newblock Incidence matrices and interval graphs.
\newblock {\em Pacific J. Math.}, 15:835--855, 1965.

\bibitem{MR1246675}
H.~Galeana-S{\'a}nchez.
\newblock Normal fraternally orientable graphs satisfy the strong perfect graph
  conjecture.
\newblock {\em Discrete Math.}, 122(1-3):167--177, 1993.

\bibitem{MR1449722}
H.~Galeana-S{\'a}nchez.
\newblock A characterization of normal fraternally orientable perfect graphs.
\newblock {\em Discrete Math.}, 169(1-3):221--225, 1997.

\bibitem{MR1292980}
F.~Gavril.
\newblock Intersection graphs of proper subtrees of unicyclic graphs.
\newblock {\em J. Graph Theory}, 18(6):615--627, 1994.

\bibitem{MR1287025}
F.~Gavril and J.~Urrutia.
\newblock Intersection graphs of concatenable subtrees of graphs.
\newblock {\em Discrete Appl. Math.}, 52(2):195--209, 1994.

\bibitem{MR2965273}
Q.-P. Gu and H.~Tamaki.
\newblock Improved bounds on the planar branchwidth with respect to the largest
  grid minor size.
\newblock {\em Algorithmica}, 64(3):416--453, 2012.

\bibitem{MR2370526}
V.~Gurvich.
\newblock On cyclically orientable graphs.
\newblock {\em Discrete Math.}, 308(1):129--135, 2008.

\bibitem{2014arXiv1411.6663R}
T.~R. {Hartinger} and M.~{Milani{\v c}}.
\newblock {Partial characterizations of $1$-perfectly orientable graphs}.
\newblock Submitted. Preprint available as arXiv:1411.6663 [math.CO].

\bibitem{MR1081957}
P.~Hell, J.~Bang-Jensen, and J.~Huang.
\newblock Local tournaments and proper circular arc graphs.
\newblock In {\em Algorithms ({T}okyo, 1990)}, volume 450 of {\em Lecture Notes
  in Comput. Sci.}, pages 101--108. Springer, Berlin, 1990.

\bibitem{MR3152051}
F.~Kammer and T.~Tholey.
\newblock Approximation algorithms for intersection graphs.
\newblock {\em Algorithmica}, 68(2):312--336, 2014.

\bibitem{MR2323998}
J.~Ne{\v{s}}et{\v{r}}il and P.~Ossona~de Mendez.
\newblock Fraternal augmentations of graphs, coloration and minors.
\newblock In {\em 6th {C}zech-{S}lovak {I}nternational {S}ymposium on
  {C}ombinatorics, {G}raph {T}heory, {A}lgorithms and {A}pplications},
  volume~28 of {\em Electron. Notes Discrete Math.}, pages 223--230.

\bibitem{MR2548660}
J.~Ne{\v{s}}et{\v{r}}il and P.~Ossona~de Mendez.
\newblock Fraternal augmentations, arrangeability and linear {R}amsey numbers.
\newblock {\em European J. Combin.}, 30(7):1696--1703, 2009.

\bibitem{prisner1988familien}
E.~Prisner.
\newblock {\em Familien zusammenh{\"a}ngender Teilgraphen eines Graphen und
  ihre Durchschnittsgraphen}.
\newblock 1988.

\bibitem{MR1110468}
N.~Robertson and P.~D. Seymour.
\newblock Graph minors. {X}. {O}bstructions to tree-decomposition.
\newblock {\em J. Combin. Theory Ser. B}, 52(2):153--190, 1991.

\bibitem{MR666799}
D.~J. Skrien.
\newblock A relationship between triangulated graphs, comparability graphs,
  proper interval graphs, proper circular-arc graphs, and nested interval
  graphs.
\newblock {\em J. Graph Theory}, 6(3):309--316, 1982.

\bibitem{Speyer}
D.~E. Speyer.
\newblock Cyclically orientable graphs.
\newblock preprint, arXiv:math/0511233 [math.CO], November 2005.

\bibitem{MR1161986}
J.~Urrutia and F.~Gavril.
\newblock An algorithm for fraternal orientation of graphs.
\newblock {\em Inform. Process. Lett.}, 41(5):271--274, 1992.

\bibitem{MR2901091}
P.~van~'t Hof, M.~Kami{\'n}ski, D.~Paulusma, S.~Szeider, and D.~M. Thilikos.
\newblock On graph contractions and induced minors.
\newblock {\em Discrete Appl. Math.}, 160(6):799--809, 2012.

\bibitem{opac-b1096791}
D.~B. West.
\newblock {\em Introduction to {G}raph {T}heory}.
\newblock Prentice Hall, Upper Saddle River (N. J.), 2001.

\bibitem{MR2643278}
T.~Zou.
\newblock Cyclical orientations of graphs.
\newblock {\em Sichuan Daxue Xuebao}, 47(1):21--26, 2010.

\end{thebibliography}
\end{sloppypar}

\end{document}